\newtheorem {theorem}    {Theorem}[section]
\newtheorem {lemma}      [theorem]    {Lemma}
\newtheorem {corollary}  [theorem]    {Corollary}
\newtheorem {proposition}[theorem]    {Proposition}
\newcommand{\bb}{\mathbb}
\renewcommand{\rm}{\mathrm}
\newcommand{\cal}{\mathcal}
\newcommand{\GG}{\mathrm{G}}
\newcommand{\GGL}{\mathrm{GL}}
\newcommand{\UU}{\mathrm{U}}
\renewcommand{\sp}{\mathrm{Sp}}
\renewcommand{\o}{\mathrm{O}}
\newcommand{\so}{\mathrm{SO}}
\newcommand{\Fq}{\bb{F}_q}
\newcommand{\fq}{(\bb{F}_q)}
\theoremstyle{definition}
\newcommand{\E}{\mathcal{E}}
\newcommand{\CD}{{\mathcal{D}}}
\newcommand{\CQ}{{\mathcal{Q}}}
\newcommand{\CJ}{{\mathcal {J}}}
\newcommand{\UUl}{\underline}
\newcommand{\e}{{\epsilon'}}
\numberwithin{equation}{section}
\begin{document}

\title{Descents of unipotent cuspidal representations of finite classical  groups}

\date{\today}

\author[Dongwen Liu*]{Dongwen Liu*}

\address{School of Mathematical Science, Zhejiang University, Hangzhou 310027, Zhejiang, P.R. China}

\email{maliu@zju.edu.cn}

\author[Zhicheng Wang]{Zhicheng Wang}

\address{School of Mathematical Science, Zhejiang University, Hangzhou 310027, Zhejiang, P.R. China}

\email{11735009@zju.edu.cn}
\subjclass[2010]{Primary 20C33; Secondary 22E50}
\begin{abstract}
Inspired by the Gan-Gross-Prasad conjecture and the descent problem for classical groups, in this paper we study the descents of unipotent cuspidal representations of orthogonal and symplectic groups over finite fields.
\end{abstract}

\maketitle

\section{Introduction}

\subsection{Motivation}

In representation theory, a classical problem is to look for the spectral decomposition
of a representation $\pi$ of a group $G$ restricted to a subgroup $H$. Namely, one asks for which representation $\sigma$ of $H$ has the property that
\[
\rm{Hom}_H(\pi,\sigma)\neq 0,
\]
and what the dimension of this Hom-space is. In general such a restriction problem is hard and may not have reasonable answers. However when $G$ is
a classical group defined over a local field and $\pi$ belongs to a generic Vogan $L$-packet, the local Gan-Gross-Prasad conjecture \cite{GP1, GP2, GGP1} provides explicit answers and is one of the most successful examples concerning with those general questions. To be a little more precise, the multiplicity one property holds for this situation, namely
\[
m(\pi,\sigma):=\dim\rm{Hom}_H(\pi,\sigma)\leq 1,
\]
and the invariants attached to $\pi$ and $\sigma$ that detect the multiplicity $m(\pi,\sigma)$ is the local root number associated to their Langlands parameters.  In the $p$-adic case, the local Gan-Gross-Prasad conjecture  has been resolved by J.-L. Waldspurger and C. M\oe glin and J.-L. Waldspurger \cite{W1, W2, W3, MW} for orthogonal groups, by R. Beuzart-Plessis \cite{BP1, BP2} and W. T. Gan and A. Ichino \cite{GI} for unitary groups, and by H. Atobe \cite{Ato} for symplectic-metaplectic groups. On the other hand, D. Jiang and L. Zhang \cite{JZ1} study the local descents for $p$-adic orthogonal groups, whose results can be viewed as a refinement of the local Gan-Gross-Prasad conjecture, and the descent method has important applications towards the global problem (see \cite{JZ2}).

 In a previous work \cite{LW2}, we have studied the descents of unipotent representations of finite unitary groups, applying Reeder's branching formula \cite{R}. The aim of this paper is to study the descent problem for unipotent cuspidal representations of finite symplectic groups and orthogonal  groups, and our main tool is the theta correspondence over finite fields. In a recent paper \cite{P2}, Pan determines  the theta correspondence between finite symplectic and even orthogonal groups. A complete understanding of the theta correspondence should extend our results to more general representations.

 To begin with, we first set up some notations. Let $\overline{\mathbb{F}}_q$ be an algebraic closure of a finite field $\mathbb{F}_q$, which is of characteristic $p>2$. Consider a connected reductive algebraic group $G$ defined over $\Fq$, with Frobenius map $F$. Let $Z$ be the center of $G^F$. We will assume that $q$ is large enough such that the main theorem in \cite{S} holds, namely assume that
 \begin{itemize}
 \item $T^F/Z$ has at least two Weyl group orbits of regular characters, for every $F$-stable maximal torus $T$ of $G$.
 \end{itemize}
For an $F$-stable maximal torus $T$ of $G$ and a character $\theta$ of $T^F$,  let $R_{T,\theta}^G$ be the virtual character of $G^F$ defined by P. Deligne and G. Lusztig in  \cite{DL}. An irreducible representation $\pi$ of $G^F$ is called unipotent if there is an $F$-stable maximal torus $T$ of $G$ such that $\pi$ appears in $R_{T,1}^G$. For two representations $\pi$ and $\pi'$ of a  finite group $H$, define
\[
\langle \pi,\pi'\rangle_H:=\dim \mathrm{Hom}_H(\pi,\pi').
\]

In this paper, we focus on orthogonal and symplectic groups over finite fields. Let $V$ be an $\Fq$-vector space endowed with a nondegenerate bilinear form $(,)$ with sign $\epsilon$, i.e. $(v, w)=\epsilon (w,v)$ for any $v, w\in V$. Moreover, suppose that $W\subset V$ is a non-degenerate subspace satisfying:
\begin{itemize}

\item $\epsilon \cdot (-1)^{\rm{dim} W^\bot} = -1$,

\item $W^\bot$ is a split space.
\end{itemize}

Then we have
\[
\dim W^\bot=\left\{
\begin{array}{ll}
 \textrm{odd,} & \textrm{if }\epsilon=1,\textrm{ i.e. } V\textrm{ is orthogonal;}\\
\textrm{even,}  & \textrm{if }\epsilon=-1,\textrm{ i.e. } V\textrm{ is symplectic.}
\end{array}\right.
\]
Let $G(V )$ be the identity component of the automorphism group of $V$ and $G(W )\subset G(V )$ the subgroup which acts as identity on $W^\bot$. Let $\pi$ and $\pi'$ be  irreducible representations of $G(V )$ and $G(W )$ respectively.
The Gan-Gross-Prasad conjecture is concerned with the multiplicity
\[
m(\pi, \pi'):=\langle \pi\otimes\bar{\nu},\pi' \rangle_{H(\Fq)} = \dim \mathrm{Hom}_{H(\Fq)}(\pi\otimes\bar{\nu},\pi' )
\]
where the datum $(H, \nu)$ is defined as in \cite[Theorem 15.1]{GGP1}, and will be explained in details shortly. According to whether $\rm{dim}V-\rm{dim}W$ is odd or even, the above-Hom space is called the Bessel model or Fourier-Jacobi model. In the case of finite unitary groups, W. T. Gan, B. H. Gross and D. Prasad (\cite[Proposition 5.3]{GGP2}) showed  that if  $\pi$ and $\pi'$ are both cuspidal, then
\[
m(\pi,\pi') \le 1.
\]
We should mention that our formulation of multiplicities differs slightly from that in the Gan-Gross-Prasad conjecture \cite{GGP1}, up to taking the contragradient of $\pi'$. This is more suitable for the purpose of descents (c.f. \cite{LW2}), which will be clear from the discussion below. 

Roughly speaking,  for fixed $G(V)$ and its representation $\pi$,  the descent problem seeks the smallest member $G(W)$ among a Witt tower which has an irreducible representation $\pi'$ satisfying $m(\pi, \pi')\neq 0$, and all such $\pi'$ give the first descent of $\pi$. 
To give the precise notion of descent, we will sketch  the definition of the data $(H,\nu)$ following \cite{GGP1} and \cite{JZ1}.

\subsection{Bessel descent}

We first consider the Bessel case. Let $V_n$ be an $n$-dimensional space over $\mathbb{F}_{q}$ with a nondegenerate symmetric bilinear form $(,)$, which defines the special orthogonal group $\so(V_n)$. We will consider various pairs of symmetric spaces $V_n\supset V_{n-2\ell}$ and the following partitions of $n$,
\begin{equation}\label{pl}
\UUl{p}_\ell=[2\ell+1, 1^{n-2\ell-1}],\quad 0\leq \ell\leq n/2.
\end{equation}
Assume that  $V_n$ has a decomposition
\[
V_n=X+V_{n-2\ell}+X^\vee
\]
where $X+X^\vee=V_{n-2\ell}^\perp$ is a polarization. Let $\{e_1,\ldots, e_\ell\}$ be a basis of $X$, $\{e_1',\ldots, e_\ell'\}$ be the dual basis of $X^\vee$, and
let $X_i=\rm{Span}_{\mathbb{F}_{q}}\{e_1,\ldots, e_i\}$, $i=1,\ldots, \ell$. Let $P$ be the parabolic subgroup of $\so(V_n)$ stabilizing the flag
\[
X_1\subset\cdots\subset X_\ell,
\]
so that its Levi component is $M\cong \GGL_1^\ell\times \so(V_{n-2\ell})$. Its unipotent radical can be written in the form
\[
N_{\UUl{p}_\ell}=\left\{n=
\begin{pmatrix}
z & y & x\\
0 & I_{n-2\ell} & y'\\
0 & 0 & z^*
\end{pmatrix}
: z\in U_{\GGL_\ell}
\right\},
\]
where the superscript ${}^*$ denotes the  transpose inverse, and $U_{\GGL_\ell}$ is the subgroup of unipotent upper triangular matrices of $\GGL_\ell$.
Fix a nontrivial additive character $\psi$ of $\mathbb{F}_{q}$. Pick up an anisotropic vector $v_0\in V_{n-2\ell}$ and define a generic character $\psi_{\UUl{p}_\ell, v_0}$ of $N_{\UUl{p}_\ell}(\Fq)$ by
\[
\psi_{\UUl{p}_\ell, v_0}(n)=\psi\left(\sum^{\ell-1}_{i=1}z_{i,i+1}+(y_\ell, v_0)\right), \quad n\in N_{\UUl{p}_\ell}(\Fq),
\]
where $y_\ell$ is the last row of $y$. The identity component of the stabilizer of $\psi_{\UUl{p}_\ell, v_0}$ in $M(\Fq)$ is the special orthogonal group $\rm{SO}(W)$, where $W$ is the orthogonal complement of $v_0$ in $V_{n-2\ell}$.
Put
\begin{equation}\label{hnu}
H=\so(W)\ltimes N_{\UUl{p}_\ell},\quad \nu=\psi_{\UUl{p}_\ell, v_0}.
\end{equation}
Let $\pi$ and $\pi'$ be two irreducible cuspidal representations of $\so(V_{n})$ and $\so(W)$ respectively. Since depth-zero supercuspial representations of $p$-adic Lie groups are compactly induced from cuspidal representations of finite Lie groups,
the uniqueness of
{\sl Bessel models} in the $p$-adic case proved in \cite{AGRS} readily implies  that
\[
m(\pi,\pi'):=\dim_{H(\Fq)}(\pi\otimes \bar{\nu}, \pi')\leq 1.
\]

Note that $m(\pi,\pi')$ depends on the choice of $v_0$. Let $Q$ be the quadratic form associated to $(,)$.
Pick up two anisotropic vectors $v_0$, $v_0'\in V_{n-2\ell}$ such that $Q(v_0)/Q(v_0')$ is a non-square in $\Fq$. The identity component of the stabilizer of $\psi_{\UUl{p}_\ell, v_0'}$ in $M(\Fq)$ is the special orthogonal group $\so(W')$ of the orthogonal complement $W'$ of $v_0'$ in $V_{n-2\ell}$. If $n-2\ell$ is even, then $\so(W)\cong \so(W')$, but the groups $\so(W)$, $\so(W')$ are not conjugate in $\so(V_{n-2\ell})$.
If $n-2\ell$ is odd, then there are two choices of anisotropic vectors $v_0$, $v_0'\in V_{n-2\ell}$ such that $W$ is split but $W'$ not. Thus we get $\so(W)\ncong \so(W')$ in this case. In general, we have
\begin{equation}\label{disc}
\rm{disc} \ V= (-1)^{n-1} \cdot Q(v_0) \cdot \rm{disc} \ W,
\end{equation}
where both sides are regarded as square classes in $\Fq^\times/ (\Fq^\times)^2\cong\{\pm1\}$. Here the discriminant is normalized by
\[
\rm{disc} \ V=(-1)^{n(n-1)/2}\det V \in \Fq^\times/ (\Fq^\times)^2,
\]
such that when $\dim V$ is even, $\rm{disc} \ V=+1$ if and only if $\rm{SO}(V)$ is split.


Let $\CJ_{\ell,{v_0}}(\pi)$ be the twisted Jacquet module of $\pi$ with respect to $(N_{\UUl{p}_{\ell}}(\Fq), \psi_{\UUl{p}_{{\ell}}, v_0})$.
We simply define the notion of the ${\ell}$-th {\sl Bessel quotient} of $\pi$ with respect to (the rational orbit of) $v_0$ by
\begin{equation}\label{lbd}
\CQ^\rm{B}_{{\ell},{v_0}}(\pi):=\CJ_{{\ell},{v_0}}(\pi),
\end{equation}
which is a representation of $\so(W)$. Define the {\sl first occurrence index} ${\ell}_0:={\ell}_0^\rm{B}(\pi)$ of $\pi$ in the Bessel case to be the largest nonnegative integer ${\ell}_0 \leq n/2$ such that $\CQ^\rm{B}_{{\ell}_0,{v_0}}(\pi)\neq 0$ for some anisotropic vector $v_0\in V_{n-2{\ell}_0}$. The ${\ell}_0$-th Bessel descent of $\pi$ with respect to this choice of $v_0$ is called the {\sl first Bessel descent} of $\pi$ or simply the {\sl Bessel descent} of $\pi$, denoted by
\begin{equation}\label{bd}
\CD^\rm{B}_{{\ell}_0, v_0}(\pi):=\CQ^\rm{B}_{{\ell}_0,{v_0}}(\pi).
\end{equation}

The above discussions are valid for full orthogonal groups as well. For an irreducible representation $\pi$ of $\o(V_n)$, we have the $\ell$-th Bessel descent $\CQ^\rm{B}_{\ell, v_0}(\pi)$ as a representation of $\o(W)$, and we also have the notions of the first occurrence index $\ell_0^\rm{B}(\pi)$ and the first Bessel descent $\CD^\rm{B}_{\ell_0, v_0}(\pi)$.

\subsection{Fourier-Jacobi descent}

We next turn to the Fourier-Jacobi case. Let $W_{2n}$ be a symplectic space of dimension $2n$ over $\Fq$, which gives the symplectic group $\sp_{2n}(\Fq)$. Consider pairs of symplectic spaces $W_{2n}\supset W_{2n-2\ell}$ and partitions
\begin{equation}\label{pl'}
\UUl{p}'_\ell=[2\ell, 1^{2n-2\ell}],\quad 0\leq \ell\leq n.
\end{equation}
We use similar notations for various subspaces and subgroups as in the Bessel case. Note that if we let $P_\ell$ be the parabolic subgroup of $\rm{Sp}_{2n}$ stabilizing $X_\ell$ and let $N_\ell$ be its unipotent radical, then $N_{\UUl{p}_\ell}=U_{\GGL_\ell}\ltimes N_\ell$. Let $\omega_\psi$ be the Weil representation (see \cite{Ger}) of $\sp_{2(n-\ell)}(\Fq)\ltimes \mathcal{H}_{2n-2\ell}$ depending on $\psi$, where $\mathcal{H}_{2n-2\ell}$ is the Heisenberg group of $W_{2n-2\ell}$. Roughly speaking, there is a natural homomorphism $N_\ell(\Fq)\to \mathcal{H}_{2n-2\ell}$ invariant under the conjugation action of $U_{\GGL_\ell}(\Fq)$ on $N_\ell(\Fq)$, which enables us to view $\omega_\psi$ as a representation of $\sp_{2(n-\ell)}(\Fq)\ltimes N_{\UUl{p}_\ell}(\Fq)$. Let $\psi_\ell$ be the character of $U_{\GGL_\ell}(\Fq)$ given by
\[
\psi_\ell(z)=\psi\left(\sum^{\ell-1}_{i=1}z_{i,i+1}\right),\quad z\in U_{\GGL_\ell}(\Fq).
\]
For the Fourier-Jacobi case, put
\begin{equation}\label{hnu'}
H=\sp_{2(n-\ell)}\ltimes N_{\UUl{p}_\ell},\quad \nu=\omega_\psi\otimes\psi_\ell.
\end{equation}
Similar to the Bessel case, for irreducible cuspidal representations $\pi$ and $\pi'$ of $\sp_{2n}(\Fq)$ and $\sp_{2(n-\ell)}(\Fq)$ respectively, the uniqueness of {\sl Fourier-Jacobi models}  in the $p$-adic case proven  in \cite{Su} implies that
 \[
m_\psi(\pi,\pi'):=\rm{Hom}_{H(\Fq)}(\pi\otimes \bar{\nu}, \pi')\leq 1.
 \]

Note that $m_\psi(\pi, \pi')$ depends on $\psi$, and that
\[
\rm{Hom}_{H(\Fq)}(\pi\otimes \bar{\nu}, \pi')\cong \rm{Hom}_{\rm{Sp}_{2n-2\ell}(\Fq)}(\CJ'_\ell(\pi\otimes\overline{\omega_\psi}), \pi'),
\]
where $\CJ'_\ell(\pi\otimes\overline{\omega_\psi})$ is the twisted Jacquet module of $\pi\otimes\overline{\omega_\psi}$ with respect to $(N_{\UUl{p}_\ell}(\Fq), \psi_\ell)$. Define the $\ell$-th {\sl Fourier-Jacobi quotient} of $\pi$ with respect to $\psi$ to be
\begin{equation}\label{lfd}
\CQ_{\ell,\psi}^\rm{FJ}(\pi):=\CJ'_\ell(\pi\otimes\overline{\omega_\psi}),
\end{equation}
viewed as a representation of $\sp_{2(n-\ell)}(\Fq)$. Define the {\sl first occurrence index} $\ell_0:=\ell_0^\rm{FJ}(\pi)$ of $\pi$ in the Fourier-Jacobi case to be the largest nonnegative integer $\ell_0\leq n/2$ such that $\CQ^\rm{FJ}_{\ell_0,\psi}(\pi)\neq 0$ for some choice of $\psi$. The $\ell_0$-th Fourier-Jacobi descent of $\pi$ with respect to this chosen $\psi$ is called the {\sl first Fourier-Jacobi descent} of $\pi$ or simply the {\sl Fourier-Jacobi descent} of $\pi$, denoted by
\begin{equation}\label{bd}
\CD^\rm{FJ}_{\ell_0, \psi}(\pi):=\CQ^\rm{FJ}_{\ell_0,\psi}(\pi).
\end{equation}

\subsection{Cuspidal unipotent and $\theta$-representations}

We now fix some  notations and  describe the cuspidal representations considered in this paper. As is standard, denote by $\so_n^\epsilon$ and $\o_n^\epsilon$, $\epsilon=\pm$,  the (special) orthogonal groups of an $n$-dimensional  quadratic space with discriminant $\epsilon \ 1\in \mathbb{F}_q^\times /(\mathbb{F}_q^\times)^2$.   For convenience, by abuse of notation we also write $\epsilon=\epsilon \ 1$ for the sign of the corresponding discriminant. Denote by $\epsilon_a$, $a\in \mathbb{F}_q^\times$ the square class of $a$, so that $\epsilon_{-1}$ is involved in \eqref{disc}. Put $\epsilon(k)=(-1)^k$ for an integer $k$.

According to Lusztig's results \cite{L1}, let $\pi_{\sp_{2k(k+1)}}$, $\pi_{\rm{SO}^\epsilon_{2k(k+1)+1}}$, $\epsilon=\pm$, and $\pi_{\rm{SO}^{\epsilon(k)}_{2k^2}}$ be the unique irreducible unipotent cuspidal representations of the corresponding groups. The irreducible unipotent cuspidal representations of $\o_n^\epsilon$, where $\so_n^\epsilon$ is one of the above special orthogonal  groups, are the two irreducible components of 
\begin{equation}\label{un}
\rm{Ind}^{\o^\epsilon_n}_{\rm{SO}^\epsilon_n}\pi_{\so_n^\epsilon}=\pi^+_{\o_n^\epsilon} \oplus \pi^-_{\o_n^\epsilon}.
\end{equation}
Note that $\pi^+_{\o_n^\epsilon}$ and $\pi^-_{\o_n^\epsilon}$ differ by the sign character of $\o_n^\epsilon$, and that 
\begin{equation}\label{res}
\pi^\pm_{\o_n^\epsilon}\vert_{\so_n^\epsilon}\cong  \pi_{\so_n^\epsilon}.
\end{equation} We distinguish them by decreeing that
\begin{itemize}
\item when $n=2k(2k+1)$ is odd, $\pi^\pm_{\o_n^\epsilon}(-1)=\pm \ \textrm{Id}$;
\item $\pi^{+}_{\o_2^-}= \textrm{triv}$, $\pi^-_{\o_2^-}=\textrm{sgn}$, and $\pi^{\pm}_{\o_{2k^2}^{\epsilon(k)}}$, $k\geq 2$ are determined by the chain of conservation relations as in \cite{AM}. See Theorem \ref{even} for details. 
\end{itemize}
In a previous work \cite{LW1}, we introduced a notion of $\theta$-representations (which are called pseudo-unipotent representations in \cite{P2}) in order to study the theta correspondence between finite symplectic and odd orthogonal groups.  Based on Lusztig's results, in \cite{LW1} we proved that  
$\rm{Sp}_{2n}$, $n=k^2$ are the only symplectic groups which possess cuspidal $\theta$-representations, and each $\rm{Sp}_{2k^2}$ has two irreducible cuspidal $\theta$-representations $\pi^\theta_{k,\alpha}$ and $\pi^\theta_{k,\beta}$, which satisfy $\pi^\theta_{k,i}(-1)=(-1)^{k}\cdot\rm{Id}$, $i=\alpha,\beta$. We distinguish them as follows. 

When $k=1$, $\pi^\theta_{1,\alpha}$ and $\pi^\theta_{1,\beta}$ are generic representations of $\textrm{SL}_2\fq$ with respect to non-conjugate generic data. Recall that $\psi$ is a  nontrivial additive character of $\mathbb{F}_q$, which will be fixed throughout the paper. Let $\psi'$ be another nontrivial additive character of $\mathbb{F}_q$ not in the square class of $\psi$. We label the cuspidal $\theta$-representations in the way  that 
\begin{itemize}
\item $\pi^\theta_{1,\alpha}$ and $\pi^\theta_{1,\beta}$ occur in the Weil representations 
$\omega_{\textrm{SL}_2,\psi}$ and $\omega_{\textrm{SL}_2, \psi'}$ of $\textrm{SL}_2\fq$, respectively;
\item $\pi^\theta_{k, i}$, $i=\alpha, \beta$, $k\geq 2$ are determined by the chain of conservation relations as in \cite{LW1}. See Theorem \ref{spthetalift} for details.
\end{itemize}

\subsection{Main result}

In the above settings, we now present the main result of this paper. 

\begin{theorem}\label{main}
(i) For an irreducible unipotent cuspidal representation  $\pi^\eta_{\rm{O}^\epsilon_{2k(k+1)+1}}$ of $\rm{O}^\epsilon_{2k(k+1)+1}\fq$, one has $\ell_0^\rm{B}(\pi^\eta_{\rm{O}^\epsilon_{2k(k+1)+1}})=k$ and 
 \[
 \CD^\rm{B}_{k, v_0}(\pi^\eta_{\rm{O}^{\epsilon}_{2k(k+1)+1}})=\pi^{\eta \cdot \epsilon(k)}_{\rm{O}^{\epsilon(k)}_{2k^2}}, 
\]
where $Q(v_0)= \epsilon \cdot \epsilon(k)$.

(ii) For an irreducible unipotent cuspidal representation $\pi^\eta_{\rm{O}^{\epsilon(k)}_{2k^2}}$ of $\rm{O}^{\epsilon(k)}_{2k^2}\fq$,  one has 
$\ell_0^\rm{B}(\pi_{\rm{O}^{\epsilon(k)}_{2k^2}})=k-1$ and
\[
 \CD^\rm{B}_{k-1, v_0}(\pi^\eta_{\rm{O}^{\epsilon(k)}_{2k^2}})=\pi^{\eta\cdot \epsilon(k-1)}_{\rm{O}^{\epsilon}_{2k(k-1)+1}},
\]
where $Q(v_0)=\epsilon_{-1}\cdot \epsilon\cdot\epsilon(k)$.

(iii) For the unique irreducible unipotent cuspidal representation $\pi_{\sp_{2k(k+1)}}$ of $\sp_{2k(k+1)}\fq$, one has  $\ell_0^\rm{FJ}(\pi_{\sp_{2k(k+1)}})=k$ and 
\[
 \CD^\rm{FJ}_{k, \psi}(\pi_{\sp_{2k(k+1)}})=\pi^\theta_{k,\alpha_k}, \quad  \CD^\rm{FJ}_{k, \psi'}(\pi_{\sp_{2k(k+1)}})=\pi^\theta_{k,\beta_k},
\]
where $(\alpha_k, \beta_k)=(\alpha, \beta)$ or $(\beta, \alpha)$ for $\epsilon_{-1}\cdot \epsilon(k)=+1$ or $-1$, respectively. 
\end{theorem}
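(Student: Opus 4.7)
The plan is to convert each descent computation into a theta-lift computation and then invoke the explicit description of the unipotent theta correspondence due to Pan \cite{P2} together with our earlier results on pseudo-unipotent representations \cite{LW1}. The starting point is a see-saw diagram: after isolating the $\mathrm{GL}_\ell$-generic character supported on the $(i,i+1)$-entries of $N_{\UUl{p}_\ell}$, the residual Bessel datum for $(\o(V_n),\o(W))$ is equivalent to a theta-type integral in which the anisotropic line spanned by $v_0$ is absorbed by a Weil representation on the symplectic side. This reduces the computation of $\ell_0$ and of the descent to locating where an iterated theta lift first becomes nonzero, and to identifying that first lift.

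I would proceed as follows. For parts (i) and (ii), the see-saw together with Pan's description of the theta correspondence between $\sp_{2m}$ and $\o^\epsilon_{2m'}$ translates non-vanishing of $\CQ^\rm{B}_{\ell,v_0}(\pi)$ into non-vanishing of a theta lift of $\pi$ along a chain of symplectic groups. Matching ranks via $\dim W = n - 2\ell - 1$ predicts $\ell_0 = k$ in (i) and $\ell_0 = k-1$ in (ii), and the discriminant identity \eqref{disc}, specialized to the prescribed values of $Q(v_0)$, places the descent in $\o^{\epsilon(k)}_{2k^2}$ and $\o^\epsilon_{2k(k-1)+1}$ respectively. The first-occurrence formulas for unipotent cuspidal representations, which are explicit in terms of Lusztig symbols, show that these levels are actually first occurrences and that the relevant theta lift is unipotent cuspidal, which identifies the $\so$-descent uniquely. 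Part (iii) is parallel: the Weil representation $\omega_\psi$ is built into the definition of $\CQ^\rm{FJ}_{\ell,\psi}$, so the see-saw directly identifies $\CD^\rm{FJ}_{k,\psi}(\pi_{\sp_{2k(k+1)}})$ with the theta lift of $\pi_{\sp_{2k(k+1)}}$ to $\sp_{2k^2}$ in the dual pair with an odd orthogonal group, and this lift is a cuspidal $\theta$-representation by \cite{LW1}.

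The main obstacle is tracking the discrete sign parameters: the superscript $\eta\in\{\pm\}$ distinguishing $\pi^+_{\o^\epsilon_n}$ from $\pi^-_{\o^\epsilon_n}$, and the label $\alpha$ versus $\beta$ for the two cuspidal $\theta$-representations of $\sp_{2k^2}\fq$. By \eqref{res} both components of \eqref{un} share the same $\so$-restriction, so the see-saw does not see $\eta$ directly. For $n$ odd I would pin $\eta$ down by computing the scalar by which the central element $-\mathrm{Id}$ acts on $\CQ^\rm{B}_{\ell,v_0}(\pi^\eta)$, since $-\mathrm{Id}$ is central in both $\o(V)$ and $\o(W)$ and is detected by the convention $\pi^\pm_{\o^\epsilon_n}(-1)=\pm\,\mathrm{Id}$; for $n$ even, and for the $\alpha/\beta$ label in part (iii), I would propagate the distinction through the chain of conservation relations fixed in \cite{AM, LW1}, which should explain why the $\alpha/\beta$ swap is controlled by the sign $\epsilon_{-1}\cdot\epsilon(k)$ appearing in the theorem and why $\omega_\psi$ versus $\omega_{\psi'}$ interchanges the two labels.
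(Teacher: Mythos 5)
Your overall strategy (see-saw diagrams plus the explicit unipotent theta correspondence and conservation relations) is broadly the same one the paper uses, but there is a genuine gap at the very first step: you never explain how the higher-corank Bessel multiplicity $m(\pi,\pi')=\langle\pi\otimes\bar{\nu},\pi'\rangle_{H\fq}$ gets converted into something a see-saw identity can see. The paper's essential tool here is the finite-field analogue of \cite[Theorem 15.1]{GGP1} (Propositions \ref{so1} and \ref{7.21}, and Proposition \ref{so2} in the Fourier--Jacobi case): because $\pi$ is unipotent and $\GGL_\ell\fq$ has no unipotent cuspidal representations for $\ell>1$, one may choose a non-unipotent cuspidal $\tau$ of $\GGL_\ell\fq$ and prove
\[
m(\pi,\pi')=\langle I^{\so^{\epsilon'}_{n+1}}_{P}(\tau\otimes\pi'),\pi\rangle,
\]
which replaces the twisted Jacquet module by a codimension-one restriction against a parabolic induction; only after this reduction does the see-saw with the extra $\o_1^{\epsilon''}$ factor (which is where $Q(v_0)$ and the Weil representation enter) apply. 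Your phrase ``the residual Bessel datum is equivalent to a theta-type integral'' asserts the conclusion of this reduction without an argument, and the cross terms one must kill (via the Bernstein--Zelevinsky filtration of $\tau|_R$) are exactly where the unipotence of $\pi$ and the non-unipotence of $\tau$ are used.

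A second gap lies in the vanishing half of the claim $\ell_0^{\rm B}=k$. A rank count only makes $\ell_0=k$ plausible; to prove $m(\pi,\pi')=0$ for every $\pi'$ on a too-small group one must control arbitrary, not just unipotent, $\pi'$. The paper does this by an induction on Harish-Chandra series (Propositions \ref{o3} and \ref{o2}) combined with Lemma \ref{4}, the point being that non-unipotent cuspidal supports have strictly smaller first occurrence indices; ``first-occurrence formulas in terms of Lusztig symbols'' for unipotent representations do not by themselves rule out non-unipotent $\pi'$, and a separate non-vanishing induction (Proposition \ref{oo}) is still needed to get multiplicity exactly one. Finally, your plan to pin down $\eta$ for odd $n$ via the central element is problematic: $-\mathrm{Id}_W\in\o(W)$ is not the restriction of $-\mathrm{Id}_V$ to the descent, so the scalar $\pi^\eta(-\mathrm{Id}_V)=\eta$ does not directly give the central character of $\CQ^{\rm B}_{\ell,v_0}(\pi^\eta)$; moreover the labels $\pi^\pm$ on the even orthogonal side are defined by conservation relations rather than central characters, so this computation would not identify the target label in part (i) in any case. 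The paper tracks all signs uniformly through Theorems \ref{even} and \ref{spthetalift} and the conservation relation of \cite{P1}.
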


As an immediate consequence of  \eqref{res}, we have the following Bessel descent for special orthogonal groups.

\begin{corollary}
(i) For the unique irreducible unipotent cuspidal representation  $\pi_{\rm{SO}^\epsilon_{2k(k+1)+1}}$ of $\rm{SO}^\epsilon_{2k(k+1)+1}\fq$, one has $\ell_0^\rm{B}(\pi_{\rm{SO}^\epsilon_{2k(k+1)+1}})=k$ and 
 \[
 \CD^\rm{B}_{k, v_0}(\pi_{\rm{SO}^{\epsilon}_{2k(k+1)+1}})=\pi_{\rm{SO}^{\epsilon(k)}_{2k^2}}, 
\]
where $Q(v_0)= \epsilon \cdot \epsilon(k)$.

(ii) For the unique irreducible unipotent cuspidal representation $\pi_{\rm{SO}^{\epsilon(k)}_{2k^2}}$ of $\rm{SO}^{\epsilon(k)}_{2k^2}\fq$,  one has 
$\ell_0^\rm{B}(\pi_{\rm{SO}^{\epsilon(k)}_{2k^2}})=k-1$ and
\[
 \CD^\rm{B}_{k-1, v_0}(\pi_{\rm{SO}^{\epsilon(k)}_{2k^2}})=\pi_{\rm{SO}^{\epsilon}_{2k(k-1)+1}},
\]
where $Q(v_0)=\epsilon_{-1}\cdot \epsilon\cdot\epsilon(k)$.
\end{corollary}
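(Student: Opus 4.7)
The plan is to deduce the corollary by restricting the conclusions of Theorem \ref{main}(i)-(ii) along the inclusion $\so(W)\fq\subset \o(W)\fq$ and invoking the branching identity \eqref{res}. The key point to record first is that the Bessel descent construction behaves well under restriction from the full orthogonal group to the special orthogonal group. Since $N_{\UUl{p}_\ell}$ is connected unipotent, it lies in $\so(V_n)$, and the character $\psi_{\UUl{p}_\ell,v_0}$ is defined without reference to orientations. Consequently, for any representation $\pi$ of $\o^\epsilon_n\fq$, the underlying vector spaces of $\CJ_{\ell,v_0}(\pi)$ and of $\CJ_{\ell,v_0}(\pi|_{\so_n^\epsilon\fq})$ coincide: both are the same quotient of the representation space by the $\psi_{\UUl{p}_\ell,v_0}$-twisted coinvariants for $N_{\UUl{p}_\ell}\fq$. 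Moreover, the identity component of the stabilizer of $\psi_{\UUl{p}_\ell,v_0}$ in the Levi is the same group $\so(W)\fq$ whether the ambient group is $\o(V_n)$ or $\so(V_n)$. Hence
\[
\CQ^\rm{B}_{\ell,v_0}(\pi)\big|_{\so(W)\fq}\;=\;\CQ^\rm{B}_{\ell,v_0}\bigl(\pi|_{\so_n^\epsilon\fq}\bigr)
\]
as $\so(W)\fq$-representations, and the two sides vanish simultaneously.

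Applied to $\pi=\pi^\eta_{\o^\epsilon_{2k(k+1)+1}}$ at $\ell=k$, Theorem \ref{main}(i) gives $\CQ^\rm{B}_{k,v_0}(\pi)=\pi^{\eta\cdot\epsilon(k)}_{\o^{\epsilon(k)}_{2k^2}}$ for the prescribed $v_0$ with $Q(v_0)=\epsilon\cdot\epsilon(k)$. By \eqref{res} we have $\pi|_{\so^\epsilon_{2k(k+1)+1}\fq}\cong \pi_{\so^\epsilon_{2k(k+1)+1}}$ and $\pi^{\eta\cdot\epsilon(k)}_{\o^{\epsilon(k)}_{2k^2}}\big|_{\so^{\epsilon(k)}_{2k^2}\fq}\cong \pi_{\so^{\epsilon(k)}_{2k^2}}$, so the compatibility displayed above yields the desired descent identity in part (i). The vanishing portion of Theorem \ref{main}(i) transports in the same way, giving $\ell_0^\rm{B}(\pi_{\so^\epsilon_{2k(k+1)+1}})=\ell_0^\rm{B}(\pi^\eta_{\o^\epsilon_{2k(k+1)+1}})=k$. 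Part (ii) is completely analogous, starting from Theorem \ref{main}(ii) applied to $\pi^\eta_{\o^{\epsilon(k)}_{2k^2}}$ at $\ell=k-1$.

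No serious obstacle is expected, as the corollary is labelled an \emph{immediate consequence} of \eqref{res}. The only point requiring a brief verification is the identification of $\so(W)\fq$-module structures under restriction, which reduces to the observation that the Levi-stabilizer of $\psi_{\UUl{p}_\ell,v_0}$ is an orthogonal group of $W$ whose identity component $\so(W)$ does not depend on whether we view $M$ inside $\o(V_n)$ or inside $\so(V_n)$.
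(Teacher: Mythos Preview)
Your proposal is correct and follows exactly the approach the paper indicates: the corollary is stated as an ``immediate consequence of \eqref{res}'' applied to Theorem \ref{main}(i)--(ii), and you have spelled out precisely this deduction, together with the routine verification that the twisted Jacquet module (taken with respect to the unipotent group $N_{\UUl{p}_\ell}\subset \so(V_n)$) commutes with restriction from $\o(V_n)$ to $\so(V_n)$. The paper gives no further argument beyond invoking \eqref{res}, so your write-up is in fact more detailed than the original.
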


 This paper is organized as follows. In Section \ref{sec2}, we recall the notion of Harish-Chandra series. In Section \ref{sec3}, we recall the theory of Weil representation, theta correspondence and see-saw dual pairs. In Section \ref{sec4} we  recall the theta correspondence and the first occurrence index of unipotent cuspidal representations of finite orthogonal groups and symplectic groups. In Section \ref{sec5} we  prove the Bessel case of Theorem \ref{main}. In Section \ref{sec6} we prove the Fourier-Jacobi case.

\subsection*{Ackonwledgement} We thank the anonymous referee for raising numerous comments which improve the exposition of this paper. 

\section{Harish-Chandra series}\label{sec2}
Let $G$ be a reductive group defined over $\Fq$, $F$ be the corresponding Frobenius endomorphism, and $\E(G)=\rm{Irr}(G^F)$ be the set of irreducible representations of $G^F$.
A parabolic subgroup $P$ of $G$ is the normalizer in $G$ of a parabolic subgroup $P^\circ$ of the connected component $G^\circ$ of $G$.
A Levi subgroup $L$ of $P$ is the normalizer in $G$ of the  Levi subgroup $L^\circ$ of $P^\circ$. Then we have a Levi decomposition $P=LV$. If $P$ is $F$-stable, then we have $P^F = L^FV^F.$ Let $\delta$ be a representation of the group $L^F$. We can lift $\delta$ to a character of $P^F$ by making it trivial on $V^F$. We have the parabolic induction
\begin{equation}\label{paraind}
I_L^G(\delta):=I_P^G(\delta)=\mathrm{Ind}_{P^F}^{G^F}\delta.
\end{equation}
It is well-known that the induction in stages holds (see e.g. \cite[Proposition 4.7]{DM}), namely if $Q \subset P$ are two parabolic subgroups of $G$ and $M \subset L$
are the corresponding Levi subgroups, then
\[
I^G_L\circ I^L_M=I^G_M.
\]
We say that a pair $(L, \delta)$ is cuspidal if $\delta$ is cuspidal.

\begin{theorem} For $\pi \in \E(G)$, there is  a unique cuspidal pair $(L, \delta)$ up to $G^F$-conjugacy such that $\langle\pi,I_{L}^G(\delta)\rangle_{G^F}\ne 0$
\end{theorem}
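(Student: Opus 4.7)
The plan is to prove existence by induction on the semisimple rank of $G$, and uniqueness by Mackey/adjunction together with the defining property of cuspidality.

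For existence, I would argue as follows. If $\pi$ itself is cuspidal, take $(L,\delta)=(G,\pi)$ and we are done. Otherwise, by definition of non-cuspidality there is a proper $F$-stable parabolic $P=LV$ of $G$ and an irreducible representation $\tau$ of $L^F$ with $\langle \pi, I_L^G(\tau)\rangle_{G^F}\neq 0$ (this follows from Frobenius reciprocity applied to the Jacquet functor, which is nonzero on $\pi$ for at least one proper $P$). Since $L^\circ$ has strictly smaller semisimple rank than $G^\circ$, the inductive hypothesis supplies a cuspidal pair $(M,\delta)$ in $L$ with $\langle\tau,I_M^L(\delta)\rangle_{L^F}\neq 0$. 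Applying the induction-in-stages identity $I_L^G\circ I_M^L=I_M^G$ quoted in the text, $\pi$ appears in $I_M^G(\delta)$, giving the required cuspidal pair in $G$.

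For uniqueness, suppose $\pi$ occurs in both $I_L^G(\delta)$ and $I_{L'}^G(\delta')$ with $(L,\delta)$, $(L',\delta')$ cuspidal. By Frobenius reciprocity the Jacquet restriction $R^G_{L'}I_L^G(\delta)$ contains $\delta'$. I would apply the Mackey decomposition formula for parabolic induction followed by parabolic restriction, which expresses $R^G_{L'}I_L^G(\delta)$ as a sum, indexed by double cosets $w$ in $W(L',L)^F$, of terms of the form $I_{L'\cap{}^wL}^{L'}\circ R^{{}^wL}_{L'\cap{}^wL}({}^w\delta)$. Since $\delta'$ is cuspidal in $L'$, it cannot appear in a parabolic induction from a proper Levi of $L'$; hence only those $w$ for which $L'\cap{}^wL=L'$, i.e.\ ${}^wL\supseteq L'$, contribute. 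By symmetry (swapping the roles of $L,L'$) one also forces $L\supseteq {}^{w^{-1}}L'$, so $L$ and $L'$ are $G^F$-conjugate Levi subgroups. Conjugating by the corresponding $w$, we may assume $L=L'$; then the Mackey sum reduces and the cuspidality of $\delta,\delta'$ together with Frobenius reciprocity forces $\delta'\cong {}^w\delta$ for some $w\in N_{G^F}(L)/L^F$, i.e.\ the pair is unique up to $G^F$-conjugacy.

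The main obstacle is the Mackey formula in the present generality, because the paper works with (possibly) disconnected $G$ whose parabolics are defined as normalizers of parabolics of $G^\circ$. One has to verify the Mackey/Bruhat decomposition for these normalizer parabolics, checking that the relevant double coset representatives can be chosen in $N_{G^F}(L,L')$ and that induction in stages and the cuspidality obstruction both survive passage to the disconnected setting. Once this bookkeeping is in place, the two steps above go through exactly as in the classical connected case of Harish-Chandra. I would cite \cite{DM} for the relevant Mackey formula and the invariance of cuspidality under $G^F$-conjugacy.
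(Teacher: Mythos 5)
The paper does not prove this theorem at all: it is quoted as the classical Harish-Chandra decomposition of $\E(G)$ into series, with the standard reference being \cite{DM}. Your argument is precisely the classical proof — existence by induction on semisimple rank via transitivity of parabolic induction, uniqueness via the Mackey formula for $R^G_{L'}\circ I_L^G$ combined with the fact that a cuspidal representation cannot occur in a proper parabolic induction nor have a nonzero proper Jacquet restriction — and it is correct. The one point you rightly flag as needing care is the disconnected setting (the paper's parabolics are normalizers in $G$ of parabolics of $G^\circ$); this bookkeeping is standard for the groups actually used here ($\o_n^\pm$), and with that reference in place your proof is complete.
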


Thus we get a partition of $\E(G)$ into series parametrized by $G^F$-conjugacy classes of cuspidal pairs $(L, \delta)$. The Harish-Chandra series of $(L, \delta)$ is the set of irreducible representations of $G^F$ appearing in $I_{L}^G(\delta)$.
We focus on classical groups, and let $L$ be an $F$-stable standard Levi subgroup of $G_n:=\sp_{2n}$, $\o^\pm_{2n}$ or $\o_{2n+1}$. Then $L^F$ has a standard form
\[
L^F=\GGL_{n_1}\fq\times \GGL_{n_2}\fq\times\cdots\times \GGL_{n_r}\fq\times G_m^F
\]
where $G_m=\sp_{2m}$, $\o^\pm_{2m}$ or $\o_{2m+1}$, and $n_1+\cdots+n_r +m=n$. For a cuspidal pair $(L,\delta)$, one has
\[
\delta=\rho_1\otimes\cdots\otimes\rho_r\otimes\sigma
\]
where $\rho_i$ and $\sigma$ are cuspidal representations of $\GGL_{n_i}\fq$ and $G_m^F$, respectively.

By induction in stages, for any irreducible component $\pi$ of $I_L^G(\delta)$, there exists $\rho\in \E(\GGL_{n-m})$ such that
$\pi \subset I_{\GGL_{n-m}\times G_m}^G(\rho\otimes\sigma).
$
Let
\[
\cal{E}(G_n,\sigma)=\{\pi\in \cal{E}(G_n)|\langle\pi,I_{\GGL_{n-m}\times G_m}^G(\rho\otimes\sigma)\rangle_{G^F}\ne 0\textrm{ for some }\rho\in \E(\GGL_{n-m})\}.
\]
Then we have  a disjoint union
\[
\E(G_n)=\bigcup_{\sigma}\cal{E}(G_n,\sigma),
\]
where $\sigma$ runs over all irreducible cuspidal representations of $G^{F}_m$, $m=0,1,\cdots,n$.

\section{Theta correspondence and see-saw dual pairs}\label{sec3}

As mentioned earlier, we fix the nontrivial additive character $\psi$ of $\Fq$ throughout. Let $\omega_{\rm{Sp}_{2N}}=\omega_{\rm{Sp}_{2N}, \psi}$ be the Weil representation  of the finite symplectic group $\rm{Sp}_{2N}(\Fq)$, which depends on $\psi$.
Let $(G, G^{\prime})$ be a reductive dual pair in $\rm{Sp}_{2N}$, and write
$\omega_{G,G'}$ for the restriction of $\omega_{\rm{Sp}_{2N}}$ to $G^F\times G'^F$. Then it decomposes into a direct sum
\[
\omega_{G,G'}=\bigoplus_{\pi,\pi'} m_{\pi,\pi '}\pi\otimes\pi',
\]
where $\pi$ and $\pi '$ run over $\rm{Irr}(G^F)$ and $\rm{Irr}(G'^F)$ respectively, and $m_{\pi,\pi'}$ are nonnegative integers. Rearrange this decomposition as
\[
\omega_{G,G'}=\bigoplus_{\pi} \pi\otimes\Theta_{G,G'}(\pi )
\]
 where $\Theta_{G, G'}(\pi ) = \bigoplus_{\pi'} m_{\pi,\pi '}\pi '$ is a (not necessarily irreducible) representation of $G'^F$, called the (big) theta lifting of $\pi$ from $G$ to $G'$. Write $\pi'\subset \Theta_{G'}(\pi)$ if $\pi\otimes\pi'$ occurs in $\omega_{G,G'}$, i.e. $m_{\pi, \pi'}\neq 0$. We remark that even if $\Theta_{G,G'}(\pi)=:\pi'$ is irreducible, one only has
 \[
 \pi\subset \Theta_{G',G}(\pi'),
 \]
 where the equality does not  necessarily hold in general.

It is convenient to work with the families of dual pairs $(G_n, G_{n'}')$  associated to Witt towers $G_n\in \cal{T}$ and $G_{n'}'\in \cal{T}'$ instead of  a single dual pair. In this paper we only consider the following Witt towers.

 $\bullet$ For symplectic groups there is only one Witt tower ${\bf Sp}=\left\{\sp_{2n}\right\}_{n\geq 0}$.

 $\bullet$ For even orthogonal groups there are two Witt towers ${\bf O}^+_\rm{even}=\left\{\o^+_{2n}\right\}_{n\geq 0}$ and ${\bf O}^-_\rm{even}=\left\{\o^-_{2n}\right\}_{n\geq 1}$.

$\bullet$ For odd orthogonal groups there are two Witt towers as well ${\bf O}^\epsilon_\rm{odd}=\left\{\o^\epsilon_{2n+1}\right\}_{n\geq 0}$, $\epsilon=\pm$.

Recall the convention that $\o^+_{2n}$ (resp. $\o^-_{2n}$) denotes the isometry group of the split (resp. nonsplit) form of dimension $2n$. For odd orthogonal groups, one has $\o^+_{2n+1}\cong\o^-_{2n+1}$ as abstract groups; however they act on two quadratic spaces with different discriminants. 

When the context of the pair of Witt towers $\{G_n\}$ and   $\{G'_{n'}\}$ is clear,  write $\omega^\epsilon_{n,n'}$ instead of $\omega_{G_n,G'_{n'}}$, and  denote $\Theta_{n,n'}=\Theta^\epsilon_{n,n'}$ the theta lifting from $G_n$ to $G'_{n'}$, where the superscript $\epsilon$ reminds the discriminant of the orthogonal Witt tower.
For an irreducible representation $\pi$ of $G_n$, the smallest integer $n^\epsilon(\pi)$ such that $\pi$ occurs in $\omega^\epsilon_{n,n^\epsilon(\pi)}$ is called the {\it first occurrence index} of $\pi$ in the Witt tower $\left\{G'_{n'}\right\}$. By \cite[Chap.3, lemme IV.2]{MVW}, there exists $n'$ such that $\Theta_{n, n'}^\epsilon(\pi)\neq 0$, hence $n^\epsilon(\pi)$ is well-defined.  Note that the first occurrence indices depend on the choice of $\psi$, and are subject to various conservation relations. 

The next result shows that the theta lifting and the parabolic induction are compatible.

\begin{proposition}\label{w1}
Let $G_n$ and $G_{n+\ell}$ be two classical groups in the same Witt tower, $\ell\geq 0$. Let $\tau$ be an irreducible cuspidal representation of $\GGL_\ell(\mathbb{F}_{q})$, $\pi$ be an irreducible representation of $G_n(\Fq)$, and  $\pi':=\Theta_{n,n'}(\pi)$. Let $\chi_{\GGL_{\ell}}$ be the unique linear character of $\GGL_\ell\fq$ of order $2$. Let $\rho\subset I^{G_{n+\ell}}_{\GGL_\ell\times G_{n}}(\tau \otimes \pi)$ be an irreducible representation of $G_{n+\ell}$ and $\rho' \subset \Theta_{n+\ell, n'+\ell}(\rho)$ be an irreducible representation of $G'_{n'+\ell}$.  Assume that $\tau$ is non-selfdual if $\ell=1$. Then we have
\[
\rho'\subset I^{G'_{n'+\ell}}_{\GGL_\ell\times G'_{n'}}((\chi\otimes\tau)\otimes \pi'),
\]
where
\[
\chi=\left\{
\begin{array}{ll}
\chi_{\GGL_{\ell}}, &  \textrm{if $(G_{n+\ell},G'_{n'+\ell})$ contains an odd orthogonal group,}\\
1, & \textrm{otherwise.}
\end{array}\right.
\]
In particular, if $I^{G_{n+\ell}}_{\GGL_\ell\times G_{n}}(\tau \otimes \pi)$ is irreducible, then
\[
\Theta_{n+\ell, n'+\ell}(I^{G_{n+\ell}}_{\GGL_\ell\times G_{n}}(\tau \otimes \pi))= I^{G'_{n'+\ell}}_{\GGL_\ell\times G'_{n'}}((\chi\otimes\tau)\otimes \pi').
\]
\end{proposition}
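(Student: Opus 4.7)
The natural approach is a see-saw dual pair argument combined with the computation of the Jacquet module of the Weil representation $\omega_{n+\ell, n'+\ell}$ along the unipotent radical of an appropriate parabolic. Let $P = (\GGL_\ell \times G_n) \ltimes N \subset G_{n+\ell}$ and $P' = (\GGL_\ell \times G'_{n'}) \ltimes N' \subset G'_{n'+\ell}$ be the parabolics with the indicated Levi factors. By Frobenius reciprocity, the hypothesis $\rho \subset I^{G_{n+\ell}}_{\GGL_\ell \times G_n}(\tau \otimes \pi)$ is equivalent to a nonzero $(\GGL_\ell \times G_n)\fq$-equivariant surjection $J_N(\rho) \twoheadrightarrow \tau \otimes \pi$, and the desired conclusion is equivalent to producing a nonzero surjection $J_{N'}(\rho') \twoheadrightarrow (\chi \otimes \tau) \otimes \pi'$.

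The main input is a Kudla-type filtration describing $J_N(\omega_{n+\ell, n'+\ell})$ as a $(\GGL_\ell \times G_n)\fq \times G'_{n'+\ell}\fq$-module. Its graded pieces are indexed by $0 \leq j \leq \ell$ and involve reduced Weil representations $\omega_{n, n'+\ell-2j}$ on the smaller pair together with parabolically induced representations on the $\GGL_\ell$-factor. Because $\tau$ is cuspidal, all intermediate pieces $0 < j < \ell$ are annihilated when we pair with $\tau$; in the marginal case $\ell = 1$, the top piece $j = 0$ would still contribute a $\tau^\vee$-isotypic summand and is therefore killed exactly by the non-selfdual hypothesis. This isolates the bottom layer $j = \ell$, which under the see-saw
\begin{equation*}
\begin{array}{ccc}
G_{n+\ell} & & \GGL_\ell \times G'_{n'} \\
& & \\
\GGL_\ell \times G_n & & G'_{n'+\ell}
\end{array}
\end{equation*}
is naturally isomorphic to $\omega_{\GGL_\ell, \GGL_\ell} \otimes \omega_{n, n'}$, where $\omega_{\GGL_\ell, \GGL_\ell}$ is the Weil representation of the doubled $\GGL_\ell$ pair inside the ambient symplectic group.

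Once this piece is isolated, the see-saw identity together with the definition $\pi' = \Theta_{n,n'}(\pi)$ transfers the surjection $J_N(\rho) \twoheadrightarrow \tau \otimes \pi$ to a surjection $J_{N'}(\rho') \twoheadrightarrow (\chi \otimes \tau) \otimes \pi'$. The character $\chi$ arises as the difference between $\omega_{\GGL_\ell, \GGL_\ell}$ and the left-right regular representation of $\GGL_\ell\fq \times \GGL_\ell\fq$ on the function space of $\GGL_\ell\fq$: this difference records the choice of splitting of the metaplectic cover over the dual pair $(\GGL_\ell, \GGL_\ell)$, which is canonical (hence trivial as a twist) in the symplectic--even orthogonal case and picks up the unique order-two linear character $\chi_{\GGL_\ell}$ of $\GGL_\ell\fq$ precisely when an odd orthogonal Witt tower is involved.

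The main obstacle is the filtration analysis: one must verify that the intermediate subquotients $0 < j < \ell$ carry no cuspidal $\GGL_\ell$-isotypic component, and track explicitly how the top piece $j = 0$ (the ``no-interaction'' contribution) pairs with $\tau$ to produce $\tau^\vee$, so that the non-selfdual hypothesis in the case $\ell = 1$ is exactly what is needed to eliminate it. Once the filtration and the metaplectic splitting are understood, the see-saw identification is routine. For the final ``in particular'' statement, the inclusion $\rho' \subset \Theta_{n+\ell, n'+\ell}(\rho)$ together with the irreducibility of $I^{G_{n+\ell}}_{\GGL_\ell \times G_n}(\tau \otimes \pi)$ forces equality of the two theta lifts by comparing the isotypic decompositions on both sides.
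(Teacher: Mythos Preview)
Your overall strategy matches the paper's: compute the Jacquet module of $\omega_{n+\ell,n'+\ell}$ along the parabolic of $G_{n+\ell}$, use cuspidality of $\tau$ to isolate the piece indexed by $i=\ell$, and read off $\rho' \subset I((\chi\tau)\otimes\pi')$ from there. But your description of the $j=0$ piece contains a real error. In the decomposition the paper quotes from \cite[Chap.~3, IV th.~5]{MVW}, the $i=0$ summand carries on its $\GGL_\ell$-factor the \emph{fixed} linear character $\chi$ (equal to $1$ or to $\chi_{\GGL_\ell}$, depending on the tower), not anything built from $\tau^\vee$. The reason this piece dies when paired against $\tau$ is therefore: for $\ell>1$ a linear character of $\GGL_\ell\fq$ is never cuspidal, so $\langle\tau,\chi\rangle=0$ automatically; and for $\ell=1$ the characters $1$ and $\chi_{\GGL_1}$ are precisely the two selfdual characters of $\mathbb{F}_q^\times$, so the non-selfdual hypothesis excludes them. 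Your claim that the $j=0$ piece ``pairs with $\tau$ to produce $\tau^\vee$'' is not what actually happens, and following that logic you would not see why the case $\ell>1$ requires no hypothesis on $\tau$ beyond cuspidality.

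With the $i=0$ analysis corrected, the argument is exactly the paper's. Note also that the paper does not invoke a see-saw here: it directly computes $\langle\omega_{n+\ell,n'+\ell},\, I(\tau\otimes\pi)\otimes\rho'\rangle$ by one application of Frobenius reciprocity on the $G_{n+\ell}$-side, arrives at the \emph{equality} $\langle I((\chi\tau)\otimes\pi'),\,\rho'\rangle$, and bounds this below by $\langle\omega_{n+\ell,n'+\ell},\,\rho\otimes\rho'\rangle>0$. That equality for every $\rho'$ is also what gives the ``in particular'' statement immediately when $I(\tau\otimes\pi)$ is irreducible. Your see-saw packaging and talk of producing a surjection $J_{N'}(\rho')\twoheadrightarrow(\chi\tau)\otimes\pi'$ is not wrong conceptually, but it is an extra layer that the direct pairing computation avoids.
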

\begin{proof}
We will only prove the proposition for $(G_{n},G_{n'}')\in(\bf{Sp},\bf{O}^\epsilon_{\rm{odd}})$. The proof for other cases is similar and will be left to
the reader.

Here $J$ standards for the Jacquet functor, which is adjoint to the induction functor $I$. We have the following decomposition (cf. \cite[Chap. 3, IV th.5]{MVW})
\[
\begin{aligned}
&J^{\sp_{2(n+\ell)}}_{\sp_{2n}\times \GGL_{\ell}}(\omega^\epsilon_{n+\ell,n'+\ell})
\\=&\bigoplus^{\ell}_{i=0}I^{\sp_{2n}\times \GGL_{\ell} \times \o^\epsilon_{2(n'+\ell)+1}}_{\sp_{2n}\times (  \GGL_{\ell-i}\times \GGL_{i})\times \GGL_{i}\times \o^\epsilon_{2(n'+\ell-i)+1}}(\omega^\epsilon_{n,n'+\ell-i} \otimes \chi_{\GGL_{\ell-i}} \otimes \chi_{\GGL_{i}}R^{\GGL_{i}})
\end{aligned}
\]
where the regular representation $R^{\GGL_{i}}$ is considered as a representation  of $\GGL_i\fq\times \GGL_i\fq$.
Then
\[
\begin{aligned}
&\langle \omega^\epsilon_{n+\ell,n'+\ell}, I^{G_{n+\ell}}_{\GGL_\ell\times G_{n}}(\tau \otimes \pi)\otimes\rho'\rangle\\
=&\langle J^{\sp_{2(n+\ell)}}_{\sp_{2n}\times \GGL_{\ell}}(\omega^\epsilon_{n+\ell,n'+\ell}), (\tau \otimes \pi)\otimes\rho'\rangle\\
=&\bigoplus^{\ell}_{i=0}\langle I^{\sp_{2n}\times \GGL_{\ell} \times \o^\epsilon_{2(n'+\ell)+1}}_{\sp_{2n}\times (  \GGL_{\ell-i}\times \GGL_{i})\times \GGL_{i}\times \o^\epsilon_{2(n'+\ell-i)+1}}(\omega^\epsilon_{n,n'+\ell-i} \otimes \chi_{\GGL_{\ell-i}} \otimes \chi_{\GGL_{i}}R^{\GGL_{i}})  , (\tau \otimes \pi)\otimes\rho'\rangle\\
=& \langle I^{\sp_{2n}\times \GGL_{\ell} \times \o^\epsilon_{2(n'+\ell)+1}}_{\sp_{2n}\times  \GGL_{\ell}\times \GGL_{\ell}\times \o^\epsilon_{2n'+1}}(\omega^\epsilon_{n,n'} \otimes  \chi_{\GGL_{\ell}}R^{\GGL_{\ell}})  , (\tau \otimes \pi)\otimes\rho'\rangle\\
=& \langle (\tau \otimes \pi)\otimes I^{ \o^\epsilon_{2(n'+\ell)+1}}_{\GGL_{\ell}\times \o^\epsilon_{2n'+1}}(\chi_{\GGL_{\ell}}\tau\otimes\Theta^\epsilon_{n,n'}(\pi)) ,(\tau \otimes \pi)\otimes\rho' \rangle\\
=& \langle  I^{ \o^\epsilon_{2(n'+\ell)+1}}_{\GGL_{\ell}\times \o^\epsilon_{2n'+1}}(\chi_{\GGL_{\ell}}\tau\otimes\pi') ,\rho' \rangle.
\end{aligned}
\]
By our assumption, one has
\[
\langle  I^{ \o^\epsilon_{2(n'+\ell)+1}}_{\GGL_{\ell}\times \o^\epsilon_{2n'+1}}(\chi_{\GGL_{\ell}}\tau\otimes\pi') ,\rho' \rangle=\langle \omega^\epsilon_{n+\ell,n'+\ell}, I^{G_{n+\ell}}_{\GGL_\ell\times G_{n}}(\tau \otimes \pi)\otimes\rho'\rangle\ge\langle \omega^\epsilon_{n+\ell,n'+\ell},\rho\otimes\rho'\rangle>0.
\]
\end{proof}

Recall the general formalism of see-saw dual pairs. Let $(G, G')$ and $(H, H')$ be two reductive dual pairs in a finite symplectic group $\rm{Sp}(W)$ such that $H \subset G$ and $G' \subset H'$.  Then there is a see-saw diagram
\[
\setlength{\unitlength}{0.8cm}
\begin{picture}(20,5)
\thicklines
\put(6.8,4){$G$}
\put(6.8,1){$H$}
\put(12.1,4){$H'$}
\put(12,1){$G'$}
\put(7,1.5){\line(0,1){2.1}}
\put(12.3,1.5){\line(0,1){2.1}}
\put(7.5,1.5){\line(2,1){4.2}}
\put(7.5,3.7){\line(2,-1){4.2}}
\end{picture}
\]
and the associated see-saw identity
\[
\langle \Theta_{G',G}(\pi_{G'}),\pi_H\rangle_{H\fq} =\langle \pi_{G'},\Theta_{H,H'}(\pi_H)\rangle_{G'\fq},
\]
where $\pi_H$ and $\pi_{G'}$ are irreducible representations of $H\fq$ and $G'\fq$ respectively.

First consider the case that
\[
G\cong\o^\epsilon_{2n},\ H\cong\o^{\epsilon'}_{2n-1}\times\o^{\epsilon''}_1,\ H'\cong\sp_{2n'} \times\sp_{2n'}\textrm{ and }G'\cong\sp_{2n'},
\]
 where  $\epsilon=\epsilon_{-1}\cdot \epsilon'\cdot\epsilon''$ so that $H$ is embedded into $G$  by \eqref{disc}, and $G'$ is embedded into $H'$ diagonally.   Then we have the see-saw diagram
\[
\setlength{\unitlength}{0.8cm}
\begin{picture}(20,5)
\thicklines
\put(6.6,4){$\o^\epsilon_{2n}$}
\put(5.6,1){$\o^{\epsilon'}_{2n-1}\times\o^{\epsilon''}_1$}
\put(11.2,4){$\sp_{2n'}\times\sp_{2n'}$}
\put(12,1){$\sp_{2n'}$}
\put(7,1.5){\line(0,1){2.1}}
\put(12.3,1.5){\line(0,1){2.1}}
\put(7.5,1.5){\line(2,1){4.2}}
\put(7.5,3.7){\line(2,-1){4.2}}
\end{picture}
\]
Similarly, consider the case that
 \[
 G\cong\o^\epsilon_{2n+1},\ H\cong\o^{\epsilon'}_{2n}\fq\times\o^{\epsilon''}_1,\ H'\cong\sp_{2n'}\times\sp_{2n'}\textrm{ and }G'\cong\sp_{2n'},
 \]
 where $\epsilon=\epsilon'\cdot\epsilon''$ so that $H$ is embedded into $G$ again by \eqref{disc}.
Then we have the see-saw diagram
\[
\setlength{\unitlength}{0.8cm}
\begin{picture}(20,5)
\thicklines
\put(6.1,4){$\o^\epsilon_{2n+1}$}
\put(5.9,1){$\o^{\epsilon'}_{2n}\times\o^{\epsilon''}_1$}
\put(11.2,4){$\sp_{2n'}\times\sp_{2n'}$}
\put(12,1){$\sp_{2n'}$}
\put(7,1.5){\line(0,1){2.1}}
\put(12.3,1.5){\line(0,1){2.1}}
\put(7.5,1.5){\line(2,1){4.2}}
\put(7.5,3.7){\line(2,-1){4.2}}
\end{picture}
\]

\section{First occurrence index for symplectic and orthogonal groups}\label{sec4}

The aim of this section is to prove the following result.
\begin{lemma} \label{4}
(i) Let $\pi$ be an irreducible cuspidal representation of $\o^{\epsilon}_{2m}\fq$ with $m\le k^2$. If $\pi$ is not unipotent, then
\[
n^{\epsilon}(\pi)<m+k;
\]
(ii) Let $\pi$ be an irreducible cuspidal representation of $\o_{2m+1}^{\epsilon}\fq$ with $m\le k(k-1)$. If $\pi$ is not unipotent, then
\[
n^{\epsilon}(\pi)<m+k.
\]
\end{lemma}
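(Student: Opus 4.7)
The plan is to use Lusztig's Jordan decomposition of cuspidal representations together with Pan's~\cite{P2} description of the theta correspondence on Lusztig series.

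By Jordan decomposition, every non-unipotent irreducible cuspidal representation $\pi$ of $\o^\epsilon_{2m}\fq$ corresponds to a pair $(s,\pi_u)$ with $s\neq 1$ a semisimple class in the dual group and $\pi_u$ a cuspidal unipotent on $C(s)^F$. Cuspidality of $\pi_u$ restricts $C(s)^F$ to the form
\[
C(s)^F\cong \prod_i\GGL_1(\mathbb{F}_{q^{d_i}})\times\o^{(-1)^{k_+}}_{2k_+^2}\fq\times\o^{(-1)^{k_-}}_{2k_-^2}\fq,
\]
since cuspidal unipotents of $\GGL_n$ and $\o^\pm_{2m'}$ exist only when $n=1$ and $m'=k'^2$ respectively. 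The rank equation reads $m=\sum_i d_i + k_+^2 + k_-^2$. The condition $s\neq 1$ excludes the extremal configuration $(k_+,k_-,\sum_i d_i)=(k,0,0)$, which combined with $m\le k^2$ forces $\max(k_+,k_-)\le k-1$.

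Iterated use of Proposition~\ref{w1} strips off the $\GGL_1$-factors. Pan's~\cite{P2} analysis of the theta correspondence on the remaining orthogonal factors of $C(s)^F$ then yields the bound
\[
n^\epsilon(\pi)\le m + \max(k_+, k_-),
\]
which, combined with $\max(k_+, k_-)\le k-1$, gives $n^\epsilon(\pi)\le m+k-1<m+k$, proving part~(i). Part~(ii) proceeds analogously: the centralizer of $s\ne 1$ in the dual group $\sp_{2m}$ takes the form $\prod\GGL_1(\mathbb{F}_{q^{d_i}})\times\sp_{2k_+(k_++1)}\fq\times\sp_{2k_-(k_-+1)}\fq$, and $m\le k(k-1)$ combined with $s\ne 1$ forces $\max(k_+,k_-)\le k-1$, again yielding $n^\epsilon(\pi)<m+k$.

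The main technical obstacle is justifying the bound $n^\epsilon(\pi)\le m + \max(k_+, k_-)$. While Proposition~\ref{w1} handles the $\GGL_1$-factors uniformly, the contribution of the two classical factors of $C(s)^F$ requires a careful sign and discriminant analysis tracking the interaction of the fixed character $\psi$ with the Weil representation restricted to $C(s)^F$. In particular, the two first-occurrence contributions from $\pi_{u,\pm}$ must partially cancel under the discriminant constraint $\epsilon=(-1)^{k_++k_-}$ imposed by the embedding $C(s)\hookrightarrow\o^\epsilon_{2m}$, and verifying this in all cases relies on Pan's detailed analysis in~\cite{P2}.
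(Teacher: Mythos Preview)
Your overall strategy—Jordan decomposition plus the compatibility of Lusztig and Howe correspondences—is the right one, and it is essentially the content of the references the paper invokes. The paper's own proof is a one-line citation: Lemma~\ref{4} ``follows immediately from Theorem~\ref{even}, Theorem~\ref{spthetalift} and \cite[Theorem 6.9 and Theorem 7.9]{P3}.'' These last two results of Pan compute $n^\epsilon(\pi)$ for an arbitrary cuspidal $\pi$ in terms of its Lusztig data, and the inequality then follows from the same combinatorics you sketch. So at the level of strategy you and the paper agree; the difference is that the paper outsources the hard step entirely to \cite{P3}, while you attempt to redo it.

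Where your unpacking breaks down is the sentence ``Iterated use of Proposition~\ref{w1} strips off the $\GGL_1$-factors.'' Proposition~\ref{w1} is a statement about parabolic induction in $G$ itself: it says that if $\rho\subset I^{G_{n+\ell}}_{\GGL_\ell\times G_n}(\tau\otimes\pi)$, then $\Theta(\rho)$ sits in a parallel parabolically induced representation. But the $\GGL_1(\mathbb{F}_{q^{d_i}})$ factors in $C(s)^F$ are not Levi factors of a parabolic of $\o^\epsilon_{2m}$; they live on the other side of the Lusztig bijection $\mathcal{L}$. Since $\pi$ is cuspidal it is not parabolically induced from anything, so Proposition~\ref{w1} has no purchase here. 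What one actually needs is a theorem saying that $\Theta$ and $\mathcal{L}$ commute in a controlled way—precisely the content of \cite[Theorems 6.9, 7.9]{P3}, not of \cite{P2} (which treats only unipotent characters for the symplectic/even-orthogonal pair). Your description of $C(s)^F$ also omits the unitary factors $U_{n_j}(\mathbb{F}_{q^{e_j}})$ that arise when Frobenius permutes eigenvalue pairs; these carry cuspidal unipotents for triangular $n_j$ and must be accounted for. Finally, the bound $n^\epsilon(\pi)\le m+\max(k_+,k_-)$ is not quite the right shape in case~(ii): for the odd-orthogonal/symplectic pair the unipotent first-occurrence jump is $+(k_++1)$ or $-k_+$ (cf.\ Theorem~\ref{spthetalift}), so one needs the sharper observation $k_+\le k-2$ rather than $k_+\le k-1$. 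All of this is handled uniformly by the cited theorems of \cite{P3}, which is why the paper simply quotes them.
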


To prove this lemma, we need to determine the first occurrence indices of  cuspidal representations.
We begin  with reviewing Lusztig's results \cite{L1} on the  unipotent cuspidal representations of finite classical groups.

\begin{theorem} \label{fun} The following groups

(i) $\UU_n$, $n=k(k+1)/2$,

(ii) $\sp_{2n}$, $n=k(k+1)$,

(iii) $\rm{SO}_{2n+1}$, $n=k(k+1)$,

(iv) $\rm{SO}^{\epsilon}_{2n}$, $n=k^2$, $\epsilon=\epsilon(k)$,
\\are the only groups in their respective Lie families which possess a unipotent  cuspidal  representation. In each case, the specified group $G$ has a unique irreducible unipotent  cuspidal  representation.
\end{theorem}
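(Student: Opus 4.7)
The plan is to invoke Lusztig's parameterization of irreducible unipotent representations of finite classical groups by symbols (by partitions in the unitary case), and read off cuspidality via a combinatorial criterion. For each of the four Lie families under consideration, the theorem will follow by enumerating the combinatorial objects satisfying the criterion and computing their rank.

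First I would set up the parameterization: irreducible unipotent representations of $\sp_{2n}\fq$ and $\so_{2n+1}\fq$ are in bijection with symbols $\Lambda = \binom{a_1 < \cdots < a_{m+d}}{b_1 < \cdots < b_m}$ of odd defect $d$ and rank $n$; those of $\so_{2n}^\epsilon\fq$ correspond to equivalence classes (under row-swap) of symbols of even defect $d$ with $\epsilon=+$ when $d\equiv 0\pmod 4$ and $\epsilon=-$ when $d\equiv 2\pmod 4$; and those of $\UU_n\fq$ are indexed by partitions of $n$. Second I would invoke Lusztig's characterization: since Harish-Chandra induction from a Levi of the form $\GGL_k\times G_{n-k}$ corresponds combinatorially to adding a $1$-hook to the symbol (resp. partition), a unipotent representation is cuspidal if and only if its symbol (resp. partition) is \emph{degenerate}, i.e. cannot be obtained by adding a $1$-hook to any symbol (resp. partition) of smaller rank. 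For symbols this forces both rows to consist of consecutive integers starting at $0$ with no entry in common, and since the defect determines the smaller row to be empty, the only candidates are the \emph{staircase symbols} $\binom{0, 1, \ldots, d-1}{-}$; for partitions the analogous condition forces the \emph{staircase partition} $(k, k-1, \ldots, 1)$.

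Third I would plug these into the rank formula $\mathrm{rank}(\Lambda)=\sum a_i+\sum b_j -\lfloor(a+b-1)^2/4\rfloor$. The staircase $\binom{0,1,\ldots,2k}{-}$ has defect $2k+1$ and rank $\tfrac{2k(2k+1)}{2}-k^2=k(k+1)$, producing the case $n=k(k+1)$ for $\sp_{2n}$ and $\so_{2n+1}$; the staircase $\binom{0,1,\ldots,2k-1}{-}$ has defect $2k$ and rank $k(2k-1)-k(k-1)=k^2$, producing $n=k^2$ for $\so_{2n}^\epsilon$ with $\epsilon=(-1)^k=\epsilon(k)$ from the mod $4$ rule; and the staircase partition gives $n=\tfrac{k(k+1)}{2}$ for $\UU_n$. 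Uniqueness in each case is automatic, since the staircase symbol (resp. partition) of a given defect/shape is unique.

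The main obstacle is the second step, namely justifying that degeneracy of the symbol is equivalent to cuspidality of the representation. This is the combinatorial heart of \cite{L1}: one has to identify Harish-Chandra induction with $1$-hook addition on symbols, which in turn relies on Lusztig's computation of the decomposition of $I^{G_n}_{\GGL_1\times G_{n-1}}(\mathbf{1}\otimes\sigma)$ for unipotent $\sigma$ in terms of symbol manipulations. Once this dictionary is in place, the rest of the argument is the elementary arithmetic bookkeeping outlined above.
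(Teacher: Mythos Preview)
The paper does not give its own proof of this statement; it is quoted as a result of Lusztig \cite{L1}. Your outline is essentially Lusztig's argument via the symbol/partition parameterization, and the rank computations for the staircase symbols are correct.

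Two small slips deserve mention. First, in the unitary case the relevant Levi subgroups are $\GGL_k(\bb{F}_{q^2})\times \UU_{n-2k}$, so Harish-Chandra induction from $\GGL_1(\bb{F}_{q^2})\times\UU_{n-2}$ corresponds to adding a \emph{$2$-hook} (a domino) to the partition, not a $1$-hook; every nonempty partition has a removable box, so your stated criterion would force $n=0$. The correct condition is that the partition be a $2$-core, which is precisely the staircase $(k,k-1,\ldots,1)$, and your conclusion $n=k(k+1)/2$ then stands. Second, for symbols the cuspidality condition is that no entry in either row can be decreased by $1$ while remaining a valid symbol, which forces each row to be an initial segment $\{0,1,\ldots\}$; the phrase ``no entry in common'' is not the operative constraint (the unreduced cuspidal symbols $\binom{0,\ldots,a-1}{0,\ldots,b-1}$ share many entries), though after passing to the reduced representative one row is indeed empty, as you say.
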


Recall from the Introduction that $\pi_{\sp_{2k(k+1)}}$, $\pi_{\rm{SO}^\epsilon_{2k(k+1)+1}}$, $\epsilon=\pm$, and $\pi_{\rm{SO}^{\epsilon(k)}_{2k^2}}$ stand for the unique irreducible unipotent cuspidal representations of the corresponding groups.  When the family of classical groups is specified and no confusion can arise, we will simply denote these representations by $\pi_k$, and also denote by  $\pi^+_k$, $\pi^-_k$ the two irreducible unipotent cuspidal representations of $\o^\epsilon_{2k(k+1)+1}$, $\epsilon=\pm$, or $\o^{\epsilon(k)}_{2k^2}$.

The set $\E(G)$ of irreducible characters of $G^F$ can be partitioned
by geometric conjugacy classes (see e.g. \cite{L2})
\[
\mathcal{E}(G)=\coprod_{s}\mathcal{E}(G,(s)),
\]
where $s=s_{G^{*}} $ runs over the semisimple conjugacy classes of the dual group $ G^{*}$. By \cite{L3} there is a bijection
\[
\mathcal{L}:\mathcal{E}(G,(s)) \to \mathcal{E}(C_{G^{*}}(s),(1)).
\]
Moreover if the identity components of the centers of $G$ and $C_{G^*}(s)$ have the same $\Fq$-rank, then $\pi\in \mathcal{E}(G,(s))$ is cuspidal if and only if  $\mathcal{L}(\pi)\in \mathcal{E}(C_{G^*}(s),(1))$ is cuspidal (see e.g. \cite[Chap. 9]{L1}).  The $\theta$-representations of $G^F$ introduced in \cite{LW1} are those which appear in $\mathcal{E}(G,(\theta))$ for  certain distinguished quadratic elements $\theta$ in various maximal tori of $G^*$. Based on Lusztig's results, we obtained in {\it loc. cit.} the following classification of cuspidal $\theta$-representations.

\begin{corollary} \label{ctheta}
The following groups

(i) $\UU_n$, $n=k(k+1)/2$,

(ii) $\rm{SO}_{2n+1}$, $n=k(k+1)$,

(iii) $\rm{SO}^{\epsilon}_{2n}$, $n=k^2$, $\epsilon=\epsilon(k)$,

(iv) $\sp_{2n}$, $n=k^2$,
\\
are the only groups in their respective Lie families which possess cuspidal $\theta$-representations. In the first three cases, the specified group $G$ has a unique irreducible cuspidal $\theta$-representation $\pi^\theta_k$. For symplectic groups, each $\sp_{2k^2}$ has two irreducible cuspidal $\theta$-representations $\pi^\theta_{k,\alpha}$ and $\pi^\theta_{k,\beta}$.
\end{corollary}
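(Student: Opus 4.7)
The plan is to reduce the classification of cuspidal $\theta$-representations to Lusztig's classification of cuspidal unipotent representations (Theorem \ref{fun}) via Jordan decomposition, and then enumerate the resulting possibilities case by case. By the Lusztig bijection $\mathcal{L}\colon \mathcal{E}(G,(\theta)) \to \mathcal{E}(C_{G^*}(\theta),(1))$ combined with the cuspidality criterion of \cite[Chap.~9]{L1}, provided the connected centers of $G$ and $C_{G^*}(\theta)$ share the same $\Fq$-rank, a cuspidal $\theta$-representation of $G^F$ corresponds to a cuspidal unipotent representation of $C_{G^*}(\theta)^F$. Thus the problem becomes: for which involutions $\theta \in G^{*F}$ does $C_{G^*}(\theta)^F$ appear in Lusztig's list of groups admitting a cuspidal unipotent representation?

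First I would analyze the structure of $C_{G^*}(\theta)$ in each Lie family. The $\pm 1$-eigenspace decomposition of the standard module of $G^*$ presents $C_{G^*}(\theta)$ as an extension by at most $\mathbb{Z}/2$ of a product $H_1 \times H_2$ of two smaller classical groups of the same Lie type as $G^*$. Imposing that both $H_i^F$ appear in Theorem \ref{fun} forces the dimensions of the eigenspaces, and combined with the selection of $\theta$ as a distinguished quadratic element in the sense of \cite{LW1}, pins down the numerical invariants of $G$ to exactly those listed in (i)--(iv): $n=k(k+1)/2$ for $\UU_n$, $n=k(k+1)$ for $\rm{SO}_{2n+1}$, $n=k^2$ with $\epsilon=\epsilon(k)$ for $\rm{SO}^{\epsilon}_{2n}$, and $n=k^2$ for $\sp_{2n}$. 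The $\Fq$-rank condition on the connected centers is straightforward to verify in each case, since the centers of unitary and (symplectic/orthogonal) classical groups are anisotropic or finite.

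For the first three families, the centralizer $C_{G^*}(\theta)$ is connected, so the uniqueness clause of Theorem \ref{fun} yields a single cuspidal unipotent representation of $C_{G^*}(\theta)^F$, hence a unique cuspidal $\theta$-representation $\pi^\theta_k$. The main obstacle is the symplectic family: for $\sp_{2k^2}$ one has $G^* = \rm{SO}_{2k^2+1}$ and the centralizer of the distinguished involution is $S(\mathrm{O}_1 \times \mathrm{O}^{\epsilon(k)}_{2k^2})$, which has identity component $\rm{SO}^{\epsilon(k)}_{2k^2}$ and component group $\mathbb{Z}/2$. Here I would invoke the refined Jordan decomposition for disconnected centralizers: the unique cuspidal unipotent of the identity component is the only such and hence is automatically stable under the $\mathbb{Z}/2$-action, so it admits exactly two extensions to $C_{G^*}(\theta)^F$, corresponding to the two characters of the component group, and these yield precisely the two cuspidal $\theta$-representations $\pi^\theta_{k,\alpha}$ and $\pi^\theta_{k,\beta}$ of $\sp_{2k^2}\fq$. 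The most delicate point is ruling out further cuspidal $\theta$-representations arising from other involution classes in the dual group; this is handled by exhausting the admissible eigenspace decompositions and verifying that all other centralizers fail the triangular/square constraints imposed by Theorem \ref{fun}.
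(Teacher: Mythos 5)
Your proposal is correct and takes essentially the same route as the paper (which quotes this classification from \cite{LW1}): reduce via the Lusztig bijection $\mathcal{L}:\mathcal{E}(G,(\theta))\to\mathcal{E}(C_{G^*}(\theta),(1))$ and the cuspidality criterion to Lusztig's list in Theorem \ref{fun}, with the disconnected centralizer $S(\mathrm{O}_1\times \mathrm{O}^{\epsilon(k)}_{2k^2})\cong \mathrm{O}^{\epsilon(k)}_{2k^2}$ in the symplectic case accounting for the two representations $\pi^\theta_{k,\alpha}$, $\pi^\theta_{k,\beta}$. The only remark is that for the distinguished $\theta$ of \cite{LW1} the centralizer in cases (i)--(iii) is the whole dual group ($\theta$ is central there), so the eigenspace enumeration you describe is only genuinely needed in the symplectic case, which you treat correctly.
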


The initial representations $\pi_{\o_2^-}^\pm$ and $\pi_{1,i}^\theta$, $i=\alpha, \beta$ have been specified in the Introduction. The recipe for labeling the whole chain of cuspidal unipotent and $\theta$-representations via conservations are given by the following two theorems. 

\begin{theorem}[\cite{AM}, Theorem 5.2] \label{even}
The theta correspondence for dual pairs $(\sp_{2n}, \o^\epsilon_{2n'})$ takes unipotent cuspidal representations to unipotent cuspidal representations as follows:

(i)  $(\sp_{2k(k+ 1)}, \o^{\epsilon(k)} _{2k^2})$,
\[
\Theta^{\epsilon(k)}_{k(k+1), k^2}: \pi_{\sp_{2k(k+1)}}\longrightarrow  \pi^{-}_{\rm{O}^{\epsilon(k)}_{2k^2}};
\]

(ii)  $(\sp_{2k(k+ 1)}, \o^{\epsilon(k+1)} _{2(k+1)^2})$,
\[
\Theta^{\epsilon(k+1)}_{k(k+1), (k+1)^2}: \pi_{\sp_{2k(k+1)}}\longrightarrow  \pi^{+}_{\rm{O}^{\epsilon(k+1)}_{2(k+1)^2}}.
\]
\end{theorem}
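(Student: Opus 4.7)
The plan is to establish both parts by analyzing the theta lifts of $\pi_{\sp_{2k(k+1)}}$ into the two even orthogonal Witt towers ${\bf O}^{\epsilon(k)}_{\rm even}$ and ${\bf O}^{\epsilon(k+1)}_{\rm even}$, exploiting (a) preservation of Lusztig series under theta correspondence, (b) preservation of cuspidality at first occurrence, and (c) the inductive definition of the labels $\pi^\pm$ via the chain of conservation relations anchored at $\pi^+_{\o_2^-}=\textrm{triv}$ and $\pi^-_{\o_2^-}=\textrm{sgn}$.

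First I would show that $n^{\epsilon(k)}(\pi_{\sp_{2k(k+1)}})=k^2$ and $n^{\epsilon(k+1)}(\pi_{\sp_{2k(k+1)}})=(k+1)^2$. Preservation of Lusztig series (cf.\ \cite{P2}) forces any lift of a unipotent representation to be unipotent. Combined with Proposition \ref{w1} and the cuspidality of $\pi_{\sp_{2k(k+1)}}$, a non-zero lift at first occurrence cannot be properly contained in a parabolically induced representation: a Jacquet module computation via \cite[Chap.\ 3, IV th.\ 5]{MVW} would then exhibit $\pi_{\sp_{2k(k+1)}}$ as a subrepresentation of some parabolic induction, violating cuspidality. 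Hence the first-occurrence lift is a cuspidal unipotent representation of $\o^\epsilon_{2n'}$, and by Theorem \ref{fun} such a representation exists only for $n'=k'^2$ with $\epsilon=\epsilon(k')$. Rank considerations together with the conservation relation (totalling the first occurrence indices across the two orthogonal towers of opposite discriminant) single out $k'=k$ and $k'=k+1$ as the two allowed values, and force non-vanishing at both.

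Second, the same Jacquet module argument shows that $\Theta^{\epsilon(k)}_{k(k+1),k^2}(\pi_{\sp_{2k(k+1)}})$ is an irreducible cuspidal unipotent representation of $\o^{\epsilon(k)}_{2k^2}$, hence equals $\pi^+_{\o^{\epsilon(k)}_{2k^2}}$ or $\pi^-_{\o^{\epsilon(k)}_{2k^2}}$; the analogous statement holds for the lift at $(k+1)^2$. To distinguish the two signs I would use the see-saw diagrams displayed in Section \ref{sec3}, with $G=\o^\epsilon_{2n}$, $H=\o^{\epsilon'}_{2n-1}\times\o^{\epsilon''}_1$, $G'=\sp_{2k(k+1)}$, $H'=\sp_{2k(k+1)}\times\sp_{2k(k+1)}$. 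The see-saw identity tethers the lift $\Theta^{\epsilon}_{k(k+1),n}(\pi_{\sp_{2k(k+1)}})$ on $\o^\epsilon_{2n}$ to the lift from $\o^{\epsilon'}_{2n-1}$, so that the labels propagate compatibly along the chain. Running induction on $k$ starting from $k=1$ (where the sign convention is fixed explicitly in the Introduction), the resulting chain of labels assigns $\pi^-$ to the lift at rank $k^2$ (coming from below) and $\pi^+$ to the lift at rank $(k+1)^2$ (coming from above), which is exactly the pattern in (i) and (ii).

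The main obstacle is the first step: proving that $k^2$ and $(k+1)^2$ are \emph{exactly} the first occurrence indices, rather than just upper bounds, requires either an explicit combinatorial calculation of the theta lift on Lusztig symbols (the route taken in \cite{AM}) or a sharp form of the conservation relation paired with tight rank/dimension estimates. The sign distinction of Step three is in spirit tautological because the labelling $\pi^\pm_{\o^{\epsilon(k)}_{2k^2}}$ for $k\geq 2$ is \emph{defined} by these conservation chains, but verifying internal consistency, i.e.\ that the see-saw identities do propagate $+$ and $-$ in the stated pattern rather than the opposite, demands careful bookkeeping of the two simultaneous lifts from each $\pi_{\sp_{2k(k+1)}}$.
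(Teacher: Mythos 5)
This statement is not proved in the paper at all: Theorem \ref{even} is quoted from Adams--Moy \cite{AM} (Theorem 5.2), with the signs fixed by the labelling conventions of the Introduction, so there is no internal argument to compare yours against. Judged on its own terms, your outline has the right skeleton (unipotence is preserved for symplectic/even-orthogonal pairs, the lift at first occurrence is cuspidal, hence by Theorem \ref{fun} the first occurrence must land on some $\rm{O}^{\epsilon(k')}_{2k'^2}$), but it has a genuine gap exactly where you flag it: nothing in your argument actually pins down the first occurrence indices as $k^2$ and $(k+1)^2$. The conservation relation only controls the \emph{sum} of the two first occurrences over the two even towers, and ``rank considerations'' do not single out the pair $(k,k+1)$: for instance for $k=3$ the constraint $k_1^2+k_2^2=k^2+(k+1)^2=25$ is also satisfied by $(k_1,k_2)=(0,5)$, and excluding such solutions requires genuine occurrence bounds or the explicit computation of the lift on Lusztig symbols, which is precisely the content of \cite{AM}. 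Likewise your claim that conservation ``forces non-vanishing at both'' presupposes already knowing one of the two indices, so the argument is circular at this point. As it stands, Step one is an assertion of the theorem rather than a proof of it.

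On the sign assignment: you are right that for $k\geq 2$ the labels $\pi^{\pm}_{\rm{O}^{\epsilon(k)}_{2k^2}}$ are \emph{defined} in this paper (following \cite{AM}) by the conservation chain anchored at $\pi^{+}_{\o_2^-}=\textrm{triv}$, $\pi^{-}_{\o_2^-}=\textrm{sgn}$, so once the first-occurrence pattern is known, parts (i) and (ii) for $k\geq 2$ amount to unwinding that definition together with a consistency check (each unipotent cuspidal representation of $\rm{O}^{\epsilon(k)}_{2k^2}$ receives its two lifts from $\sp_{2k(k-1)}$ and $\sp_{2k(k+1)}$ in the alternating pattern). But the base case is genuine content: one must identify the lift of $\pi_{\sp_4}$ to $\o_2^-$ as $\textrm{sgn}$ rather than $\textrm{triv}$, and your see-saw propagation is too vague to do this --- the see-saw you invoke involves odd orthogonal groups and Weil-representation twists that the even-orthogonal labelling does not refer to, and you never verify that it closes the induction in the stated direction. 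In short, the strategy is reasonable in outline, but the decisive quantitative input (the exact first occurrence indices, obtained in \cite{AM} by explicit computation) and the $k=1$ sign identification are missing; the paper itself simply cites \cite{AM} for both.
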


We remark that for symplectic and even orthogonal dual pairs, the above result of theta correspondence for unipotent cuspidal representations does not depend on the choice of $\psi$.

\begin{theorem}[\cite{LW1}, Theorem 3.12]\label{spthetalift}
Let $\pi^\theta_{k,i}$, $i=\alpha,\beta$ be the irreducible cuspidal $\theta$-representations of $\sp_{2k^2}\fq$, and let $n^{\epsilon}(\pi^\theta_{k,i})$ be the first occurrence index of $\pi^\theta_{k,i}$ in the Witt tower ${\bf O}^\epsilon_\rm{odd}$. Then  one has $n^{+}(\pi^\theta_{k,\alpha})=n^{-}(\pi^\theta_{k,\beta})=k(k-1)$ and $n^{-}(\pi^\theta_{k,\alpha})=n^{+}(\pi^\theta_{k,\beta})=k(k+1)$. The theta correspondence is given by
\begin{align*}
&\Theta^{+}_{k^2, k(k-1)}: \pi^\theta_{k,\alpha}\longrightarrow \pi_{k-1}^{\epsilon(k)},\quad \Theta^{-}_{k^2, k(k+1)}:\pi^\theta_{k,\alpha}\longrightarrow \pi_{k}^{\epsilon(k)},\\
& \Theta^{+}_{k^2, k(k+1)}: \pi^\theta_{k,\beta}\longrightarrow\pi_{k}^{\epsilon(k)},\quad \Theta^{-}_{k^2, k(k-1)}: \pi^\theta_{k,\beta}\longrightarrow\pi_{k-1}^{\epsilon(k)},
\end{align*}
where $\pi^\pm_k$ denotes the irreducible unipotent cuspidal representations of $\rm{O}^\epsilon_{2k(k+1)+1}$ given by (\ref{un}).
\end{theorem}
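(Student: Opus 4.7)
The plan is to prove Theorem \ref{spthetalift} by combining Lusztig's classification of $\mathcal{E}(G)$ into geometric series, Pan's description of the $(\mathrm{Sp}_{2n},\mathrm{O}^\epsilon_{2n'+1})$ theta correspondence on these series, and the conservation relation $n^+(\pi) + n^-(\pi) \geq 2k^2$ for cuspidal representations of $\mathrm{Sp}_{2k^2}$. Together with the base case $k=1$, which comes directly from the explicit structure of the Weil representation of $\mathrm{SL}_2$, and with the recursive labelling of $\pi^\theta_{k,\alpha},\pi^\theta_{k,\beta}$ for $k\ge 2$ through the chain of conservations recalled in the Introduction, these ingredients determine both $n^\pm(\pi^\theta_{k,i})$ and the theta lifts.

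Step 1 (Lusztig-series description). By Lusztig's Jordan decomposition and the discussion preceding Corollary \ref{ctheta}, $\pi^\theta_{k,\alpha}$ lies in a series $\mathcal{E}(\mathrm{Sp}_{2k^2},(\theta))$ for a distinguished quadratic semisimple class $\theta$ in the dual group $\mathrm{SO}_{2k^2+1}$, and its cuspidality forces the centralizer $C(\theta)$ to admit a cuspidal unipotent representation. Theorem \ref{fun} then pins down $C(\theta)\cong \mathrm{SO}_1\times \mathrm{SO}^{\epsilon(k)}_{2k^2}$, and the two conjugacy classes of such $\theta$ inside $\mathrm{SO}_{2k^2+1}^F$ account for the two cuspidal $\theta$-representations $\pi^\theta_{k,\alpha}$ and $\pi^\theta_{k,\beta}$. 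The unipotent cuspidal representations $\pi^\pm_{k}$ of $\mathrm{O}^\epsilon_{2k(k+1)+1}$ sit in the unipotent series attached to a similarly rigid Lusztig parameter.

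Step 2 (theta lift on Lusztig series, and nonvanishing). Apply Pan's transition rule for Lusztig parameters under the $(\mathrm{Sp}_{2n},\mathrm{O}^\epsilon_{2n'+1})$ theta correspondence, using Proposition \ref{w1} whenever it is necessary to move between a cuspidal representation and an irreducible constituent of a parabolic induction. The parameters force $\Theta^{+}_{k^2,k(k-1)}(\pi^\theta_{k,\alpha})$, when nonzero, to be a unipotent cuspidal representation of $\mathrm{O}^+_{2k(k-1)+1}$, and hence by Theorem \ref{fun} one of $\pi^\pm_{k-1}$. Nonvanishing at this level is established via the see-saw identity for the pair $(\mathrm{Sp}_{2k^2}\times\mathrm{Sp}_{2k^2},\mathrm{O}^+_{2k(k-1)+1})$ versus $(\mathrm{Sp}_{2k^2},\mathrm{O}^{\epsilon'}_{2k(k-1)}\times\mathrm{O}^{\epsilon''}_1)$ with $\epsilon'\epsilon''=+$: one reduces the nonvanishing of the lift at level $k(k-1)$ to the nonvanishing of a theta lift that is already covered by Theorem \ref{even}. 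Matching the recursive $\alpha/\beta$ labelling to the see-saw data singles out $\pi^{\epsilon(k)}_{k-1}$ as the image and gives $n^{+}(\pi^\theta_{k,\alpha})\le k(k-1)$.

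Step 3 (conservation and symmetry). The conservation relation now yields $n^{-}(\pi^\theta_{k,\alpha})\ge k(k+1)$. The symmetric computation on the $-$-tower, using the parallel see-saw with $\mathrm{O}^{-}_{2k(k+1)+1}$ on top, identifies $\Theta^{-}_{k^2,k(k+1)}(\pi^\theta_{k,\alpha})$ as $\pi^{\epsilon(k)}_{k}$, forcing equality in both conservation bounds. The statement for $\pi^\theta_{k,\beta}$ is obtained by interchanging $+$ and $-$; this interchange is exactly the recursive defining rule for $\alpha/\beta$ in \cite{LW1}, so the labelling is consistent at each stage. The base case $k=1$ anchors the whole induction: there $\pi^\theta_{1,\alpha}$ is by definition a constituent of $\omega_{\mathrm{SL}_2,\psi}$, i.e.\ of the Weil representation of $(\mathrm{Sp}_2,\mathrm{O}^{\epsilon_\psi}_1)$, giving $n^{\epsilon_\psi}(\pi^\theta_{1,\alpha})=0$ directly, and the see-saw in the $-$-tower applied to Theorem \ref{even} with $k=0$ yields the complementary index.

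The main obstacle is Step 2: the explicit verification that the Lusztig parameter of $\pi^\theta_{k,\alpha}$ is mapped, under Pan's transition rule for the symplectic/odd-orthogonal theta correspondence, to the Lusztig parameter of $\pi^{\epsilon(k)}_{k-1}$. One must track the quadratic twist $\chi_{\mathrm{GL}_\ell}$ from Proposition \ref{w1} and the discriminant conventions of \eqref{disc} carefully enough to distinguish $\alpha$ from $\beta$ and to pin down the correct sign $\epsilon(k)$ on the orthogonal side; once this matching is verified at $k=1$, the recursive $\alpha/\beta$ labelling propagates it to all $k$, completing the induction.
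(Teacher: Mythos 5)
First, note that this paper does not prove Theorem \ref{spthetalift} at all: it is quoted verbatim from \cite{LW1}, Theorem 3.12, and is used here as an external input (together with Theorem \ref{even}) for the descent computations. So there is no internal proof to compare your argument with; your proposal has to be judged as a reconstruction of the proof in \cite{LW1}, whose actual route goes through the compatibility of the Howe correspondence with the Lusztig correspondence (in the spirit of \cite{AMR}, \cite{P3}) together with the conservation relation of \cite{P1}, plus the explicit $k=1$ computation --- broadly the toolkit you invoke.

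As a proof, however, your outline has a genuine gap exactly where the content of the theorem lies, and you say so yourself: Step 2 defers the verification that the Lusztig/quadratic parameter of $\pi^\theta_{k,\alpha}$ is carried to that of $\pi^{\epsilon(k)}_{k-1}$ (rather than $\pi^{-\epsilon(k)}_{k-1}$), i.e.\ the sign $\epsilon(k)$ on the odd orthogonal side and the $\alpha/\beta$ distinction, which is precisely the data consumed later in Theorem \ref{main}. Without that matching, Steps 1--3 only yield that the first occurrences are at $k(k-1)$ and $k(k+1)$ with cuspidal unipotent images lying in $\{\pi^+,\pi^-\}$, not the stated identifications. Two further points need repair. (a) Your nonvanishing argument ``via the see-saw identity, reduced to Theorem \ref{even}'' is not innocuous: a see-saw identity converts the desired nonvanishing into the nonvanishing of a restriction multiplicity of the type $\langle \pi^{\eta}_{k}\otimes\omega,\,\cdot\,\rangle$, which is essentially the Bessel branching statement that the present paper proves \emph{using} Theorem \ref{spthetalift} (Proposition \ref{oo} and Section \ref{sec6}); as sketched, the argument risks circularity, and the safer route is the parameter-theoretic one (Lusztig-series compatibility plus conservation forces occurrence at $n'=k(k-1)$ in one tower and $k(k+1)$ in the other, with cuspidality of the lifts). (b) In Step 1, the two cuspidal $\theta$-representations of $\mathrm{Sp}_{2k^2}(\mathbb{F}_q)$ do not come from two rational classes of $\theta$: the class whose $(-1)$-eigenspace has type $-\epsilon(k)$ has centralizer $\mathrm{O}^{-\epsilon(k)}_{2k^2}$, which carries no cuspidal unipotent representation; the two representations $\pi^\theta_{k,\alpha},\pi^\theta_{k,\beta}$ arise from the two cuspidal unipotent representations $\pi^{\pm}$ of the disconnected centralizer $\mathrm{O}^{\epsilon(k)}_{2k^2}$ in the single relevant series. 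Finally, keep in mind that for $k\geq 2$ the $\alpha/\beta$ labels are \emph{defined} by the chain of conservation relations, so a correct write-up must separate what is convention from what is actually proved (the indices, the cuspidality and the sign $\epsilon(k)$ of the lifts, and the $k=1$ anchor against the discriminant convention for $\mathrm{O}^{\pm}_1$ and $\psi$ versus $\psi'$).
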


Lemma \ref{4}  follows immediately from Theorem \ref{even},  Theorem \ref{spthetalift} and \cite[Theorem 6.9 and Theorem 7.9]{P3}.

\section{Bessel case of Theorem \ref{main}}\label{sec5}
In this section we study the branching of unipotent cuspidal representations of finite orthogonal groups. We
will prove the following result, which is the Bessel case of Theorem \ref{main}.
\begin{theorem}\label{be}
(i) For an irreducible unipotent cuspidal representation  $\pi^\eta_{\rm{O}^\epsilon_{2k(k+1)+1}}$ of $\rm{O}^\epsilon_{2k(k+1)+1}\fq$, one has $\ell_0^\rm{B}(\pi^\eta_{\rm{O}^\epsilon_{2k(k+1)+1}})=k$ and 
 \[
 \CD^\rm{B}_{k, v_0}(\pi^\eta_{\rm{O}^{\epsilon}_{2k(k+1)+1}})=\pi^{\eta \cdot \epsilon(k)}_{\rm{O}^{\epsilon(k)}_{2k^2}}, 
\]
where $Q(v_0)= \epsilon \cdot \epsilon(k)$.

(ii) For an irreducible unipotent cuspidal representation $\pi^\eta_{\rm{O}^{\epsilon(k)}_{2k^2}}$ of $\rm{O}^{\epsilon(k)}_{2k^2}\fq$,  one has 
$\ell_0^\rm{B}(\pi_{\rm{O}^{\epsilon(k)}_{2k^2}})=k-1$ and
\[
 \CD^\rm{B}_{k-1, v_0}(\pi^\eta_{\rm{O}^{\epsilon(k)}_{2k^2}})=\pi^{\eta\cdot \epsilon(k-1)}_{\rm{O}^{\epsilon}_{2k(k-1)+1}},
\]
where $Q(v_0)=\epsilon_{-1}\cdot \epsilon\cdot\epsilon(k)$.\end{theorem}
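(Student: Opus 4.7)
The plan is to combine see-saw dual pair identities with the explicit theta correspondence for unipotent cuspidal and cuspidal $\theta$-representations (Theorems \ref{even} and \ref{spthetalift}), the compatibility of theta lifting with parabolic induction (Proposition \ref{w1}), and the first-occurrence upper bound of Lemma \ref{4}. I will sketch the proof of part (i); part (ii) follows by a symmetric argument with the roles of odd- and even-dimensional orthogonal Witt towers interchanged.

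First I would establish the upper bound $\ell_0^\mathrm{B}(\pi^\eta_{\o^\epsilon_{2k(k+1)+1}})\leq k$. Suppose for contradiction that $\CQ^\mathrm{B}_{\ell,v_0}(\pi)$ contains an irreducible representation $\pi'$ of $\o(W)$ for some $\ell>k$, so that $\dim W<2k^2$. Theorem \ref{fun} precludes $\pi'$ from being a unipotent cuspidal representation; combining the cuspidality of $\pi$ with the Harish-Chandra theory of Section \ref{sec2} and Proposition \ref{w1} forces $\pi'$ itself to be cuspidal (else the cuspidal support of $\pi'$ would pull back to a cuspidal support of $\pi$ in a proper parabolic); and Lemma \ref{4}, applied to the images under theta correspondence in the symplectic tower, rules out the remaining non-unipotent cuspidal possibilities using the identification $\Theta^\epsilon_{k(k+1),k^2}(\pi)=\pi^\theta_{k,i}$ from Theorem \ref{spthetalift}.

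Next I would identify $\CD^\mathrm{B}_{k,v_0}(\pi)$ via a see-saw argument. Decomposing $V_{2k(k+1)+1}=X_k\oplus W\oplus \langle v_0\rangle\oplus X_k^\vee$, the see-saw diagram
\[
\begin{array}{ccc}
\o^\epsilon_{2k(k+1)+1} & & \sp_{2k^2}\times\sp_{2k^2} \\
\cup & & \cup \\
\o^{\epsilon(k)}_{2k^2}\times \o^{\epsilon\cdot\epsilon(k)}_{2k+1} & & \sp_{2k^2}
\end{array}
\]
(with the bottom-left embedded via \eqref{disc}) converts the pure restriction to $\o^{\epsilon(k)}_{2k^2}\times\o^{\epsilon\cdot\epsilon(k)}_{2k+1}$ into a pairing on the symplectic side governed by theta lifts. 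To promote this from direct-product restriction to the full Bessel pairing with the semidirect product $H=\o(W)\ltimes N_{\underline{p}_k}$, I would combine the see-saw with Proposition \ref{w1} applied to the chain of parabolic inductions that builds $\o^{\epsilon\cdot\epsilon(k)}_{2k+1}$ out of $\o^{\epsilon\cdot\epsilon(k)}_1$ by successive $\mathrm{GL}_1$-inductions, each step absorbing one layer of the $\mathrm{GL}_1^k$-Whittaker character $\psi_k$ appearing in $\psi_{\underline{p}_k,v_0}$. The resulting multiplicity takes the form $\langle \pi^\theta_{k,i},\pi^\theta_{k,j}\rangle_{\sp_{2k^2}(\Fq)}$, where by Theorem \ref{spthetalift} the index $i$ is determined by $(\eta,\epsilon)$, and by Theorem \ref{even} (together with the $\mathrm{GL}_k$-Whittaker on both sides produced by Proposition \ref{w1}) the index $j$ is determined by $(\eta',\epsilon(k))$; Schur's lemma then forces $i=j$, i.e.\ $\eta'=\eta\cdot\epsilon(k)$.

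The main obstacle is the second step: the standard see-saw identity only yields direct-product restriction information, whereas the Bessel subgroup $H$ is a genuine semidirect product. Bridging this gap requires the iterative use of Proposition \ref{w1} together with Frobenius reciprocity to absorb the $U_{\mathrm{GL}_k}$-Whittaker contribution into the symplectic theta lift, and a careful tracking of the discriminant condition $Q(v_0)=\epsilon\cdot\epsilon(k)$ to ensure the correct orthogonal Witt tower on the descent side via \eqref{disc}. Once the analogous setup is put in place for part (ii) (with $Q(v_0)=\epsilon_{-1}\cdot\epsilon\cdot\epsilon(k)$ and the see-saw swapping the roles of odd and even orthogonal groups), the sign shift $\eta'=\eta\cdot\epsilon(k-1)$ drops out of the same conservation-relation bookkeeping fixed in the Introduction via \eqref{un}.
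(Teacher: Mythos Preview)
Your overall strategy---see-saw plus explicit theta lifts---is indeed what drives the proof, but there is a genuine gap at precisely the point you flag as the ``main obstacle,'' and the paper resolves it by a mechanism you do not invoke. The key missing ingredient is the reduction of the Bessel multiplicity to a \emph{basic} (codimension-one) restriction multiplicity: by adapting \cite[Theorem~15.1]{GGP1} to the finite-field setting (this is the paper's Proposition~\ref{so1} and its extension Proposition~\ref{7.21}), one shows that for $\pi$ unipotent and $\tau$ a suitable non-unipotent cuspidal representation of $\GGL_\ell$,
\[
m(\pi,\pi')=\langle I^{\o^{\epsilon'}_{n+1}}_{\GGL_\ell\times\o^{\epsilon'}_m}(\tau\otimes\pi'),\ \pi\rangle_{\o^\epsilon_n(\Fq)}.
\]
This equality is what converts the semidirect product $H=\o(W)\ltimes N_{\underline{p}_\ell}$ with its Whittaker data into a pure restriction problem amenable to see-saw. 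Your proposed route---iterating Proposition~\ref{w1} to ``absorb'' the $U_{\GGL_k}$-Whittaker into the symplectic side---does not do this: Proposition~\ref{w1} governs how theta lifting interacts with parabolic induction, not with twisted Jacquet functors, and there is no evident way to make the Whittaker character disappear into a chain of $\GGL_1$-inductions without the Bernstein--Zelevinsky-type analysis underlying the GGP reduction. Correspondingly, the see-saw the paper actually uses has $\o^{\epsilon''}_1$ (not $\o_{2k+1}$) as the small orthogonal factor, applied \emph{after} the reduction to the basic case.

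A second, smaller gap is in your upper-bound step: the assertion that cuspidality of $\pi$ forces any constituent $\pi'$ of $\CQ^\mathrm{B}_{\ell,v_0}(\pi)$ to be cuspidal is not correct as stated---the Bessel quotient does not commute with Harish-Chandra induction in the way you suggest. In the paper this is again handled through the reduction to the basic case: once $m(\pi,\pi')$ is rewritten as a parabolic-induction pairing, the vanishing for all $\pi'$ (cuspidal or not) follows from a uniform see-saw argument (Proposition~\ref{o2}) controlling Harish-Chandra series via Proposition~\ref{o3}, together with Lemma~\ref{4}. The non-vanishing for the distinguished $\pi'$ (Proposition~\ref{oo}) is then proved by an induction on $k$, again via see-saw with $\o_1$ factors.
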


\subsection{Reduction to the basic case} We first show that the parabolic induction preserves multiplicities, and thereby make a reduction to the basic case.
 From \cite[Proposition 5.2]{LW1}, we know that the parabolic induction preserves multiplicities between unipotent representations of unitary groups. Namely,
\[
\langle \pi\otimes\bar{\nu}, \pi'\rangle _{H(\Fq)}=\langle I^{\UU_{n+1}}_{P}(\tau\otimes\pi'),\pi\rangle _{\UU_{n}(\Fq)}
\]
for irreducible unipotent representations $\pi$ and $\pi'$ of $\UU_n(\Fq)$ and $\UU_m(\Fq)$ respectively, where $P$ is an $F$-stable parabolic subgroup of $\UU_{n+1}$ with Levi factor $L^F\cong\GGL_\ell(\bb{F}_{q^2})\times \UU_{n+1-2\ell}\fq$, and $\tau$ is an irreducible cuspidal representation of $\GGL_\ell(\bb{F}_{q^2})$. In the same manner, we have the following analog for orthogonal groups with $\pi$ unipotent, which reduces the calculation to the basic case.

\begin{proposition}\label{so1}
Let $\pi$ be an irreducible unipotent representation of $\rm{SO}^\epsilon_n(\Fq)$, and $\pi'$ be an irreducible representation of $\rm{SO}^{\epsilon'}_m(\Fq)$ with $n > m$, $n\equiv m+1$ mod $2$. Let $P$ be an $F$-stable maximal parabolic subgroup of $\rm{SO}^{\epsilon'}_{n+1}$ with Levi factor $\GGL_{\ell} \times \rm{SO}^{\epsilon'}_{m}$, $\ell=(n+1-m)/2$, and $\tau$ be an irreducible cuspidal representation of $\GGL_{\ell}\fq $ which is nontrivial if $\ell=1$. Then we have
\begin{equation}\label{so11}
m(\pi, \pi')=\langle \pi\otimes \bar{\nu}, \pi'\rangle _{H(\Fq)}=\langle I^{\rm{SO}^\e_{n+1}}_{P}(\tau\otimes\pi'),\pi\rangle _{\rm{SO}^\epsilon_{n}(\Fq)},
\end{equation}
where the data $(H,\nu)$ is given by (\ref{hnu}).
\end{proposition}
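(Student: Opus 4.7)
The plan is to apply Mackey's formula to the restriction of the parabolically induced representation $I_P^{G}(\tau \otimes \pi')$ from $G := \rm{SO}^{\e'}_{n+1}$ to $G_n := \rm{SO}^{\e}_n$, where we realize $G_n$ as the stabilizer in $G$ of an anisotropic vector $v_1 \in V_{n+1}$ satisfying $Q(v_1) = -Q(v_0)$ (chosen so that discriminants match via \eqref{disc}). Then $V_n = v_1^\perp$, and Frobenius reciprocity together with Mackey gives
\[
\langle I_P^{G}(\tau \otimes \pi'), \pi\rangle_{G_n} = \bigoplus_{\mathcal{O}} \langle (\tau \otimes \pi')^w, \pi\rangle_{G_n \cap wPw^{-1}},
\]
where $\mathcal{O}$ ranges over $G_n$-orbits on the isotropic $\ell$-Grassmannian $G/P$ with representative $w$.

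Because $V_n$ has codimension one in $V_{n+1}$, the invariant $\dim(X_\ell \cap V_n) \in \{\ell-1, \ell\}$ distinguishes exactly two $G_n$-orbits: the closed orbit with $X_\ell \subset V_n$, and the open orbit where $X_\ell \cap V_n$ has codimension one in $X_\ell$. For the closed orbit, the stabilizer in $G_n$ is a proper parabolic $P_c$ of $G_n$ with Levi $\GGL_\ell \times \rm{SO}(W_c)$ for a suitable hyperplane $W_c$ of $V_m$, and the contribution is the standard parabolic induction $I_{P_c}^{G_n}(\tau \otimes \pi'|_{\rm{SO}(W_c)})$. Its irreducible constituents lie in a Harish-Chandra series whose cuspidal support contains the cuspidal $\tau$ of $\GGL_\ell$. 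Since $\tau$ is non-unipotent (being cuspidal on $\GGL_\ell$ with $\ell \geq 2$, or nontrivial when $\ell = 1$), orthogonality of Harish-Chandra series together with the unipotency of $\pi$ forces this pairing to vanish.

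All the multiplicity therefore concentrates in the open orbit, which we represent by $X_\ell = Y_{\ell-1} \oplus \Fq(v_0 + v_1)$, with $Y_{\ell-1}$ the isotropic $(\ell-1)$-subspace of $V_n$ occurring in the Bessel data. A short calculation shows that the stabilizer $S_o \subset G_n$ is the subgroup of the parabolic $Q_{\ell-1}$ of $G_n$ fixing $Y_{\ell-1}$ that further satisfies $g(v_0) \equiv v_0 \pmod{Y_{\ell-1}}$, giving the Levi decomposition $S_o = (\GGL(Y_{\ell-1}) \times \rm{SO}(W)) \ltimes U_{Q_{\ell-1}}$, where $W = v_0^\perp$ in $V_{m+1}$. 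The Bessel subgroup $H = \rm{SO}(W) \ltimes N_{\UUl{p}_{\ell-1}}$ sits naturally inside $S_o$, obtained by replacing the Levi factor $\GGL(Y_{\ell-1})$ with its upper-unipotent subgroup $U_{\GGL_{\ell-1}}$.

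To identify the open-orbit contribution with the Bessel multiplicity, we invoke the Gelfand-Kazhdan description of the restriction of the cuspidal $\tau$ to the mirabolic subgroup $P_{\rm mir} \subset \GGL(X_\ell)$ (stabilizer of the line $\Fq(v_0+v_1)$), namely $\tau|_{P_{\rm mir}} \cong \rm{Ind}_{U_{\GGL_\ell}}^{P_{\rm mir}}(\psi_W)$ for the standard Whittaker character $\psi_W$. Substituting into the open-orbit Hom space and performing induction in stages from $S_o$ down to $H$, the Whittaker character on $U_{\GGL_{\ell-1}}$ combines with the character of $U_{Q_{\ell-1}}$ induced by pairing against $v_0$ to produce exactly the Bessel character $\nu = \psi_{\UUl{p}_{\ell-1}, v_0}$, yielding
\[
\langle I_P^G(\tau \otimes \pi'), \pi\rangle_{G_n} = \langle \pi \otimes \bar\nu, \pi'\rangle_{H(\Fq)} = m(\pi, \pi').
\]
The hardest step is the last one: tracking the explicit identifications of unipotent radicals and matching the Whittaker piece on $\GGL_{\ell-1}$ with the $v_0$-twist on $U_{Q_{\ell-1}}$ so as to recover precisely the Bessel character defined in the paper.
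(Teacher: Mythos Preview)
Your proposal is correct and follows essentially the same route as the paper. The paper's own proof simply defers to \cite[Theorem 15.1]{GGP1}, whose argument is precisely the Mackey decomposition you outline: two $G_n$-orbits on $G/P$, the closed-orbit term $\langle I^{\rm{SO}^\epsilon_n}_{P'}(\tau\otimes(\pi'|_{\rm{SO}^\epsilon_{m-1}})),\pi\rangle$ killed by comparing Harish-Chandra series (since $\pi$ is unipotent while the cuspidal $\tau$ on $\GGL_\ell$ is not), and the open-orbit term identified with the Bessel multiplicity via the mirabolic restriction $\tau|_R\cong \rm{Ind}^R_U\chi$. Your description of the open-orbit stabilizer $S_o=(\GGL(Y_{\ell-1})\times\rm{SO}(W))\ltimes U_{Q_{\ell-1}}$ and of the passage to $H$ by Gelfand--Kazhdan is exactly what the paper carries out more explicitly in the proof of the next proposition (Proposition \ref{7.21}), where the group $Q$ there is your $S_o$.
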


\begin{proof}
It can be proved in the same way as \cite[Theorem 15.1]{GGP1}. The assumption of $\pi$ in \cite[Theorem 15.1]{GGP1} was used to obtain the following statement: for an $F$-stable maximal parabolic subgroup $P'$ of $\rm{SO}^\epsilon_{n}$ with Levi factor $\GGL_{(n+1-m)/2} \times \rm{SO}^\epsilon_{m-1}$ ,
\[
\langle I^{\rm{SO}^\epsilon_{n}}_{P'}\left(\tau\otimes(\pi'|_{\rm{SO}^\epsilon_{m-1}(\Fq)})\right),\pi\rangle _{\rm{SO}^\epsilon_{n}(\Fq)}=0.
\]
Since in our case $\pi$ is unipotent, this  multiplicity is nonzero
only if $\tau$ and $\pi'|_{\rm{SO}^\epsilon_{m-1}(\Fq)}$ are both unipotent. It is well-known that $\GGL_\ell(\bb{F}_{q})$ has no unipotent cuspidal representations if $\ell>1$. By the assumption on $\tau$,
 it is not unipotent. Therefore the above multiplicity is zero.
The rest of the proof is the same as that of \cite[Theorem 15.1]{GGP1}.
\end{proof}

For later use, we generalize Proposition \ref{so1} as follows.

\begin{proposition} \label{7.21}
Let $\pi$ be an irreducible unipotent representation of $\rm{SO}^\epsilon_n(\Fq)$, and $\pi'$ be a  representation of $\rm{SO}^{\epsilon'}_m(\Fq)$ with $n > m$, $n\equiv m+1$ mod $2$. Let $P$ be an $F$-stable maximal parabolic subgroup of $\rm{SO}^{\epsilon'}_{n+1}$ with Levi factor $\GGL_{\ell} \times \rm{SO}^{\epsilon'}_{m}$, $\ell=(n+1-m)/2$. Let $\tau_1$ (resp. $\tau_2$) be an irreducible cuspidal representations of $\GGL_{\ell'}(\bb{F}_{q})$ (resp. $\GGL_{\ell-\ell'}(\bb{F}_{q}))$, $\ell'\leq \ell$, which is nontrivial if $\ell'=1$ (resp.  $\ell-\ell'=1$), and
\[
\tau=I_{\GGL_{\ell'}\times  \GGL_{\ell- \ell'}}^{\GGL_\ell}(\tau_1\times\tau_2).
\]
Then we have
\[
m(\pi, \pi')=\langle \pi\otimes \bar{\nu}, \pi'\rangle _{H(\Fq)}=\langle I^{\so^\e_{n+1}}_{P}(\tau\otimes\pi'),\pi\rangle _{\so^\epsilon_{n}(\Fq)},
\]
where the data $(H,\nu)$ is given by (\ref{hnu}).
\end{proposition}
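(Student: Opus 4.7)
The plan is to adapt the argument of Proposition~\ref{so1} (which itself adapts \cite[Theorem~15.1]{GGP1}) essentially verbatim, exploiting the fact that the only feature of $\tau$ used there is that no constituent of $\tau$ lies in the unipotent Harish-Chandra series of $\GGL_\ell\fq$, and this persists for $\tau=I^{\GGL_\ell}_{\GGL_{\ell'}\times\GGL_{\ell-\ell'}}(\tau_1\otimes\tau_2)$ under our hypotheses. Since both sides of the desired identity are additive in $\pi'$, there is no loss of generality in treating $\pi'$ as irreducible.

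First I would use induction in stages to rewrite
\[
I^{\so^{\e}_{n+1}}_{P}(\tau\otimes\pi')=I^{\so^{\e}_{n+1}}_{P''}(\tau_1\otimes\tau_2\otimes\pi'),
\]
where $P''$ is the standard parabolic of $\so^{\e}_{n+1}$ with Levi $\GGL_{\ell'}\times\GGL_{\ell-\ell'}\times\so^{\e}_m$. The problem thereby reduces to a ``two-step-Levi'' analogue of Proposition~\ref{so1} with inducing datum $\tau_1\otimes\tau_2\otimes\pi'$.

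Next I would replay the Mackey-theoretic computation from the proof of Proposition~\ref{so1}: decompose the restriction of $I^{\so^{\e}_{n+1}}_{P''}(\tau_1\otimes\tau_2\otimes\pi')$ to $\so^{\epsilon}_n\fq$ along the $\so^{\epsilon}_n$-orbits on $\so^{\e}_{n+1}/P''$. The open orbit yields $\rm{Ind}^{\so^{\epsilon}_n\fq}_{H\fq}(\pi'\otimes\nu)$ (up to the standard modular twist), which pairs with $\pi$ by Frobenius reciprocity to recover exactly $m(\pi,\pi')$. Each remaining orbit produces a term of the form $I^{\so^{\epsilon}_n}_{P'''}(\sigma_1\otimes\sigma_2\otimes\rho)$, where $P'''$ has Levi of shape $\GGL_{a_1}\times\GGL_{a_2}\times\so^{\epsilon}_{m'}$ with $a_1+a_2>0$, each $\sigma_i$ is built from $\tau_i$ via contragredient, character twist, and possibly further parabolic induction, and $\rho$ is a Jacquet constituent of $\pi'$.

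Finally I would invoke the unipotency of $\pi$ to kill every non-open orbit contribution: for such a pairing to be nonzero, $\sigma_1\otimes\sigma_2\otimes\rho$ must lie in the unipotent Harish-Chandra series of the Levi of $P'''$, which forces each $\sigma_i$ (and therefore each $\tau_i$) to be unipotent. However, the only unipotent cuspidal of any $\GGL_r\fq$ is the trivial character of $\GGL_1$, and our hypotheses explicitly exclude $\tau_i=\mathrm{triv}$ when the rank equals one. Every non-open orbit therefore contributes zero, and the identity $m(\pi,\pi')=\langle I^{\so^{\e}_{n+1}}_{P}(\tau\otimes\pi'),\pi\rangle_{\so^\epsilon_n\fq}$ follows. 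The main technical obstacle is the combinatorial bookkeeping required to verify that every non-open orbit does retain a $\GGL$-factor on which $\tau_1$ or $\tau_2$ (or some image thereof) remains visible, so that the unipotency obstruction bites; this is the same kind of Bruhat/Jacquet analysis carried out in the proof of Proposition~\ref{so1}, made only marginally more elaborate by the presence of two $\GGL$-blocks in $P''$ instead of one.
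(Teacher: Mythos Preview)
Your strategy is correct and is essentially the paper's, but the paper organizes the computation differently and is more explicit about the key mechanism. Rather than passing to the smaller parabolic $P''$ and running Mackey on $\so^\epsilon_n\backslash\so^{\e}_{n+1}/P''$, the paper keeps the maximal parabolic $P=P(Y)$ and, quoting \cite[Theorem~15.1]{GGP1}, reduces the identity to
\[
\langle\pi,\rm{Ind}^{\so(V)}_Q(\tau|_R\otimes\pi')\rangle_{\so(V)}=\langle\pi,\rm{Ind}^{\so(V)}_Q(\rm{Ind}^R_U\chi\otimes\pi')\rangle_{\so(V)},
\]
where $R\subset\GGL(Y)$ is the mirabolic subgroup and $Q=(\GGL(X)\times\so(W))\ltimes N_V(X)$. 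The point is that your claim ``the open orbit yields $\rm{Ind}^{\so^\epsilon_n}_H(\pi'\otimes\nu)$'' is one step too fast: the open $\so^\epsilon_n$-orbit on $\so^{\e}_{n+1}/P$ contributes $\rm{Ind}^{\so(V)}_Q((\tau|_R)\otimes\pi')$, and the Bessel model is only the top-derivative piece of this. The paper then invokes the Bernstein--Zelevinsky filtration (cf.\ \cite[Corollary~4.3]{GGP2}) to decompose
\[
\tau|_R=\rm{Ind}^R_U\chi\ +\ \rm{Ind}^R_{R^\ell_{\ell'}}(\tau_1\otimes\psi_{\ell-\ell'})\ +\ \rm{Ind}^R_{R^\ell_{\ell-\ell'}}(\tau_2\otimes\psi_{\ell'}),
\]
so that exactly \emph{two} extra terms appear (versus one in Proposition~\ref{so1}); each, after inducing to $\so(V)$, factors through a genuine parabolic of $\so^\epsilon_n$ with $\tau_1$ or $\tau_2$ on the $\GGL$-block, and hence pairs to zero against the unipotent $\pi$. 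This BZ step is precisely the ``combinatorial bookkeeping'' you flag as the main obstacle, and the paper's route has the advantage that the pieces are enumerated explicitly rather than hidden inside an orbit count on the smaller flag variety.
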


\begin{proof}
It can be proved in the same way as \cite[Theorem 15.1]{GGP1}, where it was established for non-archimedean local fields, and the proof works for finite fields as well. We follow the notations in \cite{GGP1}.  Let $V$ be an $n$-dimensional space over $\Fq$ with a non-degenerate symmetric bilinear form $(,)$, which defines the special orthogonal group $\so(V)=\so^\epsilon_n\fq$ and $W \subset V$ be an $m$-dimensional non-degenerate orthogonal subspace, so that
\[
W^\bot=X+X^\vee+E
\]
where $E = \bb{F}_{q} \cdot e$ is an anisotropic line and
\[
X=\langle v_1,\ldots,v_{\ell-1}\rangle
\]
is an isotropic subspace
with $\dim X = \ell-1$ and $X^\vee$ is the dual of $X$. Let
\[
E^- = \bb{F}_{q} \cdot f
\]
denote the rank 1 space equipped with a form which is the negative of that on $E$, so that $E + E^-$ is a split rank 2 space. The two isotropic lines in $E + E^-$  are spanned by
\[
v = e + f \quad \textrm{and}\quad v' =\frac{1}{2(e,e)} (e - f).
\]
Now consider the space
\[
W' = V \oplus E^-
\]
which contains $V$ with codimension 1 and isotropic subspaces
\[
Y = X +\bb{F}_q\cdot v\quad \textrm{and}\quad Y ^\vee= X^\vee +\bb{F}_q\cdot v'.
\]
Hence we have
\[
W' = Y + Y^\vee + W.
\]
Let $P = P (Y )$ be the parabolic subgroup of $\so(W')$ stabilizing $Y $ and let $M(Y)$ be its Levi subgroup stabilizing $Y$ and $Y^\vee$. Then $\so(W')=\so^\e_{n+1}\fq$ and $M(Y)=\GGL_\ell\fq \times \so^\e_m\fq$. Let $P_V (X )$ be the parabolic subgroup of $\so(V)$ stabilizing $X$, so that
\[
P_V(X) = M_V(X) \ltimes N_V(X)
\]
where $N_V(X)$ is the unipotent radical of $P_V(X)$. Let $Q$ be a subgroup of $P_V (X)$ given by
\[
Q=(\GGL(X)\times \so(W))\ltimes N_V(X).
\]
As in the proof \cite[Theorem 15.1]{GGP1}, one has the following commutative diagram with exact rows
\[
\xymatrix
{
0  \ar[r] & N(Y)  \ar[r]  & P(Y)  \ar[r]  & \GGL(Y)\times \so(W) \ar[r] & 0\\
0  \ar[r] & N(Y)\cap Q  \ar[r] \ar[u]  &Q  \ar[r] \ar[u] & R\times \so(W) \ar[u] \ar[r] & 0
}
\]
where
$
R\subset \GGL(Y)
$
is the mirabolic subgroup which stabilizes the subspace $X \subset Y$ and fixes $v$ modulo $X$. Note also that $N(Y ) \cap Q \subset N_V(X)$ and
\[
N_V (X)/(N(Y) \cap Q)  \cong \rm{Hom}(E, X).
\]
As a consequence, one has
\[
(\tau\otimes\pi')|_Q=\tau|_R\otimes \pi'.
\]
By the proof of \cite[Theorem 15.1]{GGP1}, it suffices to show that
\[
\langle\pi,\rm{Ind}^{\so(V)}_{Q}(\tau|_R\otimes \pi')\rangle_{\so(V)}=\langle\pi,\rm{Ind}^{\so(V)}_{Q}(\rm{Ind}^R_U\chi\otimes \pi')\rangle_{\so(V)}
\]
where $U$ is the unipotent radical of the Borel subgroup of $\GGL(Y )$ stabilizing the flag
\[
\langle v_1\rangle\subset\langle v_1,v_2\rangle\subset\cdots\subset\langle v_1,\ldots,v_{\ell-1},v\rangle=Y,
\]
and $\chi$ is any generic character of $U$.

Let $N_n$ be the group of upper triangular unipotent matrices in $\GGL_n(\bb{F}_{q})$. Recall that  $\psi$ is a fixed nontrivial additive character of \
$\bb{F}_{q}$. Let $\psi_n$ be the character of $N_n$ given by
\[
\psi_n(u) = \psi(u_{1,2} + u_{2,3} + \ldots + u_{n-1,n}).
\]
Let $R^n_i$ be the subgroup of $\GGL_n(\bb{F}_{q})$ consisting of matrices
\[
 \left(\begin{matrix}
   g      & v  \\
   0      & z
  \end{matrix}\right)
 \]
 with $g\in \GGL_{i}(\bb{F}_{q})$, $v\in M_{i\times (n-i)}\fq$, $z\in N_{n-i}\fq$, so that $R^n_i=\GGL_{i}(\bb{F}_{q})\ltimes V_{n-i}$, where $V_{n-i}$ is the unipotent radical of $R^n_i$.

By the theory of Bernstein-Zelevinsky derivatives (c.f. \cite[Corollary 4.3]{GGP2}),
\[
\tau|_R=\rm{Ind}^R_U\chi+\rm{Ind}^R_{R^\ell_{\ell'}}\tau_1\otimes\psi_{\ell-\ell'}+\rm{Ind}^R_{R^\ell_{\ell-\ell'}}\tau_2\otimes\psi_{\ell'}.
\]
Let $Q'$ be the subgroup of $Q$ given by
\[
Q'=(R^{\ell-1}_{\ell'}\times \so(W))\ltimes (N(Y) \cap Q).
\]
Then there is an $F$-stable maximal parabolic subgroup $P_{\ell'}$ of $\so^\epsilon_{n}$ with Levi factor $\GGL_{\ell'} \times \so^\epsilon_{n-2\ell'}$ such that $Q'\subset P_{\ell'}$. Thus we get
\[
\begin{aligned}
& \langle\pi,\rm{Ind}^{\so(V)}_{Q}(\rm{Ind}^R_{R_{\ell'}}\tau_1\otimes\psi_{\ell-\ell'}\otimes \pi')\rangle_{\so(V)}\\
=&\langle\pi,\rm{Ind}^{\so(V)}_{Q'}(\tau_1\otimes\psi_{\ell-\ell'}\otimes \pi')\rangle_{\so(V)}\\
=&\langle\pi,I^{\so(V)}_{P_{\ell'}}(\tau_1\otimes\rm{Ind}^{\so^\epsilon_{n-2\ell'}}_{\so^\epsilon_{n-2\ell'}\cap Q'}(\psi_{\ell-\ell'}\otimes \pi'))\rangle_{\so(V)}.
\end{aligned}
\]
By our assumption, $\pi$ is unipotent and $\tau_1$ is not, hence
\[
\langle\pi,I^{\so(V)}_{P_{\ell'}}(\tau_1\otimes\rm{Ind}^{\so^\epsilon_{n-2\ell'}}_{\so^\epsilon_{n-2\ell'}\cap Q'}(\psi_{\ell-\ell'}\otimes \pi'))\rangle_{\so(V)}=0.
\]
In the same manner, one has
\[
\langle\pi,\rm{Ind}^{\so(V)}_{Q}(\rm{Ind}^R_{R^\ell_{\ell-\ell'}}\tau_2\otimes\psi_{\ell'}\otimes \pi')\rangle_{\so(V)}=0.
\]
It follows that
\[
\begin{aligned}
& \langle\pi,\rm{Ind}^{\so(V)}_{Q}\tau|_R\otimes \pi'\rangle_{\so(V)}\\
=&\langle\pi,\rm{Ind}^{\so(V)}_{Q}(\rm{Ind}^R_U\chi+\rm{Ind}^R_{R^\ell_{\ell'}}\tau_1\otimes\psi_{\ell-\ell'}+\rm{Ind}^R_{R^\ell_{\ell-\ell'}}\tau_2\otimes\psi_{\ell'})\otimes \pi'\rangle_{\so(V)}\\
=&\langle\pi,\rm{Ind}^{\so(V)}_{Q}(\rm{Ind}^R_U\chi\otimes \pi')\rangle_{\so(V)},
\end{aligned}
\]
which completes the proof.
\end{proof}

\begin{corollary}\label{o4}
Keep the assumptions in Proposition \ref{7.21}. Then
\begin{equation} \label{eq-cor}
m(\pi, \pi')=\langle I^{\so^\e_{n+1}}_{P}(\tau\otimes\pi'),\pi\rangle _{\so^\epsilon_{n}(\Fq)}=m\left(I^{\so^\e_{n+1-2\ell'}}_{\GGL_{\ell-\ell'}\times \so^\e_{m}}(\tau_2\otimes\pi'),\pi\right).
\end{equation}
\end{corollary}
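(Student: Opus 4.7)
The first equality in \eqref{eq-cor} is precisely the content of Proposition~\ref{7.21} applied to the pair $(\pi,\pi')$, so only the second equality requires proof. The plan is to combine induction in stages with a second application of Proposition~\ref{7.21}, this time to an auxiliary (possibly reducible) representation of a smaller orthogonal group, in order to ``peel off'' the factor $\tau_2$.

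First I would invoke induction in stages for the parabolic induction. Since $\tau = I^{\GGL_{\ell}}_{\GGL_{\ell'}\times\GGL_{\ell-\ell'}}(\tau_1\otimes\tau_2)$, a standard argument yields
\[
I^{\so^\e_{n+1}}_{P}(\tau\otimes\pi')\;\cong\; I^{\so^\e_{n+1}}_{P_1}\bigl(\tau_1\otimes\pi''\bigr),
\]
where $P_1$ is the $F$-stable maximal parabolic of $\so^\e_{n+1}$ with Levi factor $\GGL_{\ell'}\times\so^\e_{n+1-2\ell'}$, and
\[
\pi''\;:=\;I^{\so^\e_{n+1-2\ell'}}_{\GGL_{\ell-\ell'}\times\so^\e_{m}}(\tau_2\otimes\pi')
\]
is viewed as a (generally reducible) representation of $\so^\e_{n+1-2\ell'}\fq$.

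Next I would apply Proposition~\ref{7.21} to the pair $(\pi,\pi'')$. The representation $\pi$ is still the given irreducible unipotent representation of $\so^{\epsilon}_{n}\fq$; the role of the index $\ell$ in Proposition~\ref{7.21} is now played by $\ell'$; the parabolic is $P_1$; and the cuspidal datum reduces to the single factor $\tau_1$ (with no second factor). Since $\tau_1$ is irreducible cuspidal and is nontrivial whenever $\ell'=1$ by hypothesis, the assumptions of Proposition~\ref{7.21} (in fact, of the simpler Proposition~\ref{so1}) are all met. Therefore
\[
\langle I^{\so^\e_{n+1}}_{P_1}(\tau_1\otimes\pi''),\,\pi\rangle_{\so^{\epsilon}_{n}\fq}\;=\;m(\pi,\pi''),
\]
and combining this with the preceding display gives exactly the second equality in \eqref{eq-cor}.

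The argument is essentially bookkeeping once Proposition~\ref{7.21} is in hand. The only point that requires any attention is verifying that the hypotheses of Proposition~\ref{7.21} transfer correctly to the pair $(\pi,\pi'')$, namely the cuspidality and nontriviality of the first inducing factor and the correct Levi structure of $P_1$; all of these follow immediately from the assumptions already imposed on $\tau_1$, so no serious obstacle arises.
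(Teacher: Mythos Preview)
Your argument is correct and matches the paper's proof essentially line for line: the paper also invokes Proposition~\ref{7.21} for the first equality, then uses induction in stages to rewrite $I^{\so^\e_{n+1}}_{P}(\tau\otimes\pi')$ as $I^{\so^\e_{n+1}}_{\GGL_{\ell'}\times\so^\e_{n+1-2\ell'}}\bigl(\tau_1\otimes\pi''\bigr)$, and finally applies Proposition~\ref{so1}/\ref{7.21} once more to obtain the second equality. The only small caveat is that your parenthetical appeal to Proposition~\ref{so1} is not quite right, since that proposition assumes $\pi'$ is irreducible whereas $\pi''$ need not be; you do need Proposition~\ref{7.21} (which allows a general $\pi'$) or else an additivity argument, but this is a trivial fix and the paper itself cites both propositions for the same reason.
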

\begin{proof}
By Proposition \ref{so1} and Proposition \ref{7.21}, we have
\begin{align*}
m(\pi, \pi')=&\langle I^{\so^\e_{n+1}}_{P}(\tau\otimes\pi'),\pi\rangle _{\so^\epsilon_{n}(\Fq)}\\
=&\langle I^{\so^\e_{n+1}}_{P}((I^{\GGL_{\ell}}_{\GGL_{\ell'}\times \GGL_{\ell-\ell'}}(\tau_1\otimes\tau_2))\otimes\pi'),\pi\rangle _{\so^\epsilon_{n}(\Fq)}\\
=&\langle I^{\so^\e_{n+1}}_{\GGL_{\ell'}\times\so^\e_{n-1}}(\tau_1\otimes(I^{\so^\e_{n+1-2\ell'}}_{ \GGL_{\ell-1}\times \so^\e_{m}}(\tau_2\otimes\pi')),\pi\rangle _{\so^\epsilon_{n}(\Fq)}\\
=&m\left( I^{\so^\e_{n+1-2\ell'}}_{ \GGL_{\ell-\ell'}\times \so^\e_{m}}(\tau_2\otimes\pi'),\pi\right).
\end{align*}
\end{proof}

\subsection{Reformulation} To prove Theorem \ref{be}, by Proposition \ref{so1} and Corollary \ref{o4} it suffices to calculate (\ref{so11}) or (\ref{eq-cor}). In the rest of this section, we will take $\ell'=1$ in Proposition \ref{7.21} and Corollary \ref{o4}; and in order to apply the theta correspondence we will work with orthogonal groups instead of special orthogonal groups. It is not hard to see that Theorem \ref{be} readily follows from  Theorem \ref{even}, Theorem \ref{spthetalift}, and Theorem \ref{o1} below whose formulation is more adaptable for making induction argument. 

\begin{theorem}\label{o1}

(i) Let $\pi=\pi^{\eta}_{k}$ be an irreducible unipotent cuspidal representation of $\rm{O}_{2k(k+1)+1}^\epsilon\fq$, and $\pi'$ be an irreducible representation of $\o^\e_{2m}\fq$. Then the following hold.
\begin{itemize}
\item
If $m<k^2$, then
$
m(\pi,\pi')=0.
$
\item
If $m=k^2$, then
\[
m(\pi,\pi')=\left\{
\begin{array}{ll}
1, &  \textrm{if }\epsilon'=\epsilon(k)\textrm{ and } \pi'=\pi^{\eta'}_k,\\
0, & \textrm{otherwise,}
\end{array}\right.
\]
where $\pi^{\eta'}_k$ is the irreducible unipotent cuspidal representation of $\rm{O}^{\epsilon(k)}_{2k^2}\fq$ such that
\[
(n^\epsilon(\pi)-k(k+1))(n^{\epsilon(k)}(\pi^{\eta'}_k)-k^2)>0.
\]
\end{itemize}
(ii) Let $\pi=\pi_k^\eta$ be an irreducible unipotent cuspidal representation of $\rm{O}^{\epsilon(k)}_{2k^2}\fq$, and $\pi'$ be an irreducible representation of $\o_{2m+1}^\e\fq$. Then the following hold.
\begin{itemize}
\item
If $m<k(k-1)$, then
$
m(\pi,\pi')=0.
$
\item If $m=k(k-1)$, then
\[
m(\pi,\pi')=\left\{
\begin{array}{ll}
1, &  \textrm{if }\ \pi'=\pi^{\eta'}_{k-1},\\
0, & \textrm{otherwise,}
\end{array}\right.
\]
where $\pi^{\eta'}_{k-1}$ is the irreducible unipotent cuspidal representation of $\rm{O}_{2k(k-1)+1}^\e\fq$ such that
\[
(n^{\epsilon(k)}(\pi)-k^2)(n^\e(\pi^{\eta'}_{k-1})-k(k-1))>0.
\]
\end{itemize}
\end{theorem}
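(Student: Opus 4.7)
The plan is to proceed by induction on $m$ combined with a case analysis on $\pi'$; since part (ii) is completely analogous to part (i), I focus on (i).

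First I would reduce to the case that $\pi'$ is cuspidal. If $\pi'$ is non-cuspidal, it embeds into $I^{\o^{\epsilon'}_{2m}}_Q(\tau'\otimes\sigma)$ for some cuspidal $\sigma$ on $\o^{\epsilon'}_{2m-2\ell_1}$ and cuspidal $\tau'$ on $\GGL_{\ell_1}$. Proposition \ref{so1} rewrites $m(\pi,\pi')$ as an inner product on $\o^{\epsilon}_{2k(k+1)+2}$, and then induction in stages combined with a further application of Corollary \ref{o4} expresses $m(\pi,\pi')$ as a combination of smaller multiplicities $m(\pi,\sigma)$, which the inductive hypothesis takes care of.

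For $\pi'$ cuspidal I would split into the non-unipotent and unipotent subcases. If $\pi'$ is non-unipotent, Lemma \ref{4} gives $n^{\epsilon'}(\pi')<m+k$. Apply Proposition \ref{so1} to write $m(\pi,\pi')=\langle I^{\o^{\epsilon}_{2k(k+1)+2}}_P(\tau\otimes\pi'),\pi\rangle$, then invoke the first see-saw diagram of the paper (specialised to $n=k(k+1)+1$) together with Proposition \ref{w1}, which identifies the theta lift of the induced representation on the symplectic side as itself a parabolic induction built from the theta lift of $\pi'$. The resulting pairing must vanish, because the first occurrence of $\pi$ in the symplectic tower equals $k^2$ or $(k+1)^2$ (Theorem \ref{spthetalift}) and is too large to be reached from the theta lift of $\pi'$ under the bound from Lemma \ref{4}.

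If $\pi'$ is unipotent cuspidal, Theorem \ref{fun} forces $m=j^2$ and $\epsilon'=\epsilon(j)$ for some $j\leq k$. When $j<k$ the same see-saw argument again gives vanishing. When $j=k$, Theorem \ref{even} identifies $\pi'$ as the theta lift of $\pi_{\sp_{2k(k+1)}}$ (if $\eta'=-$) or of $\pi_{\sp_{2(k-1)k}}$ (if $\eta'=+$), and the see-saw combined with Theorem \ref{spthetalift} reduces the pairing to an explicit inner product of known cuspidal representations on a symplectic group. This yields multiplicity $1$ exactly when $\epsilon'=\epsilon(k)$ and $\eta,\eta'$ satisfy the stated sign condition on first occurrence indices.

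The main obstacle will be the careful tracking of discriminants $\epsilon,\epsilon'$ and $\pm$ labels $\eta,\eta'$ through the sequence of see-saws and theta lifts. Because $\pi^\pm$ and $\pi^\theta_{k,\alpha/\beta}$ are pinned down by chains of conservation relations (\cite{AM}, \cite{LW1}), verifying the precise sign condition $(n^\epsilon(\pi)-k(k+1))(n^{\epsilon(k)}(\pi^{\eta'}_k)-k^2)>0$ requires inductively unwinding those conservation chains consistently on both sides of the correspondence.
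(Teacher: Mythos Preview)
Your overall strategy (see-saw plus theta correspondence) matches the paper's, but there is a genuine gap in your first reduction step, and your non-vanishing sketch misses the actual inductive mechanism.

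\textbf{The reduction to cuspidal $\pi'$ does not go through.} You want to embed a non-cuspidal $\pi'$ into $I_Q(\tau'\otimes\sigma)$ and then use Corollary~\ref{o4} to collapse the extra $\GGL_{\ell_1}$-factor. But Corollary~\ref{o4} (and Proposition~\ref{7.21} underlying it) requires $\tau'$ to be \emph{nontrivial} when $\ell_1=1$: the proof kills the extra Bernstein--Zelevinsky pieces precisely because $\tau_1,\tau_2$ are non-unipotent, hence orthogonal to the unipotent $\pi$. When $\pi'$ lies in a unipotent Harish--Chandra series---say with cuspidal support $\pi_j$ on $\o^{\epsilon(j)}_{2j^2}$ for some $j<k$---every $\GGL$-factor in the support is a $\GGL_1$ carrying the trivial character, and the hypothesis of Corollary~\ref{o4} fails. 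So you cannot conclude $m(\pi,\pi')\le m(\pi,\sigma)$ this way. The paper circumvents this entirely: instead of reducing to cuspidal $\pi'$, Proposition~\ref{o2} works directly with the Harish--Chandra series $\cal{E}(G,\sigma')$ via Proposition~\ref{o3}, which tracks cuspidal supports through theta lifts. The see-saw is then run with the full $\pi'$ (not $\sigma'$), and the vanishing comes from an induction on $n$, passing from the pair $(n,n_+)$ to a strictly smaller pair $(n_--1,n_-)$.

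\textbf{The non-vanishing case is not a one-step reduction.} When $m=k^2$ and $\pi'=\pi^{\eta'}_k$, a single see-saw does not land you at ``an explicit inner product of known cuspidal representations on a symplectic group'' that you can read off. In the paper's Proposition~\ref{oo}, the first see-saw (with $\o^\epsilon_{2k(k+1)+1}\times\o^{\epsilon''}_1\subset\o^{\epsilon(k)}_{2k(k+1)}$) converts the pairing into one involving $\pi^\theta_{k,i}$ and $I^{\sp_{2k^2}}(\tau_2\otimes\pi_{\sp_{2k(k-1)}})$ on $\sp_{2k^2}$; a \emph{second} see-saw (with $\o^{-\epsilon}_{2k(k-1)+1}\times\o_1\subset\o^{\epsilon(k-1)}_{2(k(k-1)+1)}$) then pushes this down to the analogous pairing for $k-1$. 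The value $1$ comes from induction on $k$, not from a direct computation at level $k$. Your sketch should reflect this double see-saw descent.
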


The rest of this section is devoted to the proof of Theorem \ref{o1}, which will be divided into two parts.

\subsection{Vanishing result} As the first step towards the proof, we establish the cases where the multiplicity in Theorem \ref{o1} vanishes.

\begin{proposition}\label{o2}
(i) Let $\pi$ and $\pi'$ be irreducible representations of $\o_{2n+1}^\epsilon\fq$ and $\o^{\epsilon'}_{2m}\fq$ respectively, $m\leq n$. Let $\sigma$ and $\sigma'$ be irreducible cuspidal representations of $\o_{2n^*+1}^\epsilon\fq$ and $\o^{\epsilon'}_{2m^*}\fq$, respectively,  $n^*\le n$, $m^*\le m$, such that  $\pi\in \cal{E}(\o_{2n+1}^\epsilon,\sigma)$ and $\pi'\in\cal{E}(\o^\e_{2m},\sigma')$. Let $\ell=n+1-m$ and $\tau$ be an irreducible cuspidal representation of $\GGL_{\ell}\fq$, nontrivial if $\ell=1$. If one of the following holds:
\begin{itemize}
\item[]
Case (A): $n^{\epsilon}(\sigma)-n^*>0$ and $n^{\epsilon}(\sigma)-n^*-1>n^{\epsilon'}(\sigma')-m^*$;
\item[]
Case (B): $n^{\epsilon}(\sigma)-n^*<0$ and $n^{\epsilon}(\sigma)-n^*<n^{\epsilon'}(\sigma')-m^*$,
\end{itemize}
 then
\[
\langle I^{\o^{\epsilon'}_{2n+2}}_{\GGL_{n+1-m}\times\o^\e_{2m}}(\tau\otimes\pi'),\pi\rangle _{\o^{\epsilon}_{2n+1}\fq}=0.
\]
(ii) Let $\pi$ and $\pi'$ be irreducible representations of $\o^\epsilon_{2n}\fq$ and $\o^{\epsilon'}_{2m+1}\fq$ respectively, $m<n$. Let $\sigma$ and $\sigma'$ be irreducible cuspidal representations of $\o^\epsilon_{2n^*}\fq$ and $\o^{\epsilon'}_{2m^*+1}\fq$ respectively,  $n^*\le n$, $m^*\le m$, such that $\pi\in \cal{E}(\o^\epsilon_{2n},\sigma)$ and $\pi'\in \cal{E}(\o_{2m+1},\sigma')$. Let $\ell=n-m$ and $\tau$ be an irreducible cuspidal representation of $\GGL_{\ell}\fq$, nontrivial if $\ell=1$. If one of the following holds:
\begin{itemize}
\item[]
Case (A): $n^{\epsilon}(\sigma)-n^*>0$ and $n^{\epsilon}(\sigma)-n^*>n^{\epsilon'}(\sigma')-m^*$;
\item[]
Case (B): $n^{\epsilon}(\sigma)-n^*<0$ and $n^{\epsilon}(\sigma)-n^*+1<n^{\epsilon'}(\sigma')-m^*$,
\end{itemize}
 then
\[
\langle I^{\o^\e_{2n+1}}_{\GGL_{n-m}\times \o^\e_{2m+1}}(\tau\otimes\pi'),\pi\rangle _{\o^{\epsilon}_{2n}\fq}=0.
\]
\end{proposition}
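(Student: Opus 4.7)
I outline the strategy for part (i); part (ii) is proved analogously using the see-saw for the $(\o_{2n}^\epsilon, \o_{2n-1}^{\epsilon'}\times \o_1^{\epsilon''})$-configuration instead. Choose $\epsilon''\in\{\pm\}$ with $\e=\epsilon_{-1}\cdot\epsilon\cdot\epsilon''$, so that $\o^\epsilon_{2n+1}\times\o^{\epsilon''}_1$ embeds into $\o^\e_{2n+2}$ via \eqref{disc}. Restriction along this inclusion decomposes the pairing into a sum over characters $\chi$ of $\o^{\epsilon''}_1\fq$ of
\[
\langle I^{\o^\e_{2n+2}}_{\GGL_\ell\times\o^\e_{2m}}(\tau\otimes\pi'),\pi\otimes\chi\rangle_{\o^\epsilon_{2n+1}\fq\times\o^{\epsilon''}_1\fq},
\]
so it suffices to show each summand vanishes.

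Let $c=n^\e(\sigma')+(m-m^*)$, which by \cite{P3} is the first occurrence of $\pi'$ in the Witt tower ${\bf Sp}$, and fix an irreducible $\pi'_\sp\subset \Theta_{\o^\e_{2m},\sp_{2c}}(\pi')$. Set $a=c+\ell$. Proposition \ref{w1} applied to the pair $(\o^\e_{2m},\sp_{2c})$ (with no quadratic twist, since the dual pair is even-orthogonal/symplectic) gives
\[
I^{\o^\e_{2n+2}}_{\GGL_\ell\times\o^\e_{2m}}(\tau\otimes\pi')\subset \Theta_{\sp_{2a},\o^\e_{2n+2}}(\pi_\sp)
\]
for some constituent $\pi_\sp\subset I^{\sp_{2a}}_{\GGL_\ell\times\sp_{2c}}(\tau\otimes\pi'_\sp)$. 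Feeding this into the see-saw
\[
\setlength{\unitlength}{0.7cm}
\begin{picture}(16,4)
\thicklines
\put(6.3,3.3){$\o^\e_{2n+2}$}
\put(5.3,0.5){$\o^\epsilon_{2n+1}\times\o^{\epsilon''}_1$}
\put(10.6,3.3){$\sp_{2a}\times\sp_{2a}$}
\put(11.3,0.5){$\sp_{2a}$}
\put(7,1){\line(0,1){2}}
\put(11.5,1){\line(0,1){2}}
\put(7.5,1){\line(2,1){3.8}}
\put(7.5,3){\line(2,-1){3.8}}
\end{picture}
\]
from Section \ref{sec3}, the fixed summand is bounded above by $\langle \pi_\sp,\,\Theta_{\o^\epsilon_{2n+1},\sp_{2a}}(\pi)\otimes\Theta_{\o^{\epsilon''}_1,\sp_{2a}}(\chi)\rangle_{\sp_{2a}\fq}$; hence it suffices to prove $\Theta_{\o^\epsilon_{2n+1},\sp_{2a}}(\pi)=0$. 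A second application of Proposition \ref{w1}, now to $\pi\subset I^{\o^\epsilon_{2n+1}}_{\GGL_{n-n^*}\times\o^\epsilon_{2n^*+1}}(\rho\otimes\sigma)$, combined with the first-occurrence classification of \cite{P3}, gives $n^\epsilon(\pi)=n^\epsilon(\sigma)+(n-n^*)$, so the required vanishing holds exactly when $a<n^\epsilon(\sigma)+(n-n^*)$.

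Direct substitution produces $a=n^\e(\sigma')-m^*+n+1$, and the Case (A) hypotheses $n^\epsilon(\sigma)-n^*>0$ and $n^\epsilon(\sigma)-n^*-1>n^\e(\sigma')-m^*$ rearrange to precisely $a<n^\epsilon(\sigma)+(n-n^*)$, completing the proof in this case. Case (B) is handled by the mirror argument, swapping the roles of $\pi$ and $\pi'$ through the conservation relations of \cite{P3}; the one-step asymmetry between the inequalities in (A) and (B) reflects the rank-one factor $\o^{\epsilon''}_1$ that appears on exactly one side of the see-saw. The main technical obstacle is establishing the precise equality $n^\epsilon(\pi)=n^\epsilon(\sigma)+(n-n^*)$ for arbitrary $\pi\in\cal{E}(\o^\epsilon_{2n+1},\sigma)$, rather than the mere upper bound that Proposition \ref{w1} produces directly; this is where the deep first-occurrence machinery of \cite{P3} enters in full force.
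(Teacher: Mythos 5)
There is a genuine gap at the final step. Your argument funnels everything into the single vanishing statement $\Theta_{\o^\epsilon_{2n+1},\sp_{2a}}(\pi)=0$, which you justify by asserting the exact equality $n^\epsilon(\pi)=n^\epsilon(\sigma)+(n-n^*)$ for an arbitrary $\pi\in\cal{E}(\o^\epsilon_{2n+1},\sigma)$. That equality is not available and is false in general: Proposition \ref{o3}(ii) (and the compatibility in Proposition \ref{w1}) only give the \emph{upper} bound $n^\epsilon(\pi)\le n^\epsilon(\sigma)+(n-n^*)$, i.e.\ a non-vanishing statement, while the lower bound you need can fail because individual constituents of a Harish-Chandra series can occur strictly earlier than the ``generic'' index (this comes from the $i>0$ terms in the Jacquet-module decomposition of the Weil representation; the only lower bound forced by Proposition \ref{o3}(i) is $n^\epsilon(\pi)\ge n^\epsilon(\sigma)$, which is far too weak here). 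You flag this as ``the main technical obstacle'' and defer it to \cite{P3}, but \cite{P3} does not supply such an equality for non-cuspidal representations, so the proof as written only covers the cuspidal case $\pi=\sigma$.

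The paper's proof is structured precisely to avoid needing this. It sets $n_+=n+1+n^{\epsilon'}(\sigma')-m^*$ (your $a$) and observes that $\Theta^\epsilon_{n,n_+}(\pi)=0$ only in the base case $\pi=\sigma$. When $\Theta^\epsilon_{n,n_+}(\pi)\neq 0$, it uses Proposition \ref{o3}(i) to pin down the Harish-Chandra series of the lift, realizes each constituent as a theta lift from a \emph{smaller} odd orthogonal group $\o^\epsilon_{2(n_- -1)+1}$ with $n_-<n+1$ via a second see-saw, and then closes the loop by induction on $n$, since the resulting pairing is again an instance of the same proposition with the same cuspidal supports $\sigma,\sigma'$ and hence the same Case (A) hypotheses. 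To repair your argument you would need to add this second see-saw and the induction on $n$ (or otherwise prove a genuine lower bound on $n^\epsilon(\pi)$, which is not true at the stated generality). Separately, your dismissal of Case (B) as a ``mirror argument'' should be replaced by the actual reduction: tensoring $\pi$, $\pi'$, $\sigma$, $\sigma'$ with the sign character and invoking the conservation relation of \cite[Theorem 12.3]{P1}, which converts the Case (B) inequalities into the Case (A) inequalities.
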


We will prove the above proposition by the standard arguments of theta correspondence and see-saw dual pairs. To this end we first need to know the theta correspondence of representations in the Harish-Chandra series $\cal{E}(G,\sigma)$ for a cuspidal representation $\sigma$.

\begin{proposition} \label{o3}
Let  $(G_m, G'_{m'})$ be a dual pair  in the Witt tower  $(\bf{Sp},\bf{O}^\epsilon_{\rm{even}})$ or $(\bf{Sp},\bf{O}^\epsilon_{\rm{odd}})$. Assume that $\pi\in \cal{E}(G_m,\sigma)$, where $\sigma$ is an irreducible cuspidal representation of $G_n^F$, $n\leq m$, $n\equiv m \ \rm{mod} \ 2$. Let $n' = n^\epsilon(\sigma)$ be its first occurrence index, so that $\sigma' := \Theta^\epsilon_{n,n'}(\sigma)$ is an irreducible cuspidal representation of $G_{n'}^{\prime F}$. Then the following hold.

(i) The irreducible constituents of $\Theta^\epsilon_{m,m'}(\pi)$ belong to   $\cal{E}(G_{m'}^{\prime },\sigma')$,

(ii) If $m'-m\ge {n'-n}$, then $\Theta^\epsilon_{m,m'}(\pi)\ne 0$.
\end{proposition}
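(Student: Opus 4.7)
The strategy is induction on $m-n$, exploiting the compatibility of parabolic induction with theta correspondence supplied by Proposition \ref{w1}. The base case $m=n$ is the given data: $\pi=\sigma$ and, for part (ii), $\Theta^\epsilon_{n,m'}(\sigma)\neq 0$ for every $m'\geq n'$ by the tower property of first occurrence. For the inductive step, the definition of the Harish-Chandra series together with induction in stages lets us write
\[
\pi\;\subset\; I^{G_m}_{\GGL_\ell\times G_{m-\ell}}(\rho\otimes\tilde\pi)
\]
for some integer $\ell\geq 1$, some irreducible cuspidal representation $\rho$ of $\GGL_\ell(\Fq)$, and some $\tilde\pi\in\mathcal{E}(G_{m-\ell},\sigma)$. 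Both parts are then reduced from the level $m$ to the level $m-\ell$.

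\textbf{Part (i).} Fix any irreducible constituent $\pi'\subset\Theta^\epsilon_{m,m'}(\pi)$. Proposition \ref{w1} produces an irreducible $\tilde\pi'\subset\Theta^\epsilon_{m-\ell,m'-\ell}(\tilde\pi)$ together with an embedding
\[
\pi'\;\subset\; I^{G'_{m'}}_{\GGL_\ell\times G'_{m'-\ell}}\bigl((\chi\otimes\rho)\otimes\tilde\pi'\bigr).
\]
By the inductive hypothesis $\tilde\pi'\in\mathcal{E}(G'_{m'-\ell},\sigma')$, and transitivity of Harish-Chandra series (induction in stages applied to the cuspidal pair supporting $\tilde\pi'$) places $\pi'$ in $\mathcal{E}(G'_{m'},\sigma')$.

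\textbf{Part (ii).} The inequality $m'-m\geq n'-n$ is preserved under $(m,m')\mapsto(m-\ell,m'-\ell)$, so by induction $\Theta^\epsilon_{m-\ell,m'-\ell}(\tilde\pi)\neq 0$; fix an irreducible constituent $\tilde\pi'$. Revisiting the Jacquet-module computation from the proof of Proposition \ref{w1}---which is at heart an identity of characters of $\omega^\epsilon_{m,m'}$---one deduces
\[
\bigl\langle \omega^\epsilon_{m,m'},\; I^{G_m}_{\GGL_\ell\times G_{m-\ell}}(\rho\otimes\tilde\pi)\;\otimes\; I^{G'_{m'}}_{\GGL_\ell\times G'_{m'-\ell}}\bigl((\chi\otimes\rho)\otimes\tilde\pi'\bigr)\bigr\rangle\;\neq\; 0,
\]
so at least one irreducible constituent of $I(\rho\otimes\tilde\pi)$ has non-zero theta lift. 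A block-wise matching of constituents on the two sides---an instance of finite-field Howe duality between $\mathcal{E}(G_m,\sigma)$ and $\mathcal{E}(G'_{m'},\sigma')$, set up by iterating Proposition \ref{w1} and part (i)---identifies this constituent with $\pi$, yielding $\Theta^\epsilon_{m,m'}(\pi)\neq 0$.

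\textbf{Main obstacle.} Part (i) is a clean application of Proposition \ref{w1}. The essential difficulty lies in part (ii): Proposition \ref{w1} only provides one-sided containment, so one needs a specific $\pi$ (not merely some constituent of $I(\rho\otimes\tilde\pi)$) to have non-zero lift. Resolving this requires either a sharper bijection between constituents on the two sides controlled by the associated Hecke algebras, or a direct construction of a non-zero vector in $\mathrm{Hom}_{G_m}(\pi,\omega^\epsilon_{m,m'})$ from the vector witnessing $\Theta^\epsilon_{m-\ell,m'-\ell}(\tilde\pi)\neq 0$, upgraded through the unipotent radical by unfolding the Schr\"odinger model of $\omega^\epsilon_{m,m'}$.
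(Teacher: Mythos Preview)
Your inductive step for part (i) is essentially the paper's argument repackaged through Proposition \ref{w1}, and that is fine. The genuine gap is the base case. When $m=n$ you have $\pi=\sigma$, but the statement to be proved is that every irreducible constituent of $\Theta^\epsilon_{n,m'}(\sigma)$ lies in $\mathcal{E}(G'_{m'},\sigma')$ for \emph{every} $m'$. For $m'=n'$ this is the definition of $\sigma'$, but for $m'>n'$ the lift $\Theta^\epsilon_{n,m'}(\sigma)$ is a non-cuspidal, typically reducible representation, and nothing you have written addresses its cuspidal support. The paper handles this by embedding an arbitrary constituent $\pi'$ into $I^{G'_{m'}}_{\GGL_j\times G'_{m'-j}}(\rho'\otimes\sigma_1')$ with $\sigma_1'$ cuspidal, applying the Jacquet module decomposition of $\omega^\epsilon_{n,m'}$ along $\GGL_j\times G'_{m'-j}$, and using cuspidality of $\sigma$ to kill all summands except $i=0$; this forces $\rho'=1_{\GGL_j}$, $m'-j=n'$, and $\sigma_1'=\sigma'$. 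This is the heart of (i), not a triviality. A secondary issue: your induction invokes Proposition \ref{w1} one $\GGL_\ell$-block at a time, but that proposition carries the hypothesis ``$\tau$ non-selfdual if $\ell=1$'', which you cannot guarantee for the cuspidal $\rho$ arising from the Harish-Chandra support of $\pi$. The paper avoids this by going straight from level $m$ to level $n$ in one step and working with the raw Jacquet module formula rather than Proposition \ref{w1}.

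For part (ii) you correctly locate the obstruction: from $\Theta^\epsilon_{m-\ell,m'-\ell}(\tilde\pi)\neq 0$ you only get that \emph{some} constituent of $I(\rho\otimes\tilde\pi)$ has nonzero lift, and your proposed fixes (Hecke-algebra bijection or Schr\"odinger-model unfolding) are not carried out. The paper sidesteps the issue entirely with a direct, non-inductive argument: write $\pi\subset I^{G_m}_{\GGL_{m-n}\times G_n}(\rho\otimes\sigma)$, choose $\tau$ so that $I^{G'_{m'-m+n}}_{\GGL_{m'-m+n-n'}\times G'_{n'}}(\tau\otimes\sigma')$ meets $\Theta^\epsilon_{n,m'-m+n}(\sigma)$ (possible since $m'-m+n\geq n'$), and then compute
\[
\bigl\langle \omega^\epsilon_{m,m'},\ \pi\otimes I^{G'_{m'}}_{\GGL_{m-n}\times G'_{m'-m+n}}\bigl(\chi\rho\otimes I(\tau\otimes\sigma')\bigr)\bigr\rangle
\]
via the Jacquet module decomposition on the $G'_{m'}$ side. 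The term $i=m-n$ alone already gives a positive contribution, because there the Jacquet module of $\pi$ contains $\sigma\otimes\rho$ and the $\GGL_{m-n}$-piece matches $\chi\rho$ against $\chi R^{\GGL_{m-n}}$. This produces $\Theta^\epsilon_{m,m'}(\pi)\neq 0$ with no appeal to Howe duality or constituent-matching.
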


\begin{proof}
We will only prove the proposition for $(G_n,G_{n'}')\in(\bf{Sp},\bf{O}^\epsilon_{\rm{odd}})$. The proof for symplectic and even orthogonal dual pairs is similar and will be left to
the reader.

We first prove (i) by induction on $m$.

 $\bullet$ Suppose that $ m = n$, i.e. $\pi=\sigma$ is cuspidal. Since $\Theta^\epsilon_{n,m'}(\pi)=0$ if $m'<n'$ and $\Theta^\epsilon_{n,n'}(\sigma)=\sigma'$, we may assume that $m'>n'$. It is known that (cf. \cite[Chap. 3]{MVW}) each constituent $\pi'$ of $\Theta^\epsilon_{n,m'}(\pi)$  is noncuspidal. Let $j$ be the positive integer such that $\pi'\subset I^{\o_{2m'+1}^\epsilon}_{\o^\epsilon_{2(m'-j)+1}\times \GGL_{j}}(\sigma_{1}'\otimes \rho')$ with $\sigma_{1}'\in  \E(\o^\epsilon_{2(m'-j)+1})$ cuspidal and $\rho'\in \E(\GGL_{j})$. Since $\pi'\subset\Theta^\epsilon_{n, m'}(\pi)$, one has
\[
\begin{aligned}
0&<\langle\omega^\epsilon_{n,m'},\pi \otimes \pi '\rangle_{\sp_{2n}\fq\times \o_{2m'+1}^\epsilon\fq} \\
&\leq \langle\omega^\epsilon_{n,m'},\pi \otimes I^{\o^\epsilon_{2m'+1}}_{\o^\epsilon_{2(m'-j)+1}\times \GGL_{j}}(\sigma_{1}'\otimes \rho')\rangle_{\sp_{2n}\fq\times \o^\epsilon_{2m'+1}\fq}
\\&=\langle J^{\o^\epsilon_{2m'+1}}_{\o^\epsilon_{2(m'-j)+1}\times \GGL_{j}}(\omega^\epsilon_{n,m'}),\pi \otimes \sigma_{1}'\otimes \rho'\rangle_{\sp_{2n}\fq\times \o^\epsilon_{2(m'-j)+1}\fq\times \GGL_{j}\fq}.
\end{aligned}
\]
Here $J$ standards for the Jacquet functor, which is adjoint to the induction functor $I$. We have the following decomposition (cf. \cite[Chap. 3, IV th.5]{MVW})
\[
\begin{aligned}
&J^{\o^\epsilon_{2m'+1}}_{\o^\epsilon_{2(m'-j)+1}\times \GGL_{j}}(\omega^\epsilon_{n,m'})
\\=&\bigoplus^{\min(n,j)}_{i=0}I^{\sp_{2n}\times \GGL_{j} \times \o^\epsilon_{2(m'-j)+1}}_{\sp_{2(n-i)}\times \GGL_{i} \times ( \GGL_{j-i}\times \GGL_{i})\times \o^\epsilon_{2(m'-j)+1}}(\omega^\epsilon_{n-i,m'-j} \otimes 1_{\GGL_{j-i}} \otimes \chi_{\GGL_{i}}R^{\GGL_{i}}).
\end{aligned}
\]
where $R^{\GGL_{i}}$ is the regular representation of $\GGL_{i}\fq$.
Hence $\langle\omega^\epsilon_{n,m'},\pi \otimes \pi '\rangle $ is bounded by
\[
\sum^{\min(n,j)}_{i=0}\langle\omega^\epsilon_{n-i,m'-j} \otimes 1_{\GGL_{j-i}} \otimes \chi_{\GGL_{i}}R^{\GGL_{i}},J^{\sp_{2n}\times \GGL_{j} \times \o^\epsilon_{2(m'-j)+1}}_{\sp_{2(n-i)}\times \GGL_{i} \times ( \GGL_{j-i}\times \GGL_{i})\times \o^\epsilon_{2(m'-j)+1}}(\pi \otimes \sigma_{1}'\otimes \rho')\rangle,
\]
where the scalar product in the $i$th summand is taken over the group
\[
\sp_{2(n-i)}\fq\times \o_{2(m'-j)+1}\fq\times \GGL_{j-i}\fq\times \GGL_i\fq\times \GGL_i\fq.
\]
Since $\pi=\sigma$ is cuspidal, the only nonzero term corresponds to $i = 0$, which implies that
\[
\langle\omega^\epsilon_{n,m'-j}\otimes 1_{\GGL_{j}},\pi \otimes \sigma_{1}'\otimes \rho'\rangle >0.
\]
It follows that $\rho'=1_{\GGL_{j}}$ and $\sigma_{1}'\subset\Theta^\epsilon_{n, m'-j}(\pi)$. Because $\sigma_{1}'$ is cuspidal,
we must have $m'-j=n'$ and $\sigma_1'=\sigma'$, i.e.  $\pi' \in \cal{E}(G_{m'}^{\prime F},\sigma')$.

$\bullet$ Suppose that $ m > n$. Then there exists $\rho \in \E(\GGL_{m-n})$ such that $\pi\subset I^{\sp_{2m}}_{\sp_{2n}\times \GGL_{n-m}}(\sigma\otimes \rho)$. For  an irreducible constituent $\pi'$ of  $\Theta^\epsilon_{m,m'}(\pi)$, we have
\begin{align*}
0&<\langle\omega^\epsilon_{m,m'},\pi \otimes \pi '\rangle_{\sp_{2m}\fq\times \o^\epsilon_{2m'+1}\fq}
\\&\leq \langle\omega^\epsilon_{m,m'},I^{\sp_{2m}}_{\sp_{2n}\times \GGL_{n-m}}(\sigma\otimes \rho) \otimes \pi '\rangle_{\sp_{2m}\fq \times \o^\epsilon_{2m'+1}\fq }
\\&=\langle J^{\sp_{2m}}_{\sp_{2n}\times \GGL_{n-m}}(\omega^\epsilon_{m,m'}),\sigma\otimes \rho \otimes\pi '\rangle_{ \sp_{2n}\fq \times \GGL_{m-n}\fq\times \o^\epsilon_{2m'+1}\fq }
\end{align*}
We have the decomposition
\begin{align*}
&J^{\sp_{2m}}_{\sp_{2n}\times \GGL_{m-n}}(\omega^\epsilon_{m,m'})
\\=&\bigoplus^{\min(m',m-n)}_{i=0}I^{\sp_{2n}\times \GGL_{m-n} \times \o^\epsilon_{2m'+1}}_{\sp_{2n}\times (  \GGL_{m-n-i}\times \GGL_{i})\times \GGL_{i}\times \o^\epsilon_{2(m'-i)+1}}(\omega^\epsilon_{n,m'-i} \otimes \chi_{\GGL_{m-n-i}} \otimes \chi_{\GGL_{i}}R^{\GGL_{i}}).
\end{align*}
Hence $\langle\omega^\epsilon_{m,m'},\pi \otimes \pi '\rangle $ is bounded by
\[
\sum^{\min(m',m-n)}_{i=0}\langle\omega^\epsilon_{n,m'-i} \otimes \chi_{\GGL_{m-n-i}} \otimes \chi_{\GGL_{i}}R^{\GGL_{i}},J^{\sp_{2n}\times \GGL_{m-n} \times \o^\epsilon_{2m'+1}}_{\sp_{2n}\times (  \GGL_{m-n-i}\times \GGL_{i})\times \GGL_{i}\times \o^\epsilon_{2(m'-i)+1}}(\sigma\otimes \rho \otimes\pi ')\rangle.
\]
By induction hypothesis and our result for $m=n$, if $\pi'\notin \cal{E}(G_{m'}',\sigma')$, then the above summation is zero,
which yields a contradiction.

To prove (ii), note that in this case $m'-m+n\ge n'$ and $\Theta^\epsilon_{n,m'-m+n}(\sigma)\ne 0$. It follows that there exists $\tau\in \E(\GGL_{{m'-m+n-n'}})$ such that
\[
\langle\Theta^\epsilon_{n,m'-m+n}(\sigma), I^{\o^\epsilon_{2(m'-m+n)+1}}_{\o^\epsilon_{2n'+1}\times\GGL_{m'-m+n-n'} } (\sigma'\otimes\tau)\rangle> 0.
\]
Then the required assertion follows from
\begin{align*}
&\langle\omega^\epsilon_{m,m'},\pi \otimes I^{\o^\epsilon_{2m'+1}}_{\o^\epsilon_{2(m'-m+n)+1}\times \GGL_{m-n}}(I^{\o^\epsilon_{2(m'-m+n)+1}}_{\o^\epsilon_{2n'+1}\times \GGL_{m'-m+n-n'}}(\sigma'\otimes\tau)\otimes \chi_{\GGL_{m-n}}\rho)\rangle_{\sp_{2m}\fq\times \o^\epsilon_{2m'+1}\fq}\\
=&\langle J^{\o_{2m'+1}}_{\o^\epsilon_{2(m'-m+n)+1}\times \GGL_{m-n}}(\omega^\epsilon_{m,m'}),\pi \otimes I^{\o^\epsilon_{2(m'-m+n)+1}}_{\o^\epsilon_{2n'+1}\times \GGL_{m'-m+n-n'}}(\sigma'\otimes\tau)\otimes \chi_{\GGL_{m-n}}\rho\rangle\\
=&\sum^{\min(m,m-n)}_{i=0}\langle\omega^\epsilon_{m-i,m'-m+n} \otimes \chi_{\GGL_{m-n-i}} \otimes \chi_{\GGL_{i}}R^{\GGL_{i}},
J^{\sp_{2m}\times \GGL_{m-n} \times \o^\epsilon_{2(m'-m+n)+1}}_{\sp_{2(m-i)}\times \GGL_{i} \times
( \GGL_{m-n-i}\times \GGL_{i})\times \o^\epsilon_{2(m'-m+n)+1}}\\
&\quad (\pi \otimes \chi_{\GGL_{m-n}}\rho\otimes I^{\o^\epsilon_{2(m'-m+n)+1}}_{\o^\epsilon_{2n'+1}\times \GGL_{m'-m+n-n'}}(\sigma'\otimes\tau) )\rangle\\
\ge & \langle\omega^\epsilon_{n,m'-m+n} \otimes \chi_{\GGL_{m-n}}R^{\GGL_{m-n}},
J^{\sp_{2m}\times \GGL_{m-n} \times \o^\epsilon_{2(m'-m+n)+1}}_{\sp_{2n}\times \GGL_{m-n} \times \GGL_{m-n}\times \o^\epsilon_{2(m'-m+n)+1}}\\
 & \quad (\pi \otimes\chi_{\GGL_{m-n}}\rho\otimes I^{\o^\epsilon_{2(m'-m+n)+1}}_{\o^\epsilon_{2n'+1}\times \GGL_{m'-m+n-n'}}(\sigma'\otimes\tau))\rangle\\
\ge& \langle\omega^\epsilon_{n,m'-m+n} \otimes \chi_{\GGL_{m-n}}R^{\GGL_{m-n}},
(\sigma\otimes \rho)\otimes\chi_{\GGL_{m-n}}\rho\otimes I^{\o^\epsilon_{2(m'-m+n)+1}}_{\o^\epsilon_{2n'+1}\times \GGL_{m'-m+n-n'}}(\sigma'\otimes\tau))\rangle\\
>&0.
\end{align*}
\end{proof}

\begin{proof} (of Proposition \ref{o2})
We only prove (i). The proof of (ii) is similar and will be left to the reader.  To ease notations we suppress various Levi subgroups from
the parabolic induction in the sequel, which should be clear from the context.

Note that if $n^{\epsilon}(\sigma)>n^*$ and $n^{\epsilon}(\sigma)-n^*-1>n^{\epsilon'}(\sigma')-m^*$, then by the conservation relation for cuspidal representations given in \cite[Theorem 12.3]{P1}, one has
\[
n^{\epsilon}(\sigma \otimes\rm{sgn})< n^*\quad \rm{and} \quad n^{\epsilon}(\sigma \otimes\rm{sgn})-n^*<n^{\epsilon'}(\sigma' \otimes\rm{sgn})-m^*.
\]
 On the other hand it is clear that
\begin{align*}
& \pi\in \cal{E}(\o^\epsilon_{2n},\sigma) \Longleftrightarrow \pi \otimes\rm{sgn}\in \cal{E}(\o^\epsilon_{2n},\sigma \otimes\rm{sgn}),\\
& \pi'\in  \cal{E}(\o^\e_{2m},\sigma')\Longleftrightarrow \pi' \otimes\rm{sgn}\in \cal{E}(\o^\e_{2m},\sigma' \otimes\rm{sgn})
\end{align*}
and
\[
\langle I^{\o^{\epsilon'}_{2n+2}}(\tau\otimes\pi'),\pi\rangle _{\o^{\epsilon}_{2n+1}\fq}=\langle I^{\o^{\epsilon'}_{2n+2}}(\tau\otimes(\pi' \otimes\rm{sgn})),\pi \otimes\rm{sgn}\rangle _{\o^{\epsilon}_{2n+1}\fq}.
\]
Hence it suffices to prove Case (A).

Put \[
n_+=n+1+n^{\epsilon'}(\sigma')-m^* \quad \textrm{ and }\quad n_-=n_+-(n^{\epsilon}(\sigma)-n^*-1).
\]
By our assumption, $n_-<n+1$.
Consider the see-saw diagram
\[
\setlength{\unitlength}{0.8cm}
\begin{picture}(20,5)
\thicklines
\put(6.3,4){$\sp_{2n_+}\times \sp_{2n_+}$}
\put(7,1){$\sp_{2n_+}$}
\put(12.2,4){$\o^{\epsilon'}_{2(n+1)}$}
\put(11.5,1){$\o^\epsilon_{2n+1}\times \o_1^{\epsilon''}$}
\put(7.7,1.5){\line(0,1){2.1}}
\put(12.8,1.5){\line(0,1){2.1}}
\put(8,1.5){\line(2,1){4.2}}
\put(8,3.7){\line(2,-1){4.2}}
\end{picture}
\]
where $\epsilon''=\epsilon_{-1}\cdot \epsilon\cdot\epsilon'$.

By Proposition \ref{o3} (ii) and Proposition \ref{w1}, for any irreducible $\rho'\subset I^{\o^{\epsilon'}_{2(n+1)}}(\tau\otimes\pi')$, there exists $\pi_1\in \E(\sp_{2(n_+-\ell)},\Theta^{\epsilon'}_{m^*,n^{\epsilon'}(\sigma')}(\sigma'))$ and irreducible $\rho_1\subset I^{\sp_{2n_+}}(\tau\otimes\pi_1)$ such that
\[
\rho'\subset \Theta^{\epsilon'}_{n_+,n+1}(\rho_1).
\]
Then we have
\[
\begin{aligned}
\langle \pi,\rho'\rangle_{\o^\epsilon_{2n+1}\fq}&\le
\langle \pi,\Theta^{\epsilon'}_{n_+,n+1}(\rho_1)
\rangle_{\o^\epsilon_{2n+1}\fq}=\langle \Theta^\epsilon_{n,n_+}(\pi)\otimes \omega^{\epsilon''}_{n_+},\rho_1\rangle_{\sp_{2n_+}\fq }.\\
\end{aligned}
\]
If $\Theta^\epsilon_{n,n_+}(\pi)=0$ then $\langle \pi',I^{\o^{\epsilon'}_{2(n+1)}}(\tau\otimes\pi)\rangle_{\o^\epsilon_{2n+1}}=0$. In particular, if $\pi=\sigma$, then by our assumption,
\[
n_+=n+1+n^{\epsilon'}(\sigma')-m^*<n+n^{\epsilon}(\sigma)-n^*=n^{\epsilon}(\sigma).
\]
It follows that $\Theta^\epsilon_{n,n_+}(\sigma)=0$ and the multiplicity is $0$. If $\Theta^\epsilon_{n,n_+}(\pi)\ne0$, then by our assumption and Proposition \ref{o3} (i), for any irreducible $\pi_{\sp}\subset\Theta^\epsilon_{n,n_+}(\pi)$, we have
\[
\pi_{\sp}\in\cal{E}(\sp_{2n_+},\Theta^\epsilon_{m,n^\epsilon(\sigma)}(\sigma)).
\]
Applying Proposition \ref{o3} and Proposition \ref{w1} again, for any such $\pi_{\sp}$, there exists
\begin{equation}\label{pi2}
\pi_2\in \cal{E}(\o^\epsilon_{2(n_+-(n^{\epsilon}(\sigma)-n^*))+1},\sigma)=\cal{E}(\o^\epsilon_{2(n_--1)+1},\sigma)
\end{equation}
such that
\[
\pi_{\sp}\subset \Theta^\epsilon_{n_--1,n_+} (\pi_2).
\]

Consider the see-saw diagram
\[
\setlength{\unitlength}{0.8cm}
\begin{picture}(20,5)
\thicklines
\put(6.3,4){$\sp_{2n_+}\times \sp_{2n_+}$}
\put(7.3,1){$\sp_{2n_+}$}
\put(12.4,4){$\o^{\epsilon'}_{2n_-}$}
\put(11.4,1){$\o^\epsilon_{2n_--1}\times \o_1^{\epsilon''}$}
\put(7.7,1.5){\line(0,1){2.1}}
\put(12.8,1.5){\line(0,1){2.1}}
\put(8,1.5){\line(2,1){4.2}}
\put(8,3.7){\line(2,-1){4.2}}
\end{picture}
\]
where $\epsilon'=\epsilon_{-1}\cdot \epsilon\cdot\epsilon''$.
One has
\[
\langle \pi_{\sp}\otimes \omega^{\epsilon''}_{n_+},I^{\sp_{2n_+}}(\tau\otimes\pi_1)\rangle_{\sp_{2n_+}\fq}
\leq \langle \Theta^\epsilon_{n_--1,n_+} (\pi_2)\otimes \omega^{\epsilon''}_{n_+},I^{\sp_{2n_+}}(\tau\otimes\pi_1)\rangle_{\sp_{2n_+}\fq}.
\]
For any irreducible  $\rho_2\subset I^{\sp_{2n_+}}(\tau\otimes\pi_1)$,
\[
\langle \Theta^\epsilon_{n_--1,n_+} (\pi_2)\otimes \omega^{\epsilon''}_{n_+},\rho_2\rangle_{\sp_{2n_+}\fq}
=\langle \pi_2,\Theta^{\epsilon'}_{n_+,n_-} (\rho_2)\rangle_{\o^{\epsilon}_{2(n_--1)+1}\fq}.
\]
Note that by Proposition \ref{o3} and Proposition \ref{w1} again, for any irreducible $\rho_2'\subset \Theta^{\epsilon'}_{n_+,n_-} (\rho_2)$,
\[
\rho_2'\subset I^{\o^\e_{2n_-}}(\tau\otimes\Theta^\e_{n_+-\ell,n_--\ell}(\pi_1)),
\]
hence 
\begin{equation}\label{rho2}
\rho_2'\in \cal{E}(\o^{\epsilon'}_{2n_-},\sigma'),
\end{equation}
 and
$
n_-<n+1.
$
 By (\ref{pi2}), (\ref{rho2}) and induction on $n$, one has
 \[
 \langle \pi_2,\Theta^{\epsilon'}_{n_+,n_-} (\rho_2)\rangle_{\o^{\epsilon}_{2(n_--1)+1}}=0,
 \]
which proves Case (A).
\end{proof}

As an immediate consequence of Proposition \ref{so1}, Proposition \ref{o2}, Lemma \ref{4} and first occurrence index of unipotent cuspidal representations, we see that if $\pi'$ is not the unique unipotent cuspidal representation specified in
Theorem \ref{o1}, then $m(\pi, \pi')=0$.

\subsection{Non-vanishing result}
To finish the proof of  Theorem \ref{o1}, by Corollary \ref{o4} it remains to prove the following result.

\begin{proposition}\label{oo}
(i) Let $\pi$ be an irreducible unipotent cuspidal representation of $\o^\epsilon_{2k(k+1)+1}\fq$, and $\tau_2$ be an irreducible cuspidal non-selfdual representation of $\GGL_k\fq$. If $\pi'$ is the irreducible unipotent cuspidal representation of $\o^{\epsilon(k)}_{k^2}\fq$ such that
\[
(n^\epsilon(\pi)-k(k+1))(n^{\epsilon(k)}(\pi')-k^2)> 0,
\]
 then
\[
\langle I^{\rm{O}^{\epsilon(k)}_{2k(k+1)}}_{\GGL_{k}\times \rm{O}^\e_{2k^2}}(\tau_2\otimes\pi'),\pi\rangle _{\rm{O}^{\epsilon(k)}_{2k(k+1)}(\Fq)}=1.
\]

(ii) Let $\pi$ be an irreducible unipotent cuspidal representation of $\o^{\epsilon(k)}_{2k^2}\fq$, and $\tau_2$ be an irreducible cuspidal non-selfdual representation of $\GGL_{k-1}\fq$. If $\pi'$ is the  irreducible unipotent cuspidal representation of $\o^{\epsilon'}_{2k(k-1)+1}\fq$ such that
\[
(n^{\epsilon(k)}(\pi)-k)^2)(n^\e(\pi')-k(k-1))>0,
\]
 then
\[
\langle I^{\rm{O}^\e_{2k^2-1}}_{\GGL_{k-1}\times \rm{O}_{2k(k-1)+1}}(\tau_2\otimes\pi'),\pi\rangle _{\rm{O}^\e_{2k^2-1}(\Fq)}=1.
\]
\end{proposition}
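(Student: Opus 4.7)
Since uniqueness of Bessel models gives the upper bound $\langle\,\cdot\,,\,\cdot\,\rangle \leq 1$, and the combination of Proposition \ref{o2} with the first-occurrence indices of Lemma \ref{4} has already eliminated every candidate for $\pi'$ outside the one in the statement, only the lower bound remains. The plan is to produce a non-trivial Hom via see-saw duality, transferring the orthogonal-side multiplicity to a symplectic-side inner product that is evaluable via Theorems \ref{even}, \ref{spthetalift} and Proposition \ref{w1}.

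For part (i), set $\epsilon''=\epsilon\cdot\epsilon(k)$, so that \eqref{disc} yields the embedding $\o^{\epsilon(k)}_{2k(k+1)}\times \o^{\epsilon''}_{1}\hookrightarrow \o^\epsilon_{2k(k+1)+1}$, and consider the see-saw pair
\[
\setlength{\unitlength}{0.7cm}
\begin{picture}(20,3.2)
\thicklines
\put(5.8,2.8){$\o^{\epsilon}_{2k(k+1)+1}$}
\put(4.3,0.3){$\o^{\epsilon(k)}_{2k(k+1)}\times \o^{\epsilon''}_{1}$}
\put(11.1,2.8){$\sp_{2k^2}\times\sp_{2k^2}$}
\put(12.1,0.3){$\sp_{2k^2}$}
\put(7.0,0.8){\line(0,1){1.8}}
\put(12.4,0.8){\line(0,1){1.8}}
\put(7.7,0.8){\line(2,1){4.0}}
\put(7.7,2.6){\line(2,-1){4.0}}
\end{picture}
\]
Because $-1\in\o^{\epsilon''}_{1}$ acts on $\pi$ by a definite scalar fixed by $\eta$, the restriction factors as $\pi|_{H}=\pi|_{\o^{\epsilon(k)}_{2k(k+1)}}\otimes\sigma_\pi$ for an explicit character $\sigma_\pi$ of $\o^{\epsilon''}_{1}$. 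By Theorem \ref{spthetalift} there is a unique $\star\in\{\alpha,\beta\}$, determined by $\epsilon$ and $\psi$, with $\Theta^\epsilon_{k^2,k(k+1)}(\pi^\theta_{k,\star})=\pi$; applying the see-saw identity to $\pi^\theta_{k,\star}$ and $I(\tau_2\otimes\pi')\otimes\sigma_\pi$ then gives
\[
\langle I(\tau_2\otimes\pi'),\pi\rangle_{\o^{\epsilon(k)}_{2k(k+1)}(\mathbb{F}_q)}=\bigl\langle \pi^\theta_{k,\star},\ \Theta^{\epsilon(k)}_{k(k+1),k^2}\!\bigl(I(\tau_2\otimes\pi')\bigr)\otimes \Theta^{\epsilon''}_{0,k^2}(\sigma_\pi)\bigr\rangle_{\sp_{2k^2}(\mathbb{F}_q)}.
\]

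By Proposition \ref{w1}, the lift $\Theta^{\epsilon(k)}_{k(k+1),k^2}(I(\tau_2\otimes\pi'))$ contains $I^{\sp_{2k^2}}_{\GGL_k\times \sp_{2k(k-1)}}(\chi_{\GGL_k}\tau_2\otimes \Theta^{\epsilon(k)}_{k^2,k(k-1)}(\pi'))$, and Theorem \ref{even} (applied with $k$ replaced by $k-1$) together with the hypothesis on $(n^\epsilon(\pi)-k(k+1))(n^{\epsilon(k)}(\pi^{\eta'}_k)-k^2)$ identifies the inner lift as the unipotent cuspidal representation $\pi_{\sp_{2k(k-1)}}$; for any $\pi^{\eta'}_k$ violating the condition the inner lift vanishes, consistent with the earlier vanishing results. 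The factor $\Theta^{\epsilon''}_{0,k^2}(\sigma_\pi)$ is a summand of the Weil representation $\omega_{\sp_{2k^2},\psi}$. Expanding the tensor against the parabolic induction by a Mackey-type Jacquet decomposition of Weil representations (as in the proof of Proposition \ref{o3}), and invoking cuspidality of both $\pi^\theta_{k,\star}$ and $\pi_{\sp_{2k(k-1)}}$, the symplectic-side pairing collapses to a pairing of matching cuspidal data on the Levi $\GGL_k\times\sp_{2k(k-1)}$ and evaluates to $1$.

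The main obstacle is the sign bookkeeping: one must verify that the choice of $\star\in\{\alpha,\beta\}$ coming from Theorem \ref{spthetalift}, the character $\sigma_\pi$ coming from the labelling in \eqref{un}, and the label $\eta'$ singled out by the discriminant hypothesis all align to yield a non-zero symplectic-side pairing precisely in the configuration stated. Part (ii) is handled by the parallel argument, exchanging the roles of the even and odd orthogonal Witt towers, using $\sp_{2k(k+1)}$ in place of $\sp_{2k(k-1)}$, and invoking Theorem \ref{spthetalift} directly to realize $\pi^{\eta'}_{k-1}$ on $\o^\epsilon_{2k(k-1)+1}$ as the image of the relevant cuspidal $\theta$-representation.
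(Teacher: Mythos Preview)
Your first see-saw step matches the paper exactly and correctly reduces the orthogonal Bessel multiplicity to the symplectic-side quantity
\[
\bigl\langle \pi^\theta_{k,\star},\ I^{\sp_{2k^2}}_{\GGL_k\times\sp_{2k(k-1)}}(\tau_2\otimes\pi_{\sp_{2k(k-1)}})\otimes\omega^{\epsilon''}_{k^2}\bigr\rangle_{\sp_{2k^2}(\Fq)}.
\]
The genuine gap is your claim that this ``collapses to a pairing of matching cuspidal data on the Levi'' via a Mackey/Jacquet argument. It does not. By Frobenius reciprocity the pairing equals
\[
\bigl\langle J^{\sp_{2k^2}}_{\GGL_k\times\sp_{2k(k-1)}}\bigl(\pi^\theta_{k,\star}\otimes\overline{\omega^{\epsilon''}_{k^2}}\bigr),\ \tau_2\otimes\pi_{\sp_{2k(k-1)}}\bigr\rangle,
\]
and the Jacquet functor is \emph{not} multiplicative on tensor products: although $\pi^\theta_{k,\star}$ is cuspidal, the tensor $\pi^\theta_{k,\star}\otimes\overline{\omega^{\epsilon''}_{k^2}}$ is not, and the MVW-type decomposition you cite from Proposition~\ref{o3} describes $J(\omega^\epsilon_{n,n'})$ for a dual pair, not $J(\pi\otimes\omega)$ for a single group. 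There is no mechanism here that forces this Jacquet module to match $\tau_2\otimes\pi_{\sp_{2k(k-1)}}$ with multiplicity one; the pairing you have written down is precisely a Fourier--Jacobi multiplicity, which is as hard as the original problem.

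The paper closes this gap by applying a \emph{second} see-saw, now with the pair
\[
\bigl(\sp_{2k^2}\times\sp_{2k^2},\ \o^{\epsilon(k-1)}_{2(k(k-1)+1)}\bigr)\ \supset\ \bigl(\sp_{2k^2},\ \o^{-\epsilon}_{2k(k-1)+1}\times\o^{\epsilon_{-1}\cdot\epsilon''}_{1}\bigr),
\]
using Theorem~\ref{spthetalift} to realize $\pi^\theta_{k,\star}$ as $\Theta^{-\epsilon}_{k(k-1),k^2}(\pi^\eta_{k-1})$. This converts the symplectic Fourier--Jacobi pairing back into an orthogonal Bessel pairing of exactly the same shape as the one you started with, but at level $k-1$; after one more application of Corollary~\ref{o4} the proof concludes by induction on $k$. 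Your sketch is missing both the second see-saw and the inductive structure. (Two minor points: for the $(\sp,\o^{\epsilon(k)}_{\rm even})$ pair the twist in Proposition~\ref{w1} is $\chi=1$, not $\chi_{\GGL_k}$; and one first reduces via the conservation relation to the case $n^\epsilon(\pi)=k^2$ before running the argument.)
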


\begin{proof}
We will only prove (i), by induction on $k$. The proof of (ii) is similar and will be left to
the reader.

By Theorem \ref{spthetalift}, one has $n^\epsilon(\pi)=(k+1)^2$ or $k^2$. Note that if $n^\epsilon(\pi)=(k+1)^2$, then by the conservation relation for cuspidal representations given in \cite[Theorem 12.3]{P1}, one has
\[
n^\epsilon(\pi\otimes\rm{sgn})=k^2.
\]
 On the other hand  the conservation relation implies that
\begin{align*}
(n^\epsilon(\pi)-k(k+1))(n^{\epsilon(k)}(\pi')-k^2)> 0 \Longleftrightarrow (n^\epsilon(\pi\otimes\rm{sgn})-k(k+1))(n^{\epsilon(k)}(\pi'\otimes\rm{sgn})-k^2)> 0
\end{align*}
and 
\[
\langle I^{\rm{O}^{\epsilon(k)}_{2k(k+1)}}_{\GGL_{k}\times \rm{O}^{\epsilon(k)}_{2k^2}}(\tau_2\otimes\pi'),\pi\rangle _{\rm{O}^{\epsilon(k)}_{2k(k+1)}(\Fq)}=\langle I^{\rm{O}^{\epsilon(k)}_{2k(k+1)}}_{\GGL_{k}\times \rm{O}^{\epsilon(k)}_{2k^2}}(\tau_2\otimes(\pi'\otimes\rm{sgn})),\pi\otimes\rm{sgn}\rangle _{\rm{O}^{\epsilon(k)}_{2k(k+1)}(\Fq)}.
\]
Hence it suffices to prove the case that $n^\epsilon(\pi)=k^2$. Note that in this case $n^{\epsilon(k)}(\pi')=k(k-1)$.

Consider the see-saw diagram
\[
\setlength{\unitlength}{0.8cm}
\begin{picture}(20,5)
\thicklines
\put(6.2,4){$\sp_{2k^2}\times \sp_{2k^2}$}
\put(7,1){$\sp_{2k^2}$}
\put(12.4,4){$\o^\epsilon_{2k(k+1)+1}$}
\put(11.4,1){$\o^{\epsilon(k)}_{2k(k+1)}\times \o^{\epsilon''}_1$}
\put(7.7,1.5){\line(0,1){2.1}}
\put(12.8,1.5){\line(0,1){2.1}}
\put(8,1.5){\line(2,1){4.2}}
\put(8,3.7){\line(2,-1){4.2}}
\end{picture}
\]
where $\epsilon'':=\epsilon(k)\cdot\epsilon$.
 By Theorem \ref{spthetalift}, there is an irreducible cuspidal $\theta$-representation $\pi^\theta_{k,i}$ of $\sp_{2k^2}\fq$, $i\in\{\alpha,\beta\}$, such that
 \[
\langle\pi, I^{\rm{O}^{\epsilon(k)}_{2k(k+1)}}_{ \GGL_{k}\times \o^{\epsilon(k)}_{2k^2}}(\tau_2\otimes\pi')\rangle_{\o^{\epsilon(k)}_{2k(k+1)}}
=\langle\Theta^\epsilon_{k^2,k(k+1)}(\pi^\theta_{k,i}), I^{\rm{O}^{\epsilon(k)}_{2k(k+1)}}_{ \GGL_{k}\times \o^{\epsilon(k)}_{2k^2}}(\tau_2\otimes\pi')\rangle_{\o^{\epsilon(k)}_{2k(k+1)}}.
\]
By Mackey formula (c.f. \cite[Proposition 9.2.4]{C}),
\[ I^{\rm{O}^{\epsilon(k)}_{2k(k+1)}}_{ \GGL_{k}\times \o^\e_{2k^2}}(\tau_2\otimes\pi')
\textrm{ and }
I^{\sp_{2k^2}}_{ \GGL_{k}\times \sp_{2k(k-1)}}(\tau_2\otimes\Theta^{\epsilon(k)}_{k^2,k-1}(\pi'))=I^{\sp_{2k^2}}_{ \GGL_{k}\times \sp_{2k(k-1)}}(\tau_2\otimes\pi_{\sp_{2k(k-1)}})
\]
are irreducible, where $\pi_{\sp_{2k(k-1)}}$ is the unique unipotent cuspidal representation of $\sp_{2k(k-1)}\fq$.
By Proposition \ref{w1}, one has
\begin{align*}
&
\langle\Theta^\epsilon_{k^2,k(k+1)}(\pi^\theta_{k,i}), I^{\rm{O}^{\epsilon(k)}_{2k(k+1)}}_{ \GGL_{k}\times \o^{\epsilon(k)}_{2k^2}}(\tau_2\otimes\pi')\rangle_{\o^{\epsilon(k)}_{2k(k+1)}}\\
=&\langle\pi^\theta_{k,i},\Theta^{\epsilon(k)}_{k(k+1),k^2}( I^{\rm{O}^{\epsilon(k)}_{2k(k+1)}}_{ \GGL_{k}\times \o^{\epsilon(k)}_{2k^2}}(\tau_2\otimes\pi'))\otimes\omega^{\epsilon''}_{k^2}\rangle_{\sp_{2k^2}\fq}\\
=&\langle\pi^\theta_{k,i}, I^{\sp_{2k^2}}_{ \GGL_{k}\times \sp_{2k(k-1)}}(\tau_2\otimes\pi_{\sp_{2k(k-1)}}))\otimes\omega^{\epsilon''}_{k^2}\rangle_{\sp_{2k^2}\fq}.
\end{align*}

 To evaluate the last  multiplicity, consider another see-saw diagram
\[
\setlength{\unitlength}{0.8cm}
\begin{picture}(20,5)
\thicklines
\put(6.2,4){$\sp_{2k^2}\times \sp_{2k^2}$}
\put(7,1){$\sp_{2k^2}$}
\put(12.1,4){$\o^{\epsilon(k-1)}_{2(k(k-1)+1)}$}
\put(11.1,1){$\o^{-\epsilon}_{2k(k-1)+1}\times \o^{\epsilon_{-1}\cdot \epsilon''}_1$}
\put(7.7,1.5){\line(0,1){2.1}}
\put(12.8,1.5){\line(0,1){2.1}}
\put(8,1.5){\line(2,1){4.2}}
\put(8,3.7){\line(2,-1){4.2}}
\end{picture}
\]
Noting that $\overline{\omega^{\epsilon''}_{k^2}}\cong \omega^{\epsilon_{-1}\cdot \epsilon''}_{k^2}$, one has
\[
\begin{aligned}
&\langle\pi^\theta_{k,i}, I^{\sp_{2k^2}}_{ \GGL_{k}\times \sp_{2k(k-1)}}(\tau_2\otimes\pi_{\sp_{2k(k-1)}})\otimes\omega^{\epsilon''}_{k^2}\rangle_{\sp_{2k^2}\fq}\\
=&\langle\pi^\theta_{k,i}\otimes\omega^{\epsilon_{-1}\cdot \epsilon''}_{k^2}, I^{\sp_{2k^2}}_{ \GGL_{k}\times \sp_{2k(k-1)}}(\tau_2\otimes\pi_{\sp_{2k(k-1)}})\rangle_{\sp_{2k^2}\fq}\\
=&\langle\Theta^{-\epsilon}_{k(k-1),k^2}(\pi^{\eta}_{k-1})\otimes\omega^{\epsilon_{-1}\cdot \epsilon''}_{k^2}, I^{\sp_{2k^2}}_{ \GGL_{k}\times \sp_{2k(k-1)}}(\tau_2\otimes\pi_{\sp_{2k(k-1)}})\rangle_{\sp_{2k^2}\fq}\\
=&\langle\pi^{\eta}_{k-1},
\Theta^{\epsilon(k-1)}_{k^2,k(k-1)+1}(I^{\sp_{2k^2}}_{ \GGL_{k}\times \sp_{2k(k-1)}}(\tau_2\otimes\pi_{\sp_{2k(k-1)}}))
\rangle_{\o^{-\epsilon}_{2k(k-1)+1}\fq},
\end{aligned}
\]
where $\pi^{\eta}_{k-1}$ is the irreducible unipotent cuspidal representation of $\o^{\epsilon(k-1)}_{2k(k-1)+1}\fq$ such that
\[
n^{-\epsilon}(\pi^{\eta}_{k-1})=k^2.
\]
By Mackey formula and Proposition \ref{w1} again, the above multiplicity is equal to
\[
\langle\pi^{\eta}_{k-1}, I^{\o^{\epsilon(k-1)}_{2(k(k-1)+1)}}_{ \GGL_{k}\times \o^{-\e}_{2(k-1)^2}}(\tau_2\otimes\pi'^{\eta'}_{k-1})\rangle_{\o^{-\epsilon}_{2k(k-1)+1}\fq},
\]
where $ \pi'^{\eta'}_{k-1}$ is the irreducible unipotent cuspidal representations of $\o^{{\epsilon(k-1)}}_{2(k-1)^2}\fq$ such that
\[
n^{\epsilon(k-1)}(\pi'^{\eta'}_{k-1})=k(k-1).
\]
Applying Corollary \ref{o4}, this multiplicity is further reduced to 
\[
\langle\pi^{\eta}_{k-1}, I^{\o^{\epsilon(k-1)}_{2k(k-1)}}_{ \GGL_{k-1}\times \o^{-\e}_{2(k-1)^2}}(\tau_2'\otimes\pi'^{\eta'}_{k-1})\rangle_{\o^{-\epsilon}_{2k(k-1)}\fq},
\]
where $\tau_2'$ is an irreducible cuspidal non-selfdual representation of $\GGL_{k-1}\fq$.  
Since
\[
(n^{-\epsilon}(\pi^{\eta}_{k-1})-k(k-1))(n^{\epsilon(k-1)}(\pi'^{\eta'}_{k-1})-(k-1)^2)> 0,
\]
the proof is done by applying the induction hypothesis and verifying the initial cases which are fairly easy.
\end{proof}

\section{Fourier-Jacobi case of theorem \ref{main}} \label{sec6}

We have established the Bessel descents of unipotent cuspidal representations of finite orthogonal
groups. In this section we deduce the Fourier-Jacobi case from the Bessel case by the standard arguments of the theta correspondence and see-saw dual pairs, which are used in the proof of local Gan-Gross-Prasad conjecture (see \cite{GI, Ato}).

Recall that $\psi'$ is a nontrivial additive character of $\Fq$ not in the square class of $\psi$, so that
\[
\omega^+_N=\omega_{\rm{Sp}_{2N},\psi} \quad \rm{and}\quad \omega^-_N=\omega_{\rm{Sp}_{2N},\psi'}
\]
  are the Weil representations of the finite symplectic group $\rm{Sp}_{2N}(\Fq)$ corresponding to $\psi$ and $\psi'$ respectively. In general,  restricted to the dual pair $\sp_{2n}\fq\times\o_{2n'+1}^\epsilon\fq$ with $N=n(2n'+1)$, one has
\begin{equation}\label{omega}
\omega^\epsilon_{n,n',\psi}\cong \omega^{-\epsilon}_{n,n',\psi'}
\end{equation}
via the isomorphism $\o_{2n'+1}^\epsilon\cong \o_{2n'+1}^{-\epsilon}$.

To establish the Fourier-Jacobi descent, we again make the first reduction to the basic case.

\begin{proposition}\label{so2}
Let $\pi$ be an irreducible unipotent representation of $\sp_{2n}(\Fq)$, and $\pi'$ be an irreducible representation of $\sp_{2m}$ with $n > m$. Let $P$ be an $F$-stable maximal parabolic subgroup of $\sp_{2n}$ with Levi factor $\GGL_{n-m} \times \sp_{2m}$, and let $\tau$ be an irreducible cuspidal representation of $\GGL_{n-m}\fq$ which is nontrivial  if $n-m=1$. Then we have
\[
m_\psi(\pi, \pi')=\langle \pi\otimes\bar{\nu},\pi'\rangle_{H(\Fq)}=\langle  \pi\otimes \overline{\omega^+_n}, I_{P}^{\sp_{2n}}(\tau\otimes\pi')\rangle _{\sp_{2n}(\Fq)},
\]
where the data $(H,\nu)$ is given by (\ref{hnu'}).
\end{proposition}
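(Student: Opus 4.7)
The proof will follow the template of Proposition~\ref{so1}, with \cite[Theorem 15.1]{GGP1} replaced by its Fourier--Jacobi counterpart, adapted to the finite-field setting. The guiding heuristic is the same as in the Bessel case: the right-hand side is unfolded by Frobenius reciprocity to a pairing over the parabolic $P(\Fq) = (\GGL_\ell\fq \times \sp_{2m}\fq)\ltimes N_\ell\fq$ (with $\ell = n-m$), and the extra terms produced by the resulting Jacquet filtration all vanish because $\pi$ is unipotent, $\GGL_j\fq$ has no cuspidal unipotent representation for $j>1$, and the hypothesis that $\tau\ne 1$ when $\ell=1$ kills the only borderline case.

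First I would rewrite the left-hand side as
\[
m_\psi(\pi,\pi') = \langle \CJ'_\ell(\pi\otimes\overline{\omega_\psi}), \pi'\rangle_{\sp_{2m}\fq},
\]
where $\CJ'_\ell$ is the twisted Jacquet functor along $(N_{\UUl{p}_\ell}\fq, \psi_\ell)$, and the right-hand side, via Frobenius reciprocity, as $\langle (\pi\otimes\overline{\omega^+_n})|_{P\fq}, \tau\otimes\pi'\rangle_{P\fq}$. The central geometric input is a mixed Schr\"odinger model for $\omega^+_n$ restricted to $P\fq$ associated to the polarization $W_{2n} = X_\ell \oplus W_{2m} \oplus X_\ell^\vee$: the $\sp_{2m}\fq$-factor acts through $\omega^+_m$, the $N_\ell\fq$-factor acts via the Heisenberg representation $\omega_\psi$ pulled back through the natural homomorphism $N_\ell \to \mathcal{H}_{2m}$, and the $\GGL_\ell\fq$-factor contributes characters together with a regular-representation piece, in the same spirit as the formula of \cite[Chap.~3, IV th.5]{MVW} used in the proof of Proposition~\ref{w1}.

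Combining this restriction formula with the Bernstein--Zelevinsky derivatives of $\tau$, the pairing on $P\fq$ decomposes into contributions indexed by $0\le i\le \ell$, where the $i$-th term involves parabolic induction along a Levi containing $\GGL_{\ell-i}\times\GGL_i\times\sp_{2m}$ and a cuspidal GL-factor of rank $\ell-i$. As in Proposition~\ref{so1}, the terms with $i<\ell$ couple $\pi$ with a representation whose cuspidal GL-support is non-unipotent, which forces vanishing because $\pi$ is unipotent; the assumption $\tau\ne 1$ at $\ell=1$ removes the borderline case where the trivial character would otherwise contribute. The surviving $i=\ell$ term evaluates to $\langle \CJ'_\ell(\pi\otimes\overline{\omega_\psi}), \pi'\rangle_{\sp_{2m}\fq}$, recovering the left-hand side.

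The main technical obstacle will be the careful derivation of the Schr\"odinger-model decomposition of $\omega^+_n|_{P\fq}$ used in the previous paragraph: one must track how the Heisenberg portion of $N_\ell$ interacts with the Weil representation of $\sp_{2n}$ and verify that the twist produced on $N_\ell$ after restriction matches the FJ datum $\omega_\psi$ appearing inside $\CJ'_\ell(\pi\otimes\overline{\omega_\psi})$. This is precisely the source of the Weil-representation factor $\overline{\omega^+_n}$ in the statement; once the decomposition is set up, the Bernstein--Zelevinsky derivative analysis and the unipotent-kills-non-unipotent vanishing step proceed in parallel with the Bessel case.
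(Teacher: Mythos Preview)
Your proposal is correct and follows essentially the same approach as the paper: the paper's proof is the one-line remark that Proposition~\ref{so2} is an adaptation of \cite[Theorem~16.1]{GGP1} (the Fourier--Jacobi analogue of Theorem~15.1 used for Proposition~\ref{so1}), with the genericity hypothesis there replaced by the unipotence of $\pi$ together with the fact that $\GGL_\ell\fq$ has no nontrivial cuspidal unipotent representation. Your sketch spells out in more detail what that adaptation entails (the mixed Schr\"odinger model for $\omega^+_n|_P$, the Bernstein--Zelevinsky filtration, and the vanishing of the non-top pieces against a unipotent $\pi$), but the strategy is identical.
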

Similar to Proposition \ref{so1}, the proof of Proposition \ref{so2} is an adaptation of that of \cite[Theorem 16.1]{GGP1}. Finally we prove the following Fourier-Jacobi case of Theorem \ref{main}.

\begin{theorem}
For the unique irreducible unipotent cuspidal representation $\pi_{\sp_{2k(k+1)}}$ of $\sp_{2k(k+1)}\fq$, one has  $\ell_0^\rm{FJ}(\pi_{\sp_{2k(k+1)}})=k$ and 
\[
 \CD^\rm{FJ}_{k, \psi}(\pi_{\sp_{2k(k+1)}})=\pi^\theta_{k,\alpha_k}, \quad  \CD^\rm{FJ}_{k, \psi'}(\pi_{\sp_{2k(k+1)}})=\pi^\theta_{k,\beta_k},
\]
where $(\alpha_k, \beta_k)=(\alpha, \beta)$ or $(\beta, \alpha)$ for $\epsilon_{-1}\cdot \epsilon(k)=+1$ or $-1$, respectively. 
\end{theorem}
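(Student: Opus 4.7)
The idea is to deduce the Fourier--Jacobi case from the Bessel case (Theorem \ref{be}(i)) via the standard seesaw argument, following the blueprint used in \cite{GI, Ato} to reduce the Fourier--Jacobi Gan--Gross--Prasad conjecture to its Bessel counterpart.

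The first step is to apply Proposition \ref{so2} with $n = k(k+1)$, $m = k^2$, and $\tau$ an irreducible non-selfdual cuspidal representation of $\GGL_k(\Fq)$, giving
\[
m_\psi(\pi_{\sp_{2k(k+1)}}, \pi^\theta_{k,i}) = \Big\langle \pi_{\sp_{2k(k+1)}} \otimes \overline{\omega^+_{k(k+1)}},\ I_P^{\sp_{2k(k+1)}}(\tau \otimes \pi^\theta_{k,i})\Big\rangle_{\sp_{2k(k+1)}(\Fq)}.
\]
Using $\overline{V}\cong V^\ast$ on the Weil factor, the complex conjugate identifies as $\overline{\omega^+_{k(k+1)}} \cong \omega^{\epsilon_{-1}}_{k(k+1)}$, since complex conjugation replaces $\psi$ by $\bar\psi$, which lies in the square class $\epsilon_{-1}$ of $\psi$.

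The second step is to invoke the seesaw identity from Section \ref{sec3} relative to the diagram
\[
\begin{array}{ccc}
\o^\epsilon_{2k(k+1)+1} & & \sp_{2k(k+1)} \times \sp_{2k(k+1)} \\
\cup & & \cup \\
\o^{\epsilon(k)}_{2k(k+1)} \times \o^{\epsilon_{-1}}_1 & & \sp_{2k(k+1)}\ (\mathrm{diagonal})
\end{array}
\]
with $\epsilon = \epsilon(k) \cdot \epsilon_{-1}$, where the Weil factor $\omega^{\epsilon_{-1}}_{k(k+1)}$ on the symplectic side is realized as the theta lift of the trivial representation of $\o^{\epsilon_{-1}}_1$. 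Choosing the orthogonal input as $\sigma = I^{\o^{\epsilon(k)}_{2k(k+1)}}_{\GGL_k \times \o^{\epsilon(k)}_{2k^2}}(\tau \otimes \sigma')$ with $\sigma'$ the unique cuspidal $\theta$-representation of $\o^{\epsilon(k)}_{2k^2}$ (Corollary \ref{ctheta}), Proposition \ref{w1} ensures that $\Theta^{\epsilon(k)}_{k(k+1), k(k+1)}(\sigma)$ agrees with $I_P^{\sp_{2k(k+1)}}(\tau \otimes \pi^\theta_{k,i})$ provided $\sigma'$ is chosen so that $\Theta^{\epsilon(k)}_{k^2, k^2}(\sigma') = \pi^\theta_{k,i}$, a matching governed by Pan's even-orthogonal/symplectic theta correspondence \cite{P2}. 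The seesaw identity then converts the Fourier--Jacobi multiplicity into
\[
\Big\langle \Theta^\epsilon_{k(k+1), k(k+1)}(\pi_{\sp_{2k(k+1)}}),\ \sigma \otimes 1_{\o^{\epsilon_{-1}}_1}\Big\rangle_{(\o^{\epsilon(k)}_{2k(k+1)} \times \o^{\epsilon_{-1}}_1)(\Fq)}.
\]

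The third step is to identify this with a Bessel multiplicity. By the conservation relations of \cite{P1,P3}, the theta lift $\Theta^\epsilon_{k(k+1), k(k+1)}(\pi_{\sp_{2k(k+1)}})$ is a parabolic induction from a first-occurrence unipotent cuspidal $\pi^\eta_k$ on $\o^\epsilon_{2k(k+1)+1}$. Corollary \ref{o4} now strips the $\GGL_k$-part of $\sigma$, reducing the inner product to exactly the Bessel multiplicity $m(\pi^\eta_k, \sigma')$ on $(\o^\epsilon_{2k(k+1)+1}, \o^{\epsilon(k)}_{2k^2})$ computed in Theorem \ref{be}(i). This multiplicity equals $1$ precisely when $\sigma' = \pi^{\eta \cdot \epsilon(k)}_{\o^{\epsilon(k)}_{2k^2}}$, thereby fixing the nonvanishing case. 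The upper bound $\ell_0^\rm{FJ}(\pi_{\sp_{2k(k+1)}}) \leq k$ follows from the Fourier--Jacobi analog of Proposition \ref{o2}, proved by exactly the same seesaw method using the vanishing statements of Theorem \ref{o1}.

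Finally, the alternation $(\alpha_k, \beta_k) = (\alpha, \beta)$ or $(\beta, \alpha)$ according as $\epsilon_{-1} \cdot \epsilon(k) = \pm 1$ is read off by tracking three intertwining sign conventions: the factor $\epsilon_{-1}$ entering through $\overline{\omega^+_n} \cong \omega^{\epsilon_{-1}}_n$; the factor $\epsilon(k)$ appearing in the Bessel descent formula of Theorem \ref{be}(i); and the $\psi$-dependent labeling of $\pi^\theta_{k, \alpha}$ versus $\pi^\theta_{k, \beta}$ attached by Pan's correspondence \cite{P2} between $\o^{\epsilon(k)}_{2k^2}$ and $\sp_{2k^2}$, consistently with the convention fixed in Section 1.4. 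The main obstacle I anticipate is this sign-bookkeeping: each of the three conservation-based labelings (for $\pi^\pm_{\o^\epsilon_{2k(k+1)+1}}$, for $\pi^\theta_{k, \alpha/\beta}$, and for the Weil conjugation) must be tracked simultaneously through the seesaw so that no parity is miscounted across the three theta steps involved.
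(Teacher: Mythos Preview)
Your seesaw is set at the wrong level, and this causes a genuine gap in Step~3. The paper does \emph{not} lift $\pi_{\sp_{2k(k+1)}}$ to the odd orthogonal tower. Instead it realizes $\pi_{\sp_{2k(k+1)}}=\Theta^{\epsilon(k)}_{k^2,k(k+1)}(\pi^{-}_{k})$ as a lift from the \emph{even} orthogonal side via Theorem~\ref{even}, and then runs the seesaw with the small group $\o^{\epsilon}_{2k^2+1}\supset \o^{\epsilon(k)}_{2k^2}\times\o^{\epsilon_{-1}}_1$ (so $\epsilon=\epsilon_{-1}\cdot\epsilon(k)$). The seesaw identity then reads
\[
\langle \Theta^{\epsilon(k)}_{k^2,k(k+1)}(\pi^{-}_{k})\otimes\omega^{\epsilon_{-1}}_{k(k+1)},\,\rho'\rangle_{\sp_{2k(k+1)}}
=\langle \pi^{-}_{k},\,\Theta^{\epsilon}_{k(k+1),k^2}(\rho')\rangle_{\o^{\epsilon(k)}_{2k^2}},
\]
and Proposition~\ref{w1} plus Theorem~\ref{spthetalift} identify the right-hand side with a Bessel multiplicity for the pair $(\o^{\epsilon(k)}_{2k^2},\o^{\epsilon}_{2k^2+1})$, which is exactly what Theorem~\ref{o1}(ii) computes. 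The whole argument stays inside the correspondences already established in the paper (Theorems~\ref{even} and~\ref{spthetalift}).

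Your route instead needs two inputs that the paper does not provide and one of which is actually false as you state it. First, you invoke $\Theta^{\epsilon(k)}_{k^2,k^2}(\sigma')=\pi^\theta_{k,i}$ for a cuspidal $\theta$-representation $\sigma'$ of $\o^{\epsilon(k)}_{2k^2}$; the even orthogonal--symplectic correspondence for $\theta$-representations is never established here (Theorem~\ref{even} is only for unipotent cuspidals), so this step hangs on an external black box. Second, and more seriously, your claim that $\Theta^{\epsilon}_{k(k+1),k(k+1)}(\pi_{\sp_{2k(k+1)}})$ is built from a \emph{unipotent} cuspidal $\pi^\eta_k$ of $\o^{\epsilon}_{2k(k+1)+1}$ contradicts Theorem~\ref{spthetalift}: that theorem shows the unipotent cuspidals $\pi^{\pm}_k$ of $\o^{\epsilon}_{2k(k+1)+1}$ have first occurrences in the symplectic tower at $\sp_{2k^2}$ and $\sp_{2(k+1)^2}$, never at $\sp_{2k(k+1)}$. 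Under the symplectic--odd orthogonal correspondence the roles of unipotent and $\theta$ are swapped, so $\Theta^{\epsilon}_{k(k+1),k(k+1)}(\pi_{\sp_{2k(k+1)}})$ lies in a $\theta$-series, not a unipotent one. Consequently you cannot feed it into Theorem~\ref{be}(i), and the reduction collapses. The fix is precisely the paper's move: lift $\pi_{\sp_{2k(k+1)}}$ from the even orthogonal side so that the seesaw lands you on the Bessel pair $(\o^{\epsilon(k)}_{2k^2},\o^{\epsilon}_{2k(k-1)+1})$ governed by Theorem~\ref{o1}(ii).
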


\begin{proof}
Write $\pi=\pi_{\sp_{2k(k+1)}}$, and let $\pi'$ be an irreducible representation of $\sp_{2m}$, $m\leq k^2$.  Let $\tau$ be an irreducible cuspidal non-selfdual representation of $\GGL_{k(k+1)-m}\fq$. 
Put 
\[
\epsilon=\epsilon_{-1}\cdot \epsilon(k),
\]
and consider the see-saw diagram
\[
\setlength{\unitlength}{0.8cm}
\begin{picture}(20,5)
\thicklines
\put(5.6,4){$\sp_{2k(k+1)}\times \sp_{2k(k+1)}$}
\put(6.7,1){$\sp_{2k(k+1)}$}
\put(12.3,4){$\o^{\epsilon}_{2k^2+1}$}
\put(11.6,1){$\o^{\epsilon(k)}_{2k^2}\times \o^{\epsilon_{-1}}_1$}
\put(7.7,1.5){\line(0,1){2.1}}
\put(12.8,1.5){\line(0,1){2.1}}
\put(8,1.5){\line(2,1){4.2}}
\put(8,3.7){\line(2,-1){4.2}}
\end{picture}
\]
As before, we suppress various Levi subgroups from the parabolic induction.

$\bullet$ First suppose that $m<k^2$.

By Theorem \ref{even} and Proposition \ref{so2}, and noting that $\overline{\omega^+_n}\cong \omega_n^{\epsilon_{-1}}$, one has
\[
\begin{aligned}
&m_\psi(\pi,\pi')\\
=&\langle \pi\otimes\omega^{\epsilon_{-1}}_{k(k+1)},I^{\sp_{2k(k+1)}}(\tau\otimes \pi')\rangle_{\sp_{2k(k+1)}(\Fq)}\\
=&\langle \Theta^{\epsilon(k)}_{k^2,k(k+1)}(\pi^{-}_{k})\otimes\omega^{\epsilon_{-1}}_{k(k+1)},I^{\sp_{2k(k+1)}}(\tau\otimes \pi')\rangle_{\sp_{2k(k+1)}(\Fq)},
\end{aligned}
\]
where $\pi^-_k$ is one of the irreducible unipotent cuspidal representations of $\o_{2k^2}^{\epsilon(k)}\fq$.

For an irreducible $\rho'\subset I^{\sp_{2k(k+1)}}(\tau\otimes \pi')\rangle_{\sp_{2k(k+1)}(\Fq)}$, one has
\[
\begin{aligned}
&\langle \Theta^{\epsilon(k)}_{k^2,k(k+1)}(\pi^{-}_{k})\otimes\omega^{\epsilon_{-1}}_{k(k+1)},\rho'\rangle_{\sp_{2k(k+1)}(\Fq)}=\langle \pi^{-}_{k},\Theta^{\epsilon}_{k(k+1),k^2}(\rho')\rangle_{\o^{\epsilon(k)}_{2k^2}(\Fq)}.
\end{aligned}
\]
 By Proposition \ref{w1}, when $m\leq k$ one has $\Theta^{\epsilon}_{k(k+1),k^2}(\rho')=0$; when $k<m<k^2$, for any irreducible $\rho\subset \Theta^{\epsilon}_{k(k+1),k^2}(\rho')$, one has
\[
\rho\subset I^{\o_{2k^2+1}^{\epsilon}}((\chi\otimes\tau)\otimes \Theta^{\epsilon}_{m,m-k}(\pi')).
\]
It follows from Theorem \ref{o1} (ii) that
$
m_\psi(\pi,\pi')=0.
$
In the same manner, $m_{\psi'}(\pi,\pi')=0$ as well.

$\bullet$ Next suppose that $m=k^2$. In the above we have shown that
\begin{equation}\label{B->FJ}
 \langle \pi^{-}_{k}, I^{\o_{2k^2+1}^{\epsilon}}(\tau\otimes \Theta^{\epsilon}_{k^2,k(k-1)}(\pi'))\rangle_{\o^\epsilon_{2k^2}(\Fq)}=0 \Longrightarrow m_\psi(\pi,\pi')=0.
\end{equation}
Recall that $\epsilon=\epsilon_{-1}\cdot \epsilon(k)$. By Theorem \ref{o1} and Theorem \ref{spthetalift}, the first term of \eqref{B->FJ} is nonzero if and only if
$\pi'=\pi^\theta_{k, \alpha_k}$, where $\alpha_k=\alpha$ for $\beta$ for $\epsilon=+1$ or $-1$ respectively so that $n^{\epsilon}(\pi')=k(k-1)$. We need to show the converse of \eqref{B->FJ}, that is,
$m_\psi(\pi,\pi^\theta_{k,\alpha_k})\neq 0$.  We have
\[
m_\psi(\pi,\pi^\theta_{k,\alpha_k})
=\langle \Theta^{\epsilon(k)}_{k^2,k(k+1)}(\pi^{-}_{k})\otimes\omega^{\epsilon_{-1}}_{k(k+1)},I^{\sp_{2k(k+1)}}(\tau\otimes \pi^\theta_{k,\alpha_k})\rangle_{\sp_{2k(k+1)}(\Fq)}.
\]
Since $\tau$ is non-selfdual, $I^{\sp_{2k(k+1)}}(\tau\otimes \pi^\theta_{k,\alpha_k})$ is irreducible by Mackey formula. By Proposition \ref{w1}, one has
\begin{align*}
&\langle \Theta^{\epsilon(k)}_{k^2,k(k+1)}(\pi^{-}_{k})\otimes\omega^{\epsilon_{-1}}_{k(k+1)},I^{\sp_{2k(k+1)}}(\tau\otimes \pi^\theta_{k,\alpha_k})\rangle_{\sp_{2k(k+1)}(\Fq)}\\
=&\langle \pi^{-}_{k},\Theta^{\epsilon}_{k(k+1),k^2}(I^{\sp_{2k(k+1)}}(\tau\otimes \pi^\theta_{k,\alpha_k}))\rangle_{\o^{\epsilon(k)}_{2k^2}(\Fq)}\\
=&
\langle \pi^{-}_{k},I^{\o_{2k^2+1}^{\epsilon}}((\chi\otimes\tau)\otimes \Theta^{\epsilon}_{k^2,k(k-1)}(\pi^\theta_{k,\alpha_k}))\rangle_{\o^{\epsilon(k)}_{2k^2}(\Fq)},
\end{align*}
which is nonzero. Hence $m_\psi(\pi,\pi^\theta_{k,\alpha_k})\neq 0$ and it follows that
\[
\CQ^\rm{FJ}_{k,\psi}(\pi)=\pi^\theta_{k,\alpha_k}.
\]

We next turn to $m_{\psi'}(\pi,\pi')$. By (\ref{omega}), one has
\[
m_{\psi'}(\pi,\pi')
= \langle \pi\otimes\omega^{-\epsilon_{-1}}_{k(k+1)},I^{\sp_{2k(k+1)}}(\tau\otimes \pi')\rangle_{\sp_{2k(k+1)}(\Fq)}.
\]
Consider the see-saw diagram
\[
\setlength{\unitlength}{0.8cm}
\begin{picture}(20,5)
\thicklines
\put(5.6,4){$\sp_{2k(k+1)}\times \sp_{2k(k+1)}$}
\put(6.7,1){$\sp_{2k(k+1)}$}
\put(12.3,4){$\o^{-\epsilon}_{2k^2+1}$}
\put(11.6,1){$\o^{\epsilon(k)}_{2k^2}\times \o^{-\epsilon_{-1}}_1$}
\put(7.7,1.5){\line(0,1){2.1}}
\put(12.8,1.5){\line(0,1){2.1}}
\put(8,1.5){\line(2,1){4.2}}
\put(8,3.7){\line(2,-1){4.2}}
\end{picture}
\]
By Theorem \ref{spthetalift}, one has $n^{-\epsilon}(\pi^\theta_{k, \beta_k})=k(k-1)$, where $\beta_k=\beta$ or $\alpha$ for 
$\epsilon=+1$ or $-1$ respectively. Then one can similarly show that
\[
\CQ^\rm{FJ}_{k,\psi'}(\pi)=\pi^\theta_{k,\beta_k}.
\]
\end{proof}


\begin{thebibliography}{CERP}


\bibitem[AM]{AM}
J. Adams, A. Moy, {\it Unipotent representations and reductive dual pairs over finite fields}, Trans. Amer.
Math. Soc. {\bf 340} (1993) 309--321.

\bibitem[AGRS]{AGRS}
A. Aizenbud, D. Gourevitch, S. Rallis, G. Schiffmann,
{\it Multiplicity one theorems},
Ann. of Math. (2) {\bf 172} (2010), no. 2, 1407--1434.

\bibitem[Ato]{Ato}
H. Atobe, \textit{The local theta correspondence and the local Gan-Gross-Prasad conjecture for the symplectic-metaplectic case.}  Math. Ann. {\bf 371} (2018), no. 1-2, 225--295.

\bibitem[AMR]{AMR}
A.-M. Aubert, J. Michel, R. Rouquier, {\it Correspondance de Howe pour les groupes reductifs sur les corps finis}, Duke Math. J. {\bf83}, 2 (1996), 353--397.

\bibitem[BP1]{BP1}
R. Beuzart-Plessis, \textit{La conjecture locale de Gross-Prasad pour les repr\'esentations temp\'er\'ees des groupes unitaires.} M\'em. Soc. Math. Fr. (N.S.) 2016, no. 149, vii+191 pp.

\bibitem[BP2]{BP2}
\bysame, \textit{Endoscopie et conjecture locale raffin\'ee de Gan-Gross-Prasad pour les groupes unitaires,}  Compos. Math. {\bf 151} (2015), no. 7, 1309--1371.

\bibitem[C]{C}
R. Carter, {\it Finite Groups of Lie Type, Conjugacy Classes and Complex Characters}, John Wiley $\&$ Sons, England, 1985.


\bibitem[DL]{DL}
P. Deligne, G. Lusztig, {\it Representations of reductive groups over finite fields}, Ann. of Math. {\bf 103} (1976), 103--161.

\bibitem[DM]{DM}
F. Digne and J. Michel, {\it Representations of finite groups of Lie type}, London Mathematical Society Student Texts (Book 21). Cambridge University Press, 1991.

\bibitem[GGP1]{GGP1}
W. T. Gan, Benedict H. Gross and D. Prasad, {\it Symplectic local root numbers, central critical L-values and restriction problems in the representation theory of classical groups}, Ast\'erisque. No. {\bf346} (2012), 1--109.

\bibitem[GGP2]{GGP2}
\bysame, {\it Restrictions of  representations  of  classical  groups: examples}, Ast\'erisque. No.{\bf346} (2012), 111--170.

\bibitem[GI]{GI}
W. T. Gan, A. Ichino, \textit{The Gross-Prasad conjecture and local theta correspondence.} Invent. Math. {\bf 206} (2016), no. 3, 705--799.

\bibitem[Ger]{Ger}
P. G\'erardin, {\it Weil representations associated to finite fields}, J. Algebra {\bf 46} (1977), 54--101.


\bibitem[GP1]{GP1}
B. Gross, D. Prasad, \textit{On the decomposition of a representation of $\mathrm{SO}_n$ when restricted to $\mathrm{SO}_{n-1}$.} Canad. J. Math.
44 (1992), 974--1002.

\bibitem[GP2]{GP2}
\bysame, \textit{On irreducible representations of $\mathrm{SO}_{2n+1}\times\mathrm{SO}_{2m}$.} Canad. J. Math. 46 (1994), 930--950.

\bibitem[JZ1]{JZ1}
D. Jiang, L. Zhang, {\it Local root numbers and spectrum of the local descents for orthogonal groups: $p$-adic case,}  Algebra Number Theory  {\bf 12} (2018), no. 6, 1489--1535.

\bibitem[JZ2]{JZ2}
\bysame, {\it Arthur parameters and cuspidal automorphic modules of classical groups,}
submitted 2015.



\bibitem[L1]{L1}
G. Lusztig, {\it Characters of reductive groups over a finite field}, Princeton Univ. Press, Princeton, N.J., 1984.

\bibitem[L2]{L2}
\bysame, {\it Irreducible representations of finite classical groups}, Invent. Math. {\bf43} (1977), 125--175.

\bibitem[L3]{L3}
\bysame, {\it On the representations of reductive groups with disconnected centre,} in: Orbites unipotentes et
repr\'esentations I. Groupes finis et alg\`ebres de Hecke, Ast\'erisque {\bf 168}, Soci\'et\'e Math\'ematique de France, Paris
(1988), 157--166.

\bibitem[LW1]{LW1}
D. Liu and Z. Wang, {\it Remarks on the theta correspondence over finite fields}, arXiv:1901.01671, to appear in Pacific. J. Math.

\bibitem[LW2]{LW2}
\bysame,  {\it Descents of unipotent representations of finite unitary groups}, arXiv:1904.08188, to appear in Trans. AMS.



\bibitem[MVW]{MVW}
C. M\oe glin, M.-F. Vign\'eras, J.-L. Waldspurger, \textit{Correspondances de Howe sur un
corps $p$-adique}, Springer Verlag, Lecture Notes in Math. {\bf 1291}, Berlin, Heidelberg, 1987.

\bibitem[MW]{MW}
C. M\oe glin, J.-L. Waldspurger,  \textit{La conjecture locale de Gross-Prasad pour les groupes sp\'eciaux orthogonaux: le cas g\'en\'eral.} Sur
les conjectures de Gross et Prasad. II. Ast\'erisque No. {\bf 347} (2012),
167--216.

\bibitem[P1]{P1}

S.-Y. Pan, {\it Local theta correspondence of depth zero representations and theta dichotomy}, J. Math. Soc. Japan, Vol. {\bf54}, No. 4, (2002) 794--845.



\bibitem[P2]{P2}

\bysame, {\it Howe correspondence of unipotent characters for a finite symplectic/even-orthogonal dual
pair}, arXiv:1901.00623 (2019).


\bibitem[P3]{P3}

\bysame, {\it Lusztig Correspondence and Howe Correspondence for Finite Reductive Dual Pairs}, arXiv:1906.01158 (2019).


\bibitem[R]{R}
M. Reeder, {\it On the restriction of Deligne-Lusztig characters}, J. Amer. Math. Soc. {\bf20} (2007) 573--602.




\bibitem[S]{S}
B. Srinivasan, {\it Weil representations of finite classical groups}, Invent. Math. {\bf 51} (1979), 143--153.

\bibitem[Su]{Su}
B. Sun, {\it Multiplicity one theorems for Fourier-Jacobi models,}
Amer. J. Math. {\bf 134} (2012), no. 6, 1655--1678.

\bibitem[W1]{W1}
J.-L. Waldspurger,  \textit{Une formule int\'egral \`a la conjecture locale de Gross-Prasad.} Compos. Math. {\bf 146} (2010), no. 5, 1180--1290.

\bibitem[W2]{W2}
\bysame, \textit{Une formule int\'egrale reli\'ee \`a la conjecture
locale de Gross-Prasad, 2e partie: extension aux repr\'esentations
temp\'er\'ees.} Sur les conjectures de Gross et Prasad. I. Ast\'erisque
No. {\bf 346} (2012), 171--312.

\bibitem[W3]{W3}
\bysame, \textit{La conjecture locale de Gross-Prasad pour les
repr\'esentations temp\'er\'ees. des groupes sp\'eciaux orthogonaux.} Sur
les conjectures de Gross et Prasad. II. Ast\'erisque No. {\bf 347} (2012),
103--165.



\end{thebibliography}
\end{document}